\documentclass[11pt]{amsart}
\usepackage{amsmath,amssymb,amsthm,array,color,multirow,pdflscape,graphicx,pigpen,stmaryrd,comment}
\usepackage[all]{xypic}
\usepackage[normalem]{ulem}

\usepackage[all,arc,2cell]{xy}
\UseAllTwocells

\usepackage{lmodern}

\linespread{1.1}

\usepackage[margin=1.5in]{geometry}

\newlength{\storeparskip}
\setlength{\storeparskip}{\parskip}

\setlength{\parskip}{.8em}

\usepackage{xcolor}
\definecolor{darkgreen}{rgb}{0,0.30,0} 
\definecolor{darkred}{rgb}{0.75,0,0}
\definecolor{darkblue}{rgb}{0,0,0.6} 
\usepackage[pdfborder=0,pagebackref,colorlinks,citecolor=darkgreen,linkcolor=darkgreen,urlcolor=darkblue]{hyperref}
\renewcommand*{\backref}[1]{}
\renewcommand*{\backrefalt}[4]{({%
    \ifcase #1 Not cited.%
          \or On p.~#2%
          \else On pp.~#2%
    \fi%
    })}

%
%
%
\def\makeautorefname#1#2{\expandafter\def\csname#1autorefname\endcsname{#2}}
%
%
\makeautorefname{equation}{Equation}%
\makeautorefname{footnote}{footnote}%
\makeautorefname{item}{item}%
\makeautorefname{figure}{Figure}%
\makeautorefname{table}{Table}%
\makeautorefname{part}{Part}%
\makeautorefname{appendix}{Appendix}%
\makeautorefname{chapter}{Chapter}%
\makeautorefname{section}{Section}%
\makeautorefname{subsection}{Section}%
\makeautorefname{subsubsection}{Section}%
\makeautorefname{paragraph}{Paragraph}%
\makeautorefname{subparagraph}{Paragraph}%
\makeautorefname{theorem}{Theorem}%
\makeautorefname{thm}{Theorem}%
\makeautorefname{addm}{Addendum}%
\makeautorefname{mainthm}{Main theorem}%
\makeautorefname{corollary}{Corollary}%
\makeautorefname{cor}{Corollary}%
\makeautorefname{lemma}{Lemma}%
\makeautorefname{lem}{Lemma}%
\makeautorefname{sublemma}{Sublemma}%
\makeautorefname{sublem}{Sublemma}%
\makeautorefname{subl}{Sublemma}%
\makeautorefname{prop}{Proposition}%
\makeautorefname{property}{Property}
\makeautorefname{pro}{Property}
\makeautorefname{sch}{Scholium}%
\makeautorefname{step}{Step}%
\makeautorefname{conject}{Conjecture}%
\makeautorefname{conj}{Conjecture}%
\makeautorefname{questn}{Question}
\makeautorefname{quest}{Question}
\makeautorefname{qn}{Question}
\makeautorefname{definition}{Definition}%
\makeautorefname{defn}{Definition}%
\makeautorefname{defi}{Definition}%
\makeautorefname{def}{Definition}%
\makeautorefname{dfn}{Definition}%
\makeautorefname{notation}{Notation}
\makeautorefname{notn}{Notation}
\makeautorefname{rem}{Remark}%
\makeautorefname{rems}{Remarks}%
\makeautorefname{rmk}{Remark}%
\makeautorefname{remark}{Remark}%
\makeautorefname{rk}{Remark}%
\makeautorefname{remarks}{Remarks}%
\makeautorefname{rems}{Remarks}%
\makeautorefname{rmks}{Remarks}%
\makeautorefname{rks}{Remarks}%
\makeautorefname{example}{Example}%
\makeautorefname{examp}{Example}%
\makeautorefname{exmp}{Example}%
\makeautorefname{exam}{Example}%
\makeautorefname{exa}{Example}%
\makeautorefname{axiom}{Axiom}%
\makeautorefname{axi}{Axiom}%
\makeautorefname{ax}{Axiom}%
\makeautorefname{case}{Case}%
\makeautorefname{claim}{Claim}%
\makeautorefname{clm}{Claim}%
\makeautorefname{assumpt}{Assumption}%
\makeautorefname{asses}{Assumptions}%
\makeautorefname{conclusion}{Conclusion}%
\makeautorefname{concl}{Conclusion}%
\makeautorefname{conc}{Conclusion}%
\makeautorefname{cond}{Condition}%
\makeautorefname{const}{Construction}%
\makeautorefname{con}{Construction}%
\makeautorefname{criterion}{Criterion}%
\makeautorefname{criter}{Criterion}%
\makeautorefname{crit}{Criterion}%
\makeautorefname{exercise}{Exercise}%
\makeautorefname{exer}{Exercise}%
\makeautorefname{exe}{Exercise}%
\makeautorefname{problem}{Problem}%
\makeautorefname{problm}{Problem}%
\makeautorefname{prob}{Problem}%
\makeautorefname{prob}{Problem}%
\makeautorefname{soln}{Solution}%
\makeautorefname{sol}{Solution}%
\makeautorefname{sum}{Summary}%
\makeautorefname{oper}{Operation}%
\makeautorefname{obs}{Observation}%
\makeautorefname{ob}{Observation}%
\makeautorefname{conv}{Convention}%
\makeautorefname{cvn}{Convention}%
\makeautorefname{warn}{Warning}%
\makeautorefname{note}{Note}%
\makeautorefname{fact}{Fact}%
\makeautorefname{ouch0}{Counterexample}%
%

\newtheorem{thm}{Theorem}[section]
\newtheorem{cor}{Corollary}[section]
\newtheorem{prop}{Proposition}[section]
\newtheorem{lem}{Lemma}[section]

\theoremstyle{definition}

\newtheorem{df}{Definition}[section]

\newtheorem{obs}{Observation}[section]
\newtheorem{rem}{Remark}[section]
\newtheorem{remark}{Remark}[section]

\makeatletter
\let\c@cor=\c@thm
\let\c@prop=\c@thm
\let\c@lem=\c@thm
\let\c@conj=\c@thm
\let\c@defn=\c@thm
\let\c@df=\c@thm
\let\c@exmp=\c@thm
\let\c@rem=\c@thm
\let\c@sch=\c@thm
\let\c@equation\c@thm
\makeatother


\newcommand{\cat}[1]{\textup{\textbf{{#1}}}}

\newcommand{\End}{\textup{End}}

\newcommand{\id}{\textup{id}}

\newcommand{\ra}{\longrightarrow}

\newcommand{\sma}{\wedge}
\newcommand{\barsmash}{\,\overline\wedge\,}
\newcommand{\ti}{\mathcal{E}}

\newcommand{\congar}{\overset\cong\longrightarrow}

\newcommand{\mc}{\mathcal}

\newcommand{\xra}{\xrightarrow}

\newcommand{\uda}{\rotatebox[origin=c]{180}{\textbf{A}}}

\newcommand{\sC}{\mathcal{C}}
\newcommand{\sD}{\mathcal{D}}
\newcommand{\sA}{\mathcal{A}}
\newcommand{\sB}{\mathcal{B}}
\newcommand{\sO}{\mathcal{O}}

\newcommand{\GB}{\mathcal{GB}}

\newcommand{\bK}{\textbf{K}}
\newcommand{\bA}{\textbf{A}}

\setlength{\extrarowheight}{3pt}

\newcommand{\tG}{\mathcal{E}G}
\newcommand{\tH}{\mathcal{E}H}

\newcommand{\po}{\ar@{}[dr]|{\text{\pigpenfont R}}}
\newcommand{\pb}{\ar@{}[dr]|{\text{\pigpenfont J}}}

\DeclareMathOperator{\Cat}{\mathrm{Cat}}


\newcommand{\sbt}{\,\begin{picture}(-1,1)(0.5,-1)\circle*{1.8}\end{picture}\hspace{.05cm}}

\title{Equivariant $A$-theory}
\author{Cary Malkiewich}
\address{Department of Mathematics, Binghamton University}
\email{malkiewich@math.binghamton.edu}
\author{Mona Merling}
\address{Department of Mathematics, The University of Pennsylvania}
\email{mmerling@math.upenn.edu}

\begin{document}

\begin{abstract}
We give a new construction of the equivariant $K$-theory of group actions (cf. Barwick et al.), producing an infinite loop $G$-space for each Waldhausen category with $G$-action, for a finite group $G$. On the category $R(X)$ of retractive spaces over a $G$-space $X$, this produces an equivariant lift of Waldhausen's functor $\bA(X)$, and we show that the $H$-fixed points are the bivariant $A$-theory of the fibration $X_{hH}\to BH$. We then use the framework of spectral Mackey functors to produce a second equivariant refinement $\bA_G(X)$ whose fixed points have tom Dieck type splittings. We expect this second definition to be suitable for an equivariant generalization of the parametrized $h$-cobordism theorem.

\end{abstract}

\maketitle

\begingroup%
\setlength{\parskip}{\storeparskip}
\tableofcontents
\endgroup%

\section{Introduction}

Waldhausen's celebrated $\bA(X)$ construction, and the ``parametrized $h$-cobordism'' theorem relating it to the space of $h$-cobordisms $\mathcal H^\infty(X)$ on $X$, provides a critical link in the chain of homotopy-theoretic constructions relating the behavior of compact manifolds to that of their underlying homotopy types \cite{waldhausen1978alg} \cite{waldhausennew}. While the $L$-theory assembly map provides the primary invariant that distinguishes the closed manifolds in a given homotopy type, $\bA(X)$ provides the secondary information that accesses the diffeomorphism and homeomorphism groups in a stable range \cite{ww}. And in the case of compact manifolds up to stabilization, $\bA(X)$ accounts for the entire difference between the manifold and its underlying homotopy type with tangent information \cite{dww}. As a consequence, calculations of $\bA(X)$ have immediate consequences for the automorphism groups of high-dimensional closed manifolds, and of compact manifolds up to stabilization.

When the manifolds in question have an action by a group $G$, there is a similar line of attack for understanding the equivariant homeomorphisms and diffeomorphisms. One expects to replace $\mathcal H^\infty(X)$ with an appropriate space $\mathcal H^\infty(X)^G$ of $G$-isovariant $h$-cobordisms on $X$, stabilized with respect to representations of $G$. The connected components of such a space would be expected to coincide with the equivariant Whitehead group of \cite{luck}, which splits as
\begin{equation}\label{eq:pi0_splitting}
Wh^G(X) \cong \bigoplus_{(H) \leq G} Wh(X^H_{hWH})
\end{equation}
where $(H) \leq G$ denotes conjugacy classes of subgroups. This splitting is reminiscent of the tom Dieck splitting for genuine $G$-suspension spectra
\[ (\Sigma^\infty_G X_+)^G \cong \bigvee_{(H) \leq G} \Sigma^\infty_+ X^H_{hWH} \]
and suggests that the variant of $A$-theory most directly applicable to manifolds will in fact be a genuine $G$-spectrum, whose fixed points have a similar splitting.

In this paper we begin to realize this conjectural framework. We define an equivariant generalization $\bA_G(X)$ of Waldhausen's $A$-theory functor, when $X$ is a space with an action by a finite group $G$, whose fixed points have the desired tom Dieck style splitting.
\begin{thm}[\autoref{agx_exists2}]\label{thm:agx_exists}
For $G$ a finite group, there exists a functor $\bA_G$ from $G$-spaces to genuine $G$-spectra with fixed points
\[ \bA_G(X)^G \simeq \prod_{(H) \leq G} \bA(X^H_{hWH}), \]
and a similar formula for the fixed points of each subgroup $H$.
\end{thm}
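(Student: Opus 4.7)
The plan is to assemble the ordinary $A$-theory spectra of the Borel constructions $X^H_{hW_GH}$ into a single genuine $G$-spectrum using the spectral Mackey functor framework of Guillou--May and Barwick, which identifies genuine $G$-spectra with spectrum-valued Mackey functors on the effective Burnside category $\sA^{\textup{eff}}(G)$.

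\emph{Step 1 (Coefficient system).} For each subgroup $H \leq G$, write $W_GH = N_GH/H$, and form the Borel construction $X^H_{hW_GH}$. The assignment $G/H \mapsto \bA(X^H_{hW_GH})$ should be naturally contravariantly functorial in maps of orbits, yielding a functor $F_X \colon \mc{O}_G^{\op} \to \textup{Sp}$ from the orbit category to spectra. This step uses only the functoriality of $\bA$ together with the standard contravariant behavior of the fixed-point construction under maps of orbits.

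\emph{Step 2 (Mackey extension).} I would promote $F_X$ to a spectral Mackey functor $\uline{\bA}_G(X) \colon \sA^{\textup{eff}}(G) \to \textup{Sp}$. The essential content is producing transfer maps along finite equivariant covers: for $K \leq H$, a transfer $\bA(X^H_{hW_HH}) \to \bA(X^K_{hW_HK})$ arising from the finite covering $X^K_{hW_HK} \to X^H_{hW_HH}$. The natural candidates are Becker--Gottlieb-type transfers in $A$-theory, which are most cleanly constructed at the categorical level by summing retractive spaces along the fibers of the cover; such a construction can be formulated inside the Waldhausen categories $R(X^H_{hW_GH})$ that underlie the equivariant refinement already appearing earlier in the paper. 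Applying the Guillou--May/Barwick equivalence to this data produces the desired genuine $G$-spectrum, which I take to be $\bA_G(X)$.

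\emph{Step 3 (Splitting).} Under the spectral Mackey functor equivalence, the $H$-fixed points $\bA_G(X)^H$ are identified with the value of the underlying Mackey functor at $G/H$. A Mackey functor built in this way, by freely adjoining transfers to a coefficient system on the orbit category, admits a tom Dieck-style splitting of its value at $G/H$ as a product indexed by $H$-conjugacy classes of subgroups $K \leq H$, with $K$-summand $\bA(X^K_{hW_HK})$. This yields the advertised fixed-point formula for $H = G$ together with the analogous formula for each subgroup $H$, recovering the splitting (\ref{eq:pi0_splitting}) on $\pi_0$.

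The main obstacle is the construction of sufficiently coherent transfer maps. While $A$-theory transfers are well understood at the level of homotopy groups, obtaining an honest spectral Mackey functor, rather than a Mackey functor defined only up to homotopy, requires functorial transfers satisfying the double-coset relations of $\sA^{\textup{eff}}(G)$ coherently at the level of spectra. The most robust resolution is to remain at the categorical level throughout, constructing transfers directly on Waldhausen categories of retractive $G$-spaces along finite covers, and assembling the resulting data into a spectral Mackey functor via a parametrized $S_\bullet$-construction that is compatible with the equivariant refinement of $\bA$ already established in the paper.
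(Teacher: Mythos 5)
Your high-level framework (spectral Mackey functors via Guillou--May, transfers constructed at the level of Waldhausen categories of retractive spaces, parametrized $S_\bullet$) matches the paper's, but your decomposition is run in the wrong direction and this creates a real gap. The paper defines the value of the Mackey functor at $G/H$ to be $K\bigl(R^H_{hf}(X)\bigr)$, the $K$-theory of finite retractive \emph{$H$-equivariant cell complexes} over $X$ with genuine $H$-equivariant weak equivalences, and then cites an external theorem (a consequence of additivity, worked out in \cite{wojciech}) that this $K$-theory splits as $\prod_{(K)\leq H}\bA(X^K_{hW_HK})$. You instead propose to start from the already-split coefficient system $G/H\mapsto \bA(X^H_{hW_GH})$ and \emph{freely adjoin transfers}, then claim this freely generated Mackey functor exhibits the splitting as its value. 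That last claim is not a general fact about Mackey functors built by left Kan extension from the orbit category into the Burnside category, and you give no argument or reference for it. The tom Dieck splitting for suspension spectra is a theorem about a specific functor, not a formal consequence of ``freeness,'' and there is no analogous off-the-shelf statement for $A$-theory; the paper avoids the issue entirely by not building the Mackey functor from the split pieces in the first place.

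Your Step 1 also has an unaddressed difficulty: it is not clear that $G/H\mapsto \bA(X^H_{hW_GH})$ is a contravariant functor on the orbit category. A map $G/K\to G/H$ forces $K$ to be subconjugate to $H$, and there is a restriction $X^H\hookrightarrow X^K$, but the Weyl groups $W_GH$ and $W_GK$ are not related by any natural map, so there is no obvious comparison of Borel constructions $X^H_{hW_GH}\to X^K_{hW_GK}$ (in either direction). The paper's categories $R^H_{hf}(X)$ sidestep this: restriction, conjugation, and induction along maps of orbits are elementary constructions on equivariant retractive spaces (restrict the action, reindex the action, form $K\times_L(-)$ pushed out over $X$), and the needed coherences — unitality, associativity of the span action, Beck--Chevalley — are verified directly in the paper. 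To repair your proposal you would essentially be driven back to exactly this: replace the coefficient system $\bA(X^H_{hW_GH})$ with $K(R^H_{hf}(X))$, construct the span action on these Waldhausen categories directly, verify the conditions needed to feed the Guillou--May/Bohmann--Osorno machine, and \emph{deduce} the splitting from the additivity theorem rather than building it in.
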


To be more specific, the fixed points are the $K$-theory of the category $R^G_{hf}(X)$ of finite retractive $G$-cell complexes over $X$, with equivariant weak homotopy equivalences between them. The splitting of this $K$-theory is a known consequence of the additivity theorem, and an explicit proof appears both in \cite{wojciech} and in earlier unpublished work by John Rognes from the early 1990s. In fact, this earlier work by Rognes seems to be the first place where the spectrum $K(R^G_{hf}(X))$ was studied, and it was motivated by a possible variant of the Segal conjecture for $A$-theory.
%
%


In a subsequent paper, we plan to explain how $\bA_G(X)$ fits into a genuinely $G$-equivariant generalization of Waldhausen's parametrized $h$-cobordism theorem. The argument we have in mind draws significantly from an analysis of the fixed points of our $\bA_G(X)$ carried out by Badzioch and Dorabia\l{}a \cite{wojciech}, and a forthcoming result of Goodwillie and Igusa that defines $\mc H^\infty(X)^G$ and gives a splitting that recovers \eqref{eq:pi0_splitting}. We emphasize that lifting these theorems to genuine $G$-spectra permits the tools of equivariant stable homotopy theory to be applied to the calculation of $\mc H^\infty(X)^G$, in addition to the linearization and trace techniques that have been used so heavily in the nonequivariant case.

Most of the work in this paper is concerned with constructing equivariant spectra out of category-theoretic data. One approach is to generalize classical delooping constructions such as the operadic machine of May \cite{MayGeo} or  the $\Gamma$-space machine of Segal \cite{segal} to allow for deloopings by representations of $G$. Using the equivariant generalization of the operadic infinite loop space machine from \cite{GM3}, we show how this approach generalizes to deloop Waldhausen $G$-categories.

The theory of Waldhausen categories with $G$-action is subtle. Even when the $G$-action is through exact functors, the fixed points of such a category do not necessarily have Waldhausen structure (\autoref{waldfixedpts}). Define $\tG$ be the category with objects the elements of $G$ and precisely one morphism between any two objects, whose classifying space is $EG$. Let $\Cat(\tG, \sC)$ be the category of all functors and all natural transformations with $G$ acting by conjugation; we define the \emph{homotopy fixed points} $\sC^{hG}$ of a $G$-category $\sC$ as the fixed point category $\Cat(\tG, \sC)^G$, and we explain in \S\autoref{waldhausen gcat} how this category does have a Waldhausen structure.

The ``equivariant $K$-theory of group actions'' of Barwick, Glasman, and Shah produces a genuine $G$-spectrum (using the framework of \cite{Gmonster}) whose $H$-fixed points are $K(\mc C^{hH})$ \cite[\S 8]{Gmonster2}. We complement this with a result that shows the $G$-space $|\Cat(\tG, \sC)|$ may be directly, equivariantly delooped.

\begin{thm}[\autoref{inf loop} and \autoref{fixed_points_agree}]\label{thm:can_deloop_gwalds}
If $\mc C$ is a Waldhausen $G$-category then the $K$-theory space defined as $K_G(\sC):=\Omega |w\mc S_{\sbt} \Cat(\tG, \sC)|$, where $\mc S_{\sbt} $ is Waldhausen's construction from \cite{waldhausen},  is an equivariant infinite loop space. The $H$-fixed points of the resulting $\Omega$-$G$-spectrum are equivalent to the $K$-theory of the Waldhausen category $\sC^{hH}$ for every subgroup $H$.
\end{thm}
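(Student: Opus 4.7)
My plan is to feed the $G$-space $|w\mc S_\sbt \Cat(\tG, \sC)|$ through the equivariant operadic infinite loop space machine of \cite{GM3}. The crucial categorical input is that $\Cat(\tG, \sC)$, with the Waldhausen structure of \S\ref{waldhausen gcat}, admits a strictly $G$-equivariant coproduct computed pointwise in $\sC$, since conjugation preserves pointwise operations. Thus $\Cat(\tG, \sC)$ is a permutative $G$-category, and the simplicial $G$-category $w\mc S_\sbt \Cat(\tG, \sC)$ is naturally a $\Gamma$-$G$-category. Its classifying space is then an $E_\infty$-$G$-space, which I would show is group-complete (so that its equivariant infinite loop space is modeled by a single $\Omega$) using the equivariant additivity theorem applied on each fixed-point category. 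The \cite{GM3} machine applied to this data yields a genuine $\Omega$-$G$-spectrum with zeroth space $K_G(\sC) = \Omega|w\mc S_\sbt \Cat(\tG, \sC)|$.

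For the fixed-point identification, I would argue in two steps. First, each of $w$, $\mc S_\sbt$, and geometric realization commutes with $H$-fixed points, because they are built out of limits and nerves:
\[
(|w\mc S_\sbt \Cat(\tG, \sC)|)^H \;\cong\; |w\mc S_\sbt (\Cat(\tG, \sC)^H)|.
\]
Second, $\Cat(\tG, \sC)^H$ is equivalent to $\sC^{hH} = \Cat(\tH, \sC)^H$ as a Waldhausen category. To see this, choose coset representatives for $H\backslash G$ to define an $H$-equivariant retraction $p: \tG \ra \tH$ of the inclusion $\iota : \tH \ra \tG$; since $\tG$ and $\tH$ have a unique morphism between any two objects, $\iota \circ p$ is canonically and $H$-equivariantly isomorphic to the identity. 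Restriction along $\iota$ is therefore an exact equivalence of Waldhausen $H$-categories $\Cat(\tG, \sC)^H \simar \sC^{hH}$, and so induces an equivalence on $K$-theory.

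The main obstacle is the construction of the $E_\infty$-$G$-operad action in the first step: one must verify that the permutative $G$-structure on $\Cat(\tG, \sC)$ extends to a genuine $G$-equivariant operadic algebra (not merely a $G$-action on a nonequivariant $E_\infty$-algebra) compatible with the operad used in \cite{GM3}, and then establish equivariant group-completeness so that the resulting $\Omega$-$G$-spectrum is modeled by $\Omega|w\mc S_\sbt \Cat(\tG, \sC)|$ rather than a larger bar construction. The fixed-point identification, by contrast, is essentially formal once one observes the $H$-equivalence $\tG \simeq \tH$.
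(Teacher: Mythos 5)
Your proposal has two genuine gaps, both at exactly the places the paper takes care to work around.

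First, the claim that ``$\Cat(\tG,\sC)$ admits a strictly $G$-equivariant coproduct computed pointwise in $\sC$, since conjugation preserves pointwise operations,'' and hence is a permutative $G$-category, does not hold. The $G$-action is by conjugation, $(g\cdot F)(h)=gF(g^{-1}h)$, so the pointwise coproduct $(F_1\oplus F_2)(h)=F_1(h)\oplus F_2(h)$ satisfies $g\cdot(F_1\oplus F_2)=(g\cdot F_1)\oplus(g\cdot F_2)$ only if $g(A\oplus B)=gA\oplus gB$ holds strictly in $\sC$. But $g$ acts by exact functors, which preserve coproducts only up to canonical isomorphism; the paper emphasizes exactly this at the start of \S\ref{waldhausen gcat}. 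The strictly equivariant sum on $\Cat(\tG,\sC)$ one actually gets (via \myref{pseudo equiv}) is $(F_1\oplus F_2)(g)=g\bigl(g^{-1}F_1(g)\oplus g^{-1}F_2(g)\bigr)$, which is not pointwise. Moreover, even after strictifying the $G$-equivariance, the underlying symmetric monoidal structure inherited from $\sC$ is not permutative: you still need MacLane strictification $\sC\mapsto\sC^{\mathrm{str}}$, and then \myref{rectify_symmetric} to see that $\Cat(\tG,\sC)$ is $G$-equivalent to the genuine $\sO_G$-algebra $\Cat(\tG,\sC^{\mathrm{str}})$. Skipping this leaves you without an actual $\sO_G$-action to feed to the machine of \cite{GM3}, which is precisely the ``main obstacle'' you flag at the end -- but you flag it after having asserted it away.

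Second, the fixed-point identification is not ``essentially formal once one observes $\tG\simeq\tH$.'' That observation (\myref{hfixed}) correctly gives $|w\mc S_\sbt\Cat(\tG,\sC)|^H\cong|w\mc S_\sbt\sC^{hH}|$ and hence identifies the zeroth \emph{space} of the $H$-fixed point spectrum. But the theorem asserts that the $H$-fixed point \emph{spectrum} is equivalent to Waldhausen's $K$-theory spectrum of $\sC^{hH}$, which is defined by iterated $\mc S_\sbt$. What the operadic machine produces on $H$-fixed points is the operadic delooping of $|w\mc S_\sbt\sC^{hH}|$, and one must still compare this to the $\mc S_\sbt$-iterated delooping. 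That comparison is Waldhausen's bispectrum argument, made precise in \cite[Thm 3.11]{coassembly}, and it is the substance of \myref{fixed_points_agree}. Your proposal stops at the level of spaces and does not address it.
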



The downside of this approach is that one does not have much freedom to modify the weak equivalences in the fixed point categories. Note that if $X$ is a $G$-space, then the category $R_{hf}(X)$ of homotopy finite retractive spaces over $X$ has a $G$-action. For a retractive space $Y$, $gY$ is defined by precomposing the inclusion map by $g^{-1}$ and postcomposing the retraction map by $g$.  We can apply \autoref{thm:can_deloop_gwalds} to this category, and the resulting theory $\bA_G^\textup{coarse}(X)$ has as its $H$-fixed points the $K$-theory of $H$-\emph{equivariant} spaces over $X$, as we expect, but the weak equivalences are the $H$-maps which are \emph{nonequivariant} homotopy equivalences. Thus, \autoref{thm:can_deloop_gwalds} does not suffice to prove \autoref{thm:agx_exists}.

Although $\bA_G^\textup{coarse}(X)$ does not match our expected input for the $h$-cobordism theorem, it does have a surprising connection to the bivariant $A$-theory of Williams \cite{bruce}:
\begin{thm}[\autoref{prop:coarse_equals_bivariant}]
\label{intro_bivariant} 
There is a natural equivalence of spectra
\[ \bA_G^{\textup{coarse}}(X)^H \simeq \bA(EG \times_H X \ra BH). \]
\end{thm}

In a subsequent paper we will show that under this equivalence, the coassembly map for bivariant $A$-theory agrees up to homotopy with the map from fixed points to homotopy fixed points for $\bA_G^\textup{coarse}(X)$.


In order prove \autoref{thm:agx_exists} it is necessary to modify the weak equivalences in the fixed point categories giving $\bA_G^{\textup{coarse}}(X)^H$, and to do this we use the framework of spectral Mackey functors. These are diagrams over a certain spectral variant of the Burnside category, denoted $\GB$. By celebrated work of Guillou and May, the homotopy theory of $\GB$-diagrams is equivalent to that of genuine $G$-spectra \cite{GM1}. Moreover, there are by now a few different ways to pass from combinatorial, category-theoretic data to diagrams of spectra over $\GB$ \cite{Gmonster, Gmonster2, BO, bohmann_osorno}. In essence, one is allowed to give separately for each $H \leq G$ some permutative category, Waldhausen category, or symmetric monoidal or Waldhausen $\infty$-category $R^H$ whose algebraic $K$-theory will become the $H$-fixed points. The rest of the glue that creates the $G$-spectrum is generated by a large collection of exact functors giving the restrictions, transfers, and sums thereof, between the categories $\{ R^H : H \leq G \}$.

Barwick's approach to managing this large collection of data is to define certain adjoint pairs of functors between the categories $R^H$, satisfying Beck-Chevalley isomorphisms \cite[\S 10]{Gmonster}. These may then be ``unfurled'' to create suitably coherent actions of spans on the categories $R^H$, giving a spectral Mackey functor on the $K$-theory  spectra $K(R^H)$. In \S\autoref{transfer_section}, we describe concretely how spans act on the categories $\{\sC^{hH} : H\leq G\}$ -- this is essentially the application of Barwick's ``unfurling" construction found in \cite[\S 8]{Gmonster2}, but formulated for ordinary Waldhausen categories with a $G$-action. Our variant of this construction is then a  ``Mackey functor of Waldhausen categories" in the sense of Bohmann and Osorno  \cite{bohmann_osorno}, which combined with the theorem of Guillou and May \cite{GM1} gives a genuine $G$-spectrum. This in particular allows an alternative ``spectral Mackey functor" definition of $\bA_G^{\textup{coarse}}(X)$ when one plugs in the category $R(X)$ with the $G$-action described above.

However, as we pointed out, the categories $R_{hf}^H(X)$ are \emph{not} of the form $\sC^{hH}$ -- they have the same objects and maps as $R(X)^{hH}$ but more restricted weak equivalences. In  order to get the desired tom Dieck style splittings of the fixed points, in \S\autoref{sec:final}, we   descend the action of spans on the categories $R(X)^{hH}$ to get a ``Mackey functor of Waldhausen categories" with values $G/H\mapsto  R_{hf}^H(X)$, thereby proving \autoref{thm:agx_exists}. Though we work in the framework of \cite{GM1} and \cite{bohmann_osorno} to build $A_G(X)$, the same constructions appear to also make $R_{hf}^H(X)$ into a Mackey functor of Waldhausen categories within Barwick's framework.

\begin{rem}
There is a ``Cartan'' map
\[ \bA_G(X) \ra \bA_G^\textup{coarse}(X). \]
This becomes a map of genuine $G$-spectra if we define $\bA_G^\textup{coarse}(X)$ using the Mackey structure on $K(\sC^{hH})$.
We believe that this Mackey structure gives the same $G$-spectrum as the one produced by delooping the space $K_G(\sC)$ using \autoref{thm:can_deloop_gwalds}, and that more generally the $K$-theory of group actions from \cite[\S 8]{Gmonster2} gives the same $G$-spectrum as \autoref{thm:can_deloop_gwalds}. The argument we have in mind for the former claim depends on multifunctoriality properties of equivariant $K$-theory that have not yet been carefully established.
\end{rem}

Our constructions are inspired by, but distinct from, the construction of Real algebraic $K$-theory by Hesselholt and Madsen \cite{hesselholt_madsen_real}. We consider Waldhausen categories with (covariant) actions by $G$ through exact functors, whereas the basic input for Real $K$-theory is categories with a contravariant involution. We do not formulate an equivariant version of $\mathcal S_{\sbt}$ here, but we consider this to be a problem of significant importance for future work.

\subsection*{Acknowledgements}
It is a pleasure to acknowledge the contributions to this project arising from conversations with Clark Barwick, Andrew Blumberg, Anna Marie Bohmann, Wojciech Dorabia\l{}a, Emanuele Dotto, Bert Guillou, Tom Goodwillie, Mike Hill, Wolfgang L\" uck, Akhil Mathew, Peter May, Randy McCarthy, Ang{\'e}lica Osorno, John Rognes, Daniel Sch\"{a}ppi, and Stefan Schwede. We particularly thank Clark Barwick for helpful explanations of the $K$-theory of group actions; Anna Marie Bohmann and Ang{\'e}lica Osorno for sharing some of their work in progress that we use in this paper; and John Rognes for sharing with us some of his past unpublished work on equivariant $A$-theory. We are indebted to an anonymous referee for a very careful read and valuable feedback that vastly improved the paper. Finally, we thank the Hausdorff Institute for Mathematics and the Max Planck Institute in Bonn for their hospitality. Both authors were partially supported by AMS Simons Travel Grants. The second named author also acknowledges support from NSF grant DMS 1709461/1850644.

\section{Equivariant $K$-theory of Waldhausen $G$-categories}

Let $G$ be a finite group. In this first section we recall from \cite[\S 2]{thesispaper} the construction $\Cat(\tG,-)$, and how it rectifies pseudo equivariant functors into equivariant ones. In subsection \autoref{strictification} we expand this to a more general strictification result: we show that there is a strictification 2-functor from $G$-categories, pseudo equivariant functors and pseudo equivariant natural transformations to  $G$-categories, equivariant functors and equivariant natural transformations. In subsections \autoref{rect_symm} and \autoref{waldhausen gcat} we give applications of this strictification result to rectifying symmetric monoidal and Waldhausen categories with $G$-action. In subsections \autoref{deloopingsymm} and \autoref{sec:deloop_wald} we show when and how one can deloop symmetric monoidal and Waldhausen categories with $G$-action. In particular, we prove \autoref{thm:can_deloop_gwalds}.

\subsection{Strictification of pseudo equivariance}\label{strictification} Let $G\Cat$ be the 2-category with $0$-cells given by $G$-categories, $1$-cells given by equivariant functors, and $2$-cells given by equivariant natural transformations. For $G$-categories $\sA$ and $\sB$, we define $\Cat(\sA,\sB)$ to be the category of all functors and natural transformations, with $G$ acting by conjugation. More precisely, for $F\colon \sA\to \sB$, 
$g\in G$, and $A$ either an object or a morphism of $\sA$, 
$(gF)(A) = gF(g^{-1}A)$.  Similarly, for a natural transformation 
$\eta\colon E\to F$ and an object $A$ of 
$\sA$, 
$$(g\eta)_A = g\eta_{g^{-1}A}\colon g E(g^{-1}A)\to gF(g^{-1}A).$$
Therefore the fixed point category $\Cat(\sA, \sB)^G$ is the category of equivariant functors and equivariant natural transformations.

\begin{df}
Define $\tG$ to be the  $G$-groupoid with objects the elements of   $G$ and a unique morphism between any two objects. The action of $G$ on the object set $G$ of $\tG$ is by left translation, and this extends in a unique way to an action on the morphisms. Up to $G$-isomorphism, $\tG$ is the translation category of $G$, and its classifying space is the space $EG$. 
\end{df}

\begin{df}
Define  the {\bf homotopy $G$-fixed points}, $\sC^{hG}$, of a $G$-category $\sC$ as $\Cat(\tG, \sC)^G$. \end{df}


\begin{rem}\label{hfixed} For each $H \leq G$, the natural map $\tH\rightarrow \tG$ induced by the inclusion is an equivalence of $H$-categories, meaning it has an $H$-equivariant inverse, and $H$-equivariant natural isomorphisms between both composites and the identity. 
	So, we can unambiguously up to equivalence define the homotopy $H$-fixed points $\sC^{hH}$ as $\Cat(\tG, \sC)^H\simeq \Cat(\ti H, \sC)^H.$
\end{rem}

Recall from \cite[Prop. 2.12]{thesispaper} the following explicit description of the homotopy fixed point category $\Cat(\tG, \sC)^G$. Its objects are objects of $C$ together with isomorphisms $\psi_g\colon C\xrightarrow{\cong} gC$ for all $g\in G$,  such that $\psi_e=\mathrm{id}_C$ and and the following cocycle  condition is satisfied:
\begin{equation}\label{wrong cocycle}
 \psi_{gh}=(g\psi_h)\psi_g.
\end{equation}

A morphism is given by a morphism  $\alpha\colon C\rightarrow  C'$ in $\sC$ such that the following diagram commutes for any $g\in G$ :
$$ \xymatrix{
C \ar[d]_\alpha \ar[rr]^{\psi_g} && g C \ar[d]^{g \alpha}\\
C' \ar[rr]^{\psi'_g} && g C'.} $$

\noindent We recall the following definition \cite[Def. 3.1.]{thesispaper}.
\begin{df}\label{weakequivariant}
A $\emph{pseudo equivariant}$ functor between $G$-categories $\sC$ and $\sD$ is a functor $\Theta \colon \sC \ra \sD$, together with
 natural isomorphisms of functors  $\theta_g$ for all $g\in G$
 
 \[
\xymatrix{
\ \sC\  \ar[r]^-{g\cdot} \ar[d]_{\Theta} \drtwocell<\omit>{<0>\quad   \theta_g}  & \ \sC\  \ar[d]^{\Theta}  \\ 
\ \sD\ \ar[r]_-{g\cdot} & \ \sD\ . \\}
\] 
 such that  $\theta_e=\id$ and  for $g,h \in G$ we have an equality of natural transformations, where on the left hand side we are considering the composite of natural transformations:
\[
\xymatrix{
\ \sC\  \ar[r]^-{h\cdot} \ar[d]_{\Theta} \drtwocell<\omit>{<0>\quad   \theta_h}  & \ \sC\  \ar[d]^{\Theta} \  \ar[r]^-{g\cdot}  \drtwocell<\omit>{<0>\quad   \theta_g}  & \ \sC\  \ar[d]^{\Theta}  \\ 
\ \sD\ \ar[r]_-{h\cdot} & \ \sD\ \ar[r]_-{g\cdot} & \ \sD\ } 
\qquad = \qquad
\xymatrix{
\ \sC\  \ar[r]^-{gh\cdot} \ar[d]_{\Theta} \drtwocell<\omit>{<0>\quad   \theta_{gh}}  & \ \sC\  \ar[d]^{\Theta}  \\ 
\ \sD\ \ar[r]_-{gh\cdot} & \ \sD \ .  }
\]
Requiring this equality makes sense because the outer right down and down right composites in the two diagrams are equal. Explicitly, for $C$ an object of $\sC$, this means that the following diagram commutes:

\[\xymatrix{
\Theta(ghC) \ar[rr]^{\theta_g (hC)} \ar@/^3pc/[rrrr]^{\theta_{gh}(C) \ \ }  & & g\Theta(hC) \ar[rr]^{g \theta_h(C)}  & & gh \Theta(C).
} \]

\end{df}

\begin{rem}\label{afterthetadiagram}
If $\theta_g$ are equalities for all $g\in G$, then $\Theta$ is actually an equivariant functor.
\end{rem}

We may think of a $G$-category as a functor $\mc BG\to\Cat$, where $\mc BG$ is the groupoid with one object and morphism group $G$. Then an equivariant functor is just a natural transformation between the corresponding functors $\mc BG\to \Cat$, and a \emph{pseudo equivariant} functor is a normal pseudo natural transformation. 
Note that the composition of pseudo  equivariant functors $\Phi\circ \Theta$ is again a pseudo equivariant functor with coherence isomorphisms given by $(\phi_g * \Theta) \circ (\Phi * \theta_g)$ where $*$ denotes whiskering. In other words, at an object $C$,  this is the composite
\[ \xymatrix @C=4em{
	\Phi \Theta gC \ar[r]^-{\Phi(\theta_g(C))} & \Phi g \Theta C \ar[r]^-{\phi_g(\Theta C)} & g\Phi \Theta C.
} \]

We check that this satisfies the required cocycle condition:

\[
\xymatrix @C=4em{
\Phi\Theta ghC\ar[rr]^-{\Phi(\theta_g(hC))} \ar[drr]_-{\Phi(\theta_{gh}(C))} && \Phi g\Theta hC \ar[d]^-{\Phi g(\theta_h(C))} \ar[rr]^-{\phi_g(\Theta hC)} && g\Phi\Theta hC \ar[d]^-{g\Phi(\theta_h(C))} \\
&& \Phi gh\Theta C \ar[rr]^-{\phi_g(h\Theta C)}  \ar[drr]_-{\phi_{gh}(\Theta C)} && g\Phi h \Theta C \ar[d]^-{g(\phi_h(\Theta C))} \\
&&&&gh \Phi\Theta C .
}
\]

We give a definition of pseudo equivariant natural transformations between pseudo equivariant functors.
\begin{df}
Let $\eta\colon \Theta \Rightarrow \Psi$ be a natural transformation between pseudo equivariant functors $\sC\to \sD$. We say that $\eta$ is \emph{pseudo equivariant} if the following diagram commutes:

\[\xymatrix{
\Theta(gC)\ar[rr]^{\eta_{gC}} \ar[d]_-{\theta_g(C)}^-\cong &&\Psi(gC) \ar[d]^-{\psi_g(C)}_-\cong\\
g\Theta(C) \ar[rr]_-{g\eta_C} && g\Psi(C).
}\]
\end{df}
\noindent In particular, if $\Theta$ and $\Psi$ are equivariant functors, i.e., if $\theta_g$ and $\psi_g$ are identities, then $\eta$ is an equivariant natural transformation. Note that the composite of two pseudo equivariant natural transformations is also a pseudo equivariant natural transformation.

\begin{df}
We define the 2-category $G\Cat^{\mathrm{pseudo}}$ with 
\begin{itemize}
\item 0-cells given by $G$-categories;
\item 1-cells given by pseudo equivariant functors;
\item 2-cells given by pseudo equivariant natural transformations.
\end{itemize}
\end{df}

\begin{thm}\label{strictificationthm}
The assignment $\sC\mapsto \Cat(\tG, \sC)$ on $0$-cells extends to a 2-functor $G\Cat^{\mathrm{pseudo}}\to G\Cat$.
\end{thm}

We spend the rest of this section giving the necessary constructions of $1$-cells and $2$-cells and proving this theorem. We recall the following construction and result from \cite{thesispaper}, which gives the construction of $1$-cells of the strictification functor. Given a pseudo equivariant functor $\Theta \colon \sC \to \sD$, we construct a functor $$\widetilde{\Theta}\colon \Cat(\tG, \sC)\to \Cat(\tG, \sD),$$ as follows: for a functor $F\colon \tG\to \sC$,  the functor $\widetilde{\Theta}(F)\colon \tG\to \sD$ is defined on objects by
$$\widetilde{\Theta}(F)(g)=g\Theta ((g^{-1}F)(e))=g \Theta(g^{-1} F(g)),$$ and on morphisms $g\rightarrow g'$ it is defined  as the composite
$$ g \Theta(g^{-1} F(g)) \xrightarrow[\cong]{\theta_g^{-1}(g^{-1}F(g))}  \Theta(gg^{-1} F(g))  \xrightarrow{\Theta(F(g\rightarrow g'))}  \Theta(g'g'^{-1} F(g'))  \xrightarrow[\cong]{\theta_{g'}(g'^{-1}F(g'))} g' \Theta(g'^{-1} F(g')).$$

For a morphism in $\Cat(\tG, \sC)$, namely a natural transformation $\alpha\colon F\Rightarrow E$, the components of $\widetilde{\Theta}(\alpha)$ are defined as
$$\widetilde{\Theta}(\alpha)_g=g\Theta(g^{-1}\alpha_g).$$ It was checked in \cite[Prop. 3.3.]{thesispaper} that this is indeed a natural transformation.

\begin{prop}(\cite[Prop. 3.3.]{thesispaper})\label{pseudo equiv}
For a pseudo equivariant functor $\Theta \colon \sC \to \sD$ the  induced functor $$\widetilde{\Theta}\colon \Cat(\tG, \sC)\to \Cat(\tG, \sD),$$ as defined above, is on the nose equivariant. \end{prop}

\begin{rem}
Note that if $\Theta\colon \sC \to \sD$ is equivariant and not only pseudoequivariant, then $\widetilde{\Theta}\colon \Cat(\tG, \sC)\to \Cat(\tG, \sD)$ is the functor induced by postcomposition. If $\Theta$ is not equivariant but only pseudoequivariant, then the functor induced by postcomposition would  not be  equivariant.
\end{rem}

The induced map on homotopy fixed points $\widetilde{\Theta}^H\colon \sC^{hH}\to \sD^{hH}$  
takes an object $C$ with choices of isomorphisms $\psi_g\colon C\xrightarrow{\cong} gC$ to $\Theta(C)$ with isomorphisms $\Theta(C)\xrightarrow{\cong} g\Theta(C)$ defined as the composites $$\Theta(C)\xrightarrow[\cong]{\Theta(\psi_g)} \Theta(gC)\xrightarrow[\cong]{\theta_g(C)} g\Theta(C).$$ It was checked explicitly in \cite{thesispaper} that these composites satisfy the required cocycle condition.

We expand on \autoref{pseudo equiv} to $2$-cells. Suppose $\eta\colon \Theta \Rightarrow \Psi$ is a pseudo equivariant natural transformation between pseudo equivariant functors $\sC\to \sD$. 
%
We define a natural transformation $\widetilde{\eta} \colon\widetilde{\Theta} \Rightarrow \widetilde{\Psi}$ of functors $\Cat(\tG, \sC) \to \Cat(\tG, \sD)$. Let $F$ be an object in $\Cat(\tG, \sC)$. Define $\widetilde{\eta}_F\colon \widetilde{\Theta}\to \widetilde{\Psi}$ to be the natural transformation of functors $\tG\to \sD$ with $g$ component defined by
$$(\widetilde{\eta}_F)_g\colon  g\Theta(g^{-1}F(g))\xrightarrow{g\eta_{g^{-1}F(g)}} g\Psi(g^{-1}F(g)).$$ We note that this gives indeed a natural transformation, since for a map $g\to h$ in $\tG$, the following naturality diagram commutes---the upper and lower squares commute because we assumed $\eta$ is pseudoequivariant and the middle square is the naturality square for $\eta$:
\[\xymatrix{
g\Theta(g^{-1}F(g))\ar[d]_-{\theta_g^{-1}(g^{-1}F(g))} \ar[rr]^-{g\eta_{g^{-1}F(g)}} && g\Psi(g^{-1}F(g)) \ar[d]^-{\psi^{-1}_g(g^{-1}F(g))}\\
\Theta(F(g)) \ar[d] \ar[rr]^-{\eta_{F(g)}}&& \Psi(F(g))\ar[d]\\
\Theta(F(h)) \ar[d]_-{\theta_h(h^{-1}F(h))} \ar[rr]_-{\eta_{F(h)}} &&\Psi(F(h))\ar[d]^-{\psi_h(h^{-1}F(h))}\\
h\Theta(h^{-1}F(h)) \ar[rr]_-{g\eta_{h^{-1}F(h)}} && h\Psi(h^{-1}F(h)).
}\]

Similarly, for a natural transformation $\alpha\colon F\Rightarrow E$ of functors $\tG\to \sC$, the necessary naturality diagram of natural transformations of functors $\tG\to \sD$
\[\xymatrix{
\widetilde{\Theta}(F) \ar[d]_-{\widetilde{\Theta}(\alpha)}\ar[rr]^-{\widetilde{\eta}_F}&& \widetilde{\Psi}(F)\ar[d]^-{\widetilde{\Psi}(\alpha)}\\
\widetilde{\Theta}(E) \ar[rr]_-{\widetilde{\eta}_E} && \widetilde{\Psi}(E)
}\]
translates, on component $g$, to the following diagram

\[\xymatrix @R=2em @C=4.5em{
g\Theta(g^{-1}F(g)) \ar[d]_-{g\Theta(g^{-1}\alpha_g)} \ar[r]^-{\theta_g^{-1}(g^{-1}F(g))} & \Theta(F(g))\ar[d]_-{\Theta(\alpha_g)} \ar[r]^-{\eta_{F(g)}}& \Psi(F(g)) \ar[d]^-{\Psi(\alpha_g)} \ar[r]^-{\psi_g(g^{-1}F(g))} & g\Psi(g^{-1}F(g)) \ar[d]^-{g\Psi(g^{-1}\alpha_g)}\\
g\Theta(g^{-1}E(g)) \ar[r]_-{\theta_g^{-1}(g^{-1}E(g))} & \Theta(E(g))  \ar[r]_-{\eta_{E(g)}}& \Psi(E(g)) \ar[r]_-{\psi_g(g^{-1}E(g))} &
g\Psi(g^{-1}E(g)).
}\]
The outer squares commute by the naturality of the isomorphisms $\theta_g$ and $\psi_g$, and the middle square commutes by the naturality of $\eta$.


\begin{prop}
For a pseudo equivariant natural transformation $\eta\colon \Theta \Rightarrow \Psi$  between pseudo equivariant functors $\sC\to \sD$, the natural transformation $\widetilde{\eta} \colon\widetilde{\Theta} \Rightarrow \widetilde{\Psi}$, as defined above, is an equivariant natural transformation between equivariant functors.
\end{prop}

\begin{proof} We can see that the natural transformation  $\widetilde{\eta} \colon\widetilde{\Theta} \Rightarrow \widetilde{\Psi}$ is on the nose equivariant: we check that for any $h\in G$ and $F\in \Cat(\tG, \sC)$, we have an equality $h\widetilde{\eta}_F = \widetilde{\eta}_{hF}$. Note that the $g$ component
$$(h\widetilde{\eta}_F)_g\colon h(\widetilde{\Theta}(F))(g) \xrightarrow{} h(\widetilde{\Psi}(F))(g)$$ is equal to
$$hh^{-1}g\Theta((h^{-1}g)^{-1}F(h^{-1}g))  \xrightarrow{hh^{-1}g\eta_{(h^{-1}g)^{-1}F(h^{-1}g)}} hh^{-1}g\Psi((h^{-1}g)^{-1}F(h^{-1}g)).$$
On the other hand, the $g$ component
$$ (\widetilde{\eta}_{hF})_g\colon g\Theta(g^{-1}(hF)(g)) \to g\Psi (g^{-1}(hF)(g)) $$ is equal to
$$g\Theta(g^{-1}hF(h^{-1}g)) \xrightarrow{g\eta_{g^{-1}(hF)(g)}} g\Psi(g^{-1}hF(h^{-1}g)).$$
Thus $(h\widetilde{\eta}_F)_g= (\widetilde{\eta}_{hF})_g$. 
\end{proof}

In order to show that we have defined a 2-functor $G\Cat^{\mathrm{pseudo}}\to G\Cat$ and thus complete  the proof of \autoref{strictificationthm}, we need to show that composition of functors, identity functors, identity natural transformations, and both horizontal and vertical composition of natural transformations is strictly preserved. We leave the straightforward check that $\widetilde{\Theta\circ\Psi} =\widetilde{\Theta}\circ \widetilde{\Psi}$, $\widetilde{\eta_1 * \eta_2}=\widetilde{\eta_1} * \widetilde{\eta_2}$, $\widetilde{\eta_1\circ \eta_2}=\widetilde{\eta_1}\circ\widetilde{\eta_2}$, and that $\widetilde{\id}=\id$ both on functors and natural transformations to the reader.


\subsection{Rectification of symmetric monoidal $G$-categories}\label{rect_symm}
One application of \autoref{strictificationthm} is to strictify $G$-actions on symmetric monoidal categories.
Suppose $\sC$ is a symmetric monoidal category, with a $G$-action that preserves the symmetric monoidal structure $\oplus$ up to coherent isomorphism. In other words, $\sC$ is a functor $\mc BG\rightarrow \mathrm{Sym} \Cat^\mathrm{strong}$ from $\mc BG$ to the category of strict symmetric monoidal categories and \emph{strong} monoidal functors. Then the symmetric monoidal structure map $$\sC\times \sC \xrightarrow{\oplus} \sC$$ is pseudoequivariant, where the $G$-action on $\sC\times \sC$ is diagonal. In addition, we get coherent isomorphisms $gI\cong I$ for every $g\in G$, where $I$ is the unit object of $\sC$.

If $\sC$ is such a symmetric monoidal category, then $\Cat(\tG, \sC)$ is a symmetric monoidal category whose sum and unit are strictly $G$-equivariant. This is because \autoref{pseudo equiv} gives an on the nose equivariant functor
$$\oplus\colon \Cat(\tG, \sC\times \sC) \cong \Cat(\tG, \sC) \times \Cat(\tG, \sC) \ra \Cat(\tG, \sC)$$
which we take as the sum in $\Cat(\tG, \sC)$. The unit is the functor $F_I\colon \tG \ra \sC$ defined by $F_I(g)=gI$, where $I$ is the unit of $\sC$. Explicitly, $F_1\oplus F_2$
in $\Cat(\tG, \sC)$  is defined on objects as
$$(F_1\oplus F_2)(g)= g\big(g^{-1}F_1(g)\oplus g^{-1}F_2(g)\big),$$ which, of course, is the same as $F_1(g)\oplus F_2(g)$ when the $G$-action on $\sC$ preserves $\oplus$ strictly, and a morphism $g\rightarrow g'$, it is defined as 
$$(F_1\oplus F_2) (g)\xrightarrow{\cong} F_1(g)\oplus F_2(g) \xrightarrow{F_1(g\rightarrow g')\oplus F_2(g\rightarrow g')}    F_1(g')\oplus F_2(g')\xrightarrow{\cong} (F_1\oplus F_2)(g').$$
These sum formulas motivate our definition of transfers on $\mc C^{hH}$ in \S\autoref{transfer_section} below.

When we take the $K$-theory of $\sC$ below, we will actually want to strictify $\sC$ in two ways: we will want to make the $G$-action commute with the sum strictly, but we will also want to strictify the symmetric monoidal category $\sC$ to a monoidally $G$-equivalent permutative category with $G$-action. We give the details in the discussion before \autoref{rectify_symmetric}.

\subsection{Rectification of Waldhausen $G$-categories}\label{waldhausen gcat}
Now suppose that $\sC$ is a Waldhausen category with $G$-action through exact functors. 
In other words, for each $g\in G$ the functor $g\cdot \colon \sC \rightarrow \sC$ preserves cofibrations, weak equivalences, the zero object, and pushouts along cofibrations. (In fact, the last two are automatic since $g\cdot$ is an isomorphism of categories.) However, we emphasize that $g\cdot$ preserves the zero object and pushouts only \emph{up to unique isomorphism,} and not on the nose.

\begin{obs}\label{waldfixedpts}
In general, the fixed point category $\sC^H$ is \emph{not} a Waldhausen category, because it is not closed under pushouts. A pushout diagram in $\sC^H$ has a pushout in $\sC$, but it is only preserved by the $H$-action up to isomorphism, and so in general it does not lie in $\sC^H$. 
\end{obs}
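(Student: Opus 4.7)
My plan is to exhibit an explicit Waldhausen $G$-category $\sC$ together with a pushout diagram in $\sC^H$ whose pushout object is not fixed by $H$. The underlying obstruction is nearly tautological: pushouts in a category are determined only up to unique isomorphism, and the exact functor $g\cdot$ (for $g\in H$) carries a chosen pushout of a diagram to another (a priori different) pushout of the same diagram. Thus, even if the three corners $A\leftarrow B\to C$ of a pushout square lie in $\sC^H$, a chosen pushout object $P$ need only satisfy $gP \cong P$ rather than $gP = P$.

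To show that the failure genuinely occurs, I would write down the simplest possible example. Starting from a Waldhausen category $\sD$ with trivial $G$-action carrying a nontrivial pushout of some cofibration $B\rightarrowtail A$ along a map $B\to C$, with pushout object $P$, I would form a new category $\sC$ by formally adjoining a second isomorphic copy $P'$ of $P$ together with a chosen isomorphism $P\cong P'$, arranged so that both $P$ and $P'$ are valid pushouts of the original diagram. Letting $G = \Z/2$ act by swapping $P$ and $P'$ while fixing everything else produces a $G$-category whose pushout square in $\sC^G$ has no pushout representative fixed by $G$.

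The only real verification is that the resulting $\sC$ carries a Waldhausen structure on which $G$ acts through exact functors: one inherits the classes of cofibrations and weak equivalences from $\sD$, adjoins the new isomorphism and its inverse, and checks the axioms including the pushout axiom in $\sC$ itself (as opposed to $\sC^G$). No step is difficult; the point of the observation is to motivate the definition of the homotopy fixed point category $\sC^{hH}$ of \S\ref{waldhausen gcat}, which corrects this pathology by replacing the strict condition $gC = C$ with coherent families of isomorphisms $\psi_g\colon C\congar gC$.
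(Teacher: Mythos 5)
This statement is labeled an \emph{Observation} and the paper offers no formal proof: the sentence following the claim (``A pushout diagram in $\sC^H$ has a pushout in $\sC$, but it is only preserved by the $H$-action up to isomorphism\ldots'') is the entirety of the justification, and later (in \S 3.2) the paper points out a different failure mode --- $R(X)^G$ is empty and hence lacks a zero object when $G$ acts nontrivially on $X$. Your proposal, by contrast, tries to make the pushout failure literal by exhibiting an explicit $\sC$ and $H$-fixed span whose pushout has no $H$-fixed representative. That is a legitimately stronger reading of the observation than the paper's informal one, and the underlying mechanism you identify (a chosen pushout $P$ satisfies only $gP\cong P$, not $gP=P$) is exactly the point the paper is making.

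One place your construction is under-specified: after adjoining $P'$ and having $\Z/2$ swap $P$ and $P'$, you assert that the diagram in $\sC^G$ has ``no pushout representative fixed by $G$.'' For that to be true you must also rule out any \emph{other} $G$-fixed object of $\sC$ serving as a pushout --- in particular any object of $\sD$ already isomorphic to $P$ would be fixed (since $G$ acts trivially on $\sD\setminus\{P\}$) and would be a pushout. This is easily repaired by taking $\sD$ skeletal (so $P$ is the unique object of $\sD$ in its isomorphism class) and noting that a pushout in $\sC^G$ would have to be an object of $\sD\setminus\{P\}$, which for any reasonable $\sD$ cannot be universal once $P$ is removed. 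You should also make explicit that after adjoining $P'$, the cofibrations and weak equivalences through $P'$ are those transported along the chosen isomorphism $\iota\colon P\to P'$, so that $\iota$ is both, and that the swap preserves them; this is what makes $\Z/2$ act through exact functors. With those two points spelled out, the example works and delivers more than the paper claims, at the cost of a small side construction the paper avoids by staying at the level of a remark.
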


We will get around this by showing that the homotopy fixed points $\sC^{hH}$ form a Waldhausen category (\autoref{htpyfixedpts}). First we check that $\Cat(\tG, \sC)$ is a Waldhausen $G$-category, by defining the cofibrations and weak equivalences pointwise. More precisely, for $F_1, F_2 \in \Cat(\tG, \sC)$ , $$F_1 \xrightarrow{\eta} F_2 $$ is a cofibration or a weak equivalence if for every $g\in \tG$, the map 
$F_1(g) \rightarrow F_2(g) $ is a cofibration or a weak equivalence, respectively, in $\sC$. If we define the zero object and pushouts in a pointwise manner, they will not be fixed, so we show a little more care:

\begin{lem}\label{zero}
There is a zero object in $\Cat(\tG, \sC)$, which is $G$-fixed.
\end{lem}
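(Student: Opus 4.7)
The plan is to construct an explicit zero object by choosing a zero object $0 \in \sC$ and defining a functor $F_0 \colon \tG \to \sC$ that is strictly $G$-equivariant rather than merely equivariant up to isomorphism. The key observation is that while the $G$-action on $\sC$ preserves the zero object only up to unique isomorphism, the uniqueness in the definition of a zero object is enough to force strict $G$-fixedness at the level of $\Cat(\tG, \sC)$.

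Concretely, I would fix a zero object $0 \in \sC$ and set $F_0(g) = g \cdot 0$ on objects of $\tG$. Since each functor $g \cdot \colon \sC \to \sC$ is an equivalence of categories, every $g \cdot 0$ is again a zero object of $\sC$. For the unique morphism $g \to g'$ in $\tG$, define $F_0(g \to g')$ to be the unique isomorphism $g \cdot 0 \xrightarrow{\cong} g' \cdot 0$ between zero objects of $\sC$. Functoriality then follows immediately from the fact that compositions and identities of such isomorphisms are themselves the unique isomorphisms between zero objects.

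To verify that $F_0$ is $G$-fixed, I would use the conjugation formula $(hF_0)(k) = h \cdot F_0(h^{-1}k)$ from Proposition \myref{pseudo equiv} and strictness of the $G$-action on $\sC$ to compute
\[ (hF_0)(k) = h \cdot (h^{-1}k \cdot 0) = k \cdot 0 = F_0(k). \]
On morphisms, $(hF_0)$ sends the arrow $k \to k'$ to $h \cdot$ applied to the unique iso $h^{-1}k \cdot 0 \xrightarrow{\cong} h^{-1}k' \cdot 0$; since $h \cdot$ preserves isomorphisms and sends zero objects to zero objects, the result is an iso $k \cdot 0 \xrightarrow{\cong} k' \cdot 0$ and therefore equals $F_0(k \to k')$ by uniqueness. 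Hence $hF_0 = F_0$ on the nose for every $h \in G$.

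Finally, I would check that $F_0$ is a zero object in $\Cat(\tG, \sC)$. For any $F \in \Cat(\tG, \sC)$, the pointwise unique map $F(g) \to g \cdot 0 = F_0(g)$ assembles into a natural transformation because both composites around any naturality square land in a zero object and hence must coincide; uniqueness is automatic for the same reason. The initial case is symmetric. The only mild obstacle in this argument is the need to avoid the pointwise-zero functor (which is \emph{not} strictly fixed because $g \cdot 0 \ne 0$ in general), and this is exactly what the reparameterization $F_0(g) = g \cdot 0$ handles.
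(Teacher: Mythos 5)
Your proposal is correct and constructs exactly the same object $F_0(g) = g\cdot 0$ as the paper, resting on the same key observation that all isomorphisms between zero objects are unique. The only difference is expository: where the paper notes that the functor $Z\colon *\to\sC$ picking out $0$ is pseudo-equivariant and then invokes Proposition~\myref{pseudo equiv} to produce the strictly $G$-fixed $F_0$, you unwind that machinery and verify $hF_0 = F_0$ by direct computation, which is equally valid.
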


\begin{proof}
Consider the functor $Z\colon\ast\ra\sC$ from the one object category $\ast$ to $\sC$, which picks out the zero object 0 of  $\sC$. Note that this functor is not equivariant since $ 0\neq g\cdot 0$, but for every $g$ we have a unique isomorphism $\theta_g\colon0\xrightarrow{\cong} g\cdot 0$. Since these isomorphisms are unique, it must be that the isomorphisms $0\xra{\theta_g} g\cdot 0 \xra{g \theta_h} (gh)\cdot 0$ and $0\xra{\theta_{gh}} (gh)\cdot 0$ coincide, and therefore $Z$ is pseudo equivariant. 

By \autoref{pseudo equiv}, since $Z$ is pseudo equivariant, there is an induced on the nose equivariant functor $\ast\cong \cat(\tG, \ast) \ra \cat(\tG, \sC),$ which sends the one object of $\ast$ to the functor $F_0\in \Cat(\tG, \sC)$ defined on objects by  $F_0(g)=g\cdot0$, and defined on the unique morphism  
from $g$ to $h$   by composing the unique isomorphisms $0\cong g\cdot 0$ and $0\cong h\cdot 0$ to get an isomorphism $g\cdot 0 \xra{\cong} h\cdot 0$ in $\sC$. Since the functor $\ast\ra\cat(\tG, \sC)$ with value $F_0$ is equivariant by \autoref{pseudo equiv},  the object $F_0$ of $\cat(\tG, \sC)$ lies in the $G$-fixed point subcategory. It is easy to check that this is a zero object in $\Cat(\tG, \sC)$.

\end{proof}

\begin{lem}\label{pushout}
There exist pushouts along cofibrations in $\Cat(\tG, \sC)$, so that pushouts of $H$-fixed diagrams are $H$-fixed. 
\end{lem}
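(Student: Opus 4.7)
The plan is to build pushouts in $\Cat(\tG, \sC)$ by applying \myref{pseudo equiv} to the pushout functor of $\sC$, in direct analogy with the proof of \myref{zero}. Let $\sD$ denote the category whose objects are spans $B \leftarrow A \rightarrow C$ in $\sC$ with $A \rightarrow B$ a cofibration, and whose morphisms are commuting maps of spans. The $G$-action on $\sC$ induces one on $\sD$ by pointwise application, since each $g\cdot$ is exact. Fix a choice of pushout functor $P \colon \sD \rightarrow \sC$. Because exact functors preserve pushouts along cofibrations up to \emph{unique} isomorphism, there are canonical natural isomorphisms $P(gD) \cong g P(D)$; their uniqueness forces the pentagon and triangle coherences needed to make $P$ into a pseudo-equivariant functor.

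Next I would identify a functor $\tG \rightarrow \sD$ with a span $F_1 \leftarrow F_0 \rightarrow F_2$ in $\Cat(\tG, \sC)$ whose left leg is a pointwise cofibration. Applying \myref{pseudo equiv} then produces an on-the-nose equivariant functor $\widetilde P \colon \Cat(\tG, \sD) \rightarrow \Cat(\tG, \sC)$ with value $\widetilde P(F_\bullet)(g) = g \cdot P(g^{-1} F_\bullet(g))$, which is canonically isomorphic in $\sC$ to the pointwise pushout $P(F_\bullet(g))$ via the pseudo-equivariance isomorphisms of $P$. The cocone inclusions of these pointwise pushouts in $\sC$ assemble into morphisms $F_1, F_2 \rightarrow \widetilde P(F_\bullet)$ in $\Cat(\tG, \sC)$, exhibiting $\widetilde P(F_\bullet)$ as a cocone under the original span.

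To see that this cocone is universal, I would verify the universal property pointwise: any competing cocone with vertex $F'$ produces, by the universal property in $\sC$, a unique morphism at each $g$, and naturality with respect to arrows $g \rightarrow g'$ in $\tG$ follows again from pointwise universality. Finally, since $\widetilde P$ is strictly $G$-equivariant by \myref{pseudo equiv}, it restricts to a functor $\Cat(\tG, \sD)^H \rightarrow \Cat(\tG, \sC)^H$, so the pushout of any $H$-fixed span is automatically $H$-fixed on the nose. The main (but mild) obstacle is checking that $P$ really is pseudo-equivariant and not merely pseudo-equivariant up to further coherence; this reduces entirely to the uniqueness of the canonical isomorphism between two pushouts of the same diagram, which makes all the coherence diagrams commute for free.
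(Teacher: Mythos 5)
Your proof is correct and follows essentially the same route as the paper: both identify the category of pushout diagrams along cofibrations (your $\sD$ is the paper's $\mc P(\sC)$), observe that a chosen pushout functor $\sD \to \sC$ is pseudo-equivariant because pushouts are preserved up to \emph{unique} isomorphism, and then apply \myref{pseudo equiv} to produce a strictly equivariant functor $\Cat(\tG,\sD) \to \Cat(\tG,\sC)$, from which $H$-fixedness of the pushout of an $H$-fixed span is automatic. Your proposal is somewhat more detailed than the paper's one-paragraph argument (which appeals to the analogy with \myref{zero} and simply notes that pushouts in a functor category are pointwise), but the underlying idea and the role of \myref{pseudo equiv} are the same.
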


\begin{proof}
The same argument as in the previous proof applies: if one considers the category of $\mc P(\mc C)$ pushout diagrams along cofibrations, and a functor $\mc P(\mc C) \ra \mc C$ which assigns to each pushout diagram along a cofibration

\[
\xymatrix{
A \ar[d] \ \ar@{>->}[r] & B \\
C &
}
\]

 \noindent a choice $P$ of pushout $B\amalg_A C$, this functor is not equivariant. However, the canonical isomorphisms $g\cdot (B\amalg_A C) \cong (g\cdot B)\coprod_{(g\cdot A)} (g\cdot  C)$ that exist for any pushout in $\sC$ and any $g\in G$ since we are assuming $g\cdot$ is an exact functor, ensure that the functor $\mc P(\mc C) \ra \mc C$ is pseudo equivariant. Therefore by \autoref{pseudo equiv}, we get the nose equivariant functor
\[ \mc P(\Cat(\tG,\mc C)) \cong \Cat(\tG,\mc P(\mc C)) \ra \Cat(\tG,\mc C) \]
Since pushouts of functors are defined objectwise, this assigns to each diagram in $\Cat(\tG,\mc C)$ a pushout, and if the diagram is $H$-fixed then the pushout is $H$-fixed as well.
\end{proof}
From the construction of the corresponding equivariant functor from a pseudo equivariant functor in the proof of \autoref{pseudo equiv}, we get  an explicit description for the pushouts in $\Cat(\tG, \sC)$. For a diagram
\[
\xymatrix{
F_1 \ar[d] \ \ar@{>->}[r] & F_2 \\
F_3 &
}
\]
in $\Cat(\tG, \sC)$, the pushout ${\bf P}\colon \tG\to \sC$ is defined on objects by
$${\bf P}(g)= g\cdot (g^{-1} F_3(g) \coprod_{g^{-1}F_1(g)} g^{-1} F_2(g) ).$$

If the pushout diagram is $G$-fixed, then the pushout {\bf P} is defined by ${ \bf P}(e)=P$, where $P$ is a pushout of the above diagram evaluated at $e$, and ${\bf P}(g)= g\cdot P.$ On morphisms, ${\bf P}(g, g')$ is the composite of the unique isomorphisms $P\cong g\cdot P$ and $P\cong g'\cdot P$.

\begin{thm}\label{htpyfixedpts}
Let $\sC$ be a $G$-equivariant Waldhausen category, and let $H$ be a subgroup of $G$. Then $\sC^{hH}$ is a Waldhausen category with cofibrations and weak equivalences the $H$-fixed cofibrations and weak equivalences in $\Cat(\tG, \sC)$. \end{thm}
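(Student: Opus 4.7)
The plan is to reduce the statement to the fact that $\Cat(\tG,\sC)$ is itself a Waldhausen $G$-category in which the structural data (zero, pushouts, cofibrations, weak equivalences) are compatible with passage to $H$-fixed points. Once this is in hand, the Waldhausen axioms for $\sC^{hH}$ will be inherited pointwise from $\sC$.

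First I would verify that $\Cat(\tG,\sC)$, with the pointwise cofibrations and weak equivalences specified just before the statement, is a Waldhausen category. The zero object is supplied by \myref{zero}, and the required pushouts along cofibrations exist by \myref{pushout}; any isomorphism in $\Cat(\tG,\sC)$ is pointwise an isomorphism in $\sC$, hence a cofibration and a weak equivalence. The Waldhausen axioms—cobase change along cofibrations producing cofibrations, the gluing lemma for weak equivalences, and the fact that weak equivalences are closed under composition and contain all isomorphisms—are all preserved by a pointwise definition, since in each variable $g \in \tG$ they reduce to the corresponding axioms in $\sC$ (using the formula ${\bf P}(g) = g \cdot (g^{-1}F_3(g) \coprod_{g^{-1}F_1(g)} g^{-1}F_2(g))$ from the discussion after \myref{pushout}, each value of a pushout diagram in $\Cat(\tG,\sC)$ is isomorphic to a pushout in $\sC$).

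Next I would pass to $H$-fixed points. The key observation is that the three pieces of structure I need to transport to $\sC^{hH} = \Cat(\tG,\sC)^H$ all respect the $H$-action: by \myref{zero} the zero object $F_0$ is $G$-fixed and a fortiori $H$-fixed; by \myref{pushout} the pushout of any $H$-fixed diagram of cofibrations is again $H$-fixed; and cofibrations and weak equivalences in $\sC^{hH}$ are defined to be precisely those morphisms in $\Cat(\tG,\sC)$ that happen to be $H$-fixed. Hence the inclusion $\sC^{hH} \hookrightarrow \Cat(\tG,\sC)$ creates the zero object and the relevant pushouts. The Waldhausen axioms for $\sC^{hH}$ therefore follow formally from those for $\Cat(\tG,\sC)$: an isomorphism in $\sC^{hH}$ is in particular an isomorphism of the underlying functors and thus a cofibration and weak equivalence; cobase change along a cofibration in $\sC^{hH}$ produces a pushout which is $H$-fixed and whose structure map is pointwise a cofibration; and the gluing lemma in $\sC^{hH}$ is the restriction of the gluing lemma in $\Cat(\tG,\sC)$ to $H$-fixed diagrams, whose gluing pushouts remain $H$-fixed.

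The one step that requires genuine care, and which I expect to be the main obstacle, is precisely the compatibility of pushouts with the $H$-action. Because the $G$-action on $\sC$ preserves pushouts only up to unique isomorphism, naively defining the pushout of an $H$-fixed diagram pointwise would give an object whose structure isomorphisms $\psi_g$ must be chosen coherently in order to satisfy the cocycle condition \myref{wrong cocycle}. This coherence is exactly what \myref{pseudo equiv} supplies—applied to the choice-of-pushout functor $\mc P(\sC) \to \sC$, which is pseudo equivariant by uniqueness of pushouts—so the rectified functor $\mc P(\Cat(\tG,\sC)) \simeq \Cat(\tG, \mc P(\sC)) \to \Cat(\tG,\sC)$ is strictly equivariant and thereby assigns $H$-fixed pushouts to $H$-fixed diagrams. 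Once this coherence is invoked, the remainder of the verification is routine.
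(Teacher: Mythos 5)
Your proposal is correct and follows essentially the same approach as the paper's proof: both establish $\Cat(\tG,\sC)$ as a Waldhausen $G$-category with pointwise cofibrations and weak equivalences, then invoke \myref{zero} and \myref{pushout} to show the $H$-fixed structure exists, with the gluing axiom inherited pointwise from $\sC$. The only detail you elide is the paper's explicit check that the map from $F_0$ to any $F$ is a cofibration (via the composite $F_0(g) \cong 0 \rightarrowtail F(g)$ in each variable), but this is implicit in your pointwise verification.
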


\begin{proof}

Note that composition of $H$-fixed maps is $H$-fixed, thus the classes of cofibrations and weak equivalences in $\Cat(\tG, \sC)^H$ are closed under composition, and an $H$-fixed isomorphism is in particular a $H$-fixed cofibration and weak equivalence. 

By \autoref{zero}, there is a zero object $F_0$ in $\Cat(\tG, \sC)^H$. Moreover,  for any functor $F$ in $\Cat(\tG, \sC)$, each map $F_0(g) \rightarrowtail F(g)$ is a cofibration since it is the composite of $g\cdot 0\cong 0 $ and the unique map $ 0 \rightarrowtail F(g)$, which are both cofibrations. Thus the map $F_0\rightarrowtail F$ is by definition a cofibration. 

By \autoref{pushout}, for a pushout diagram along a cofibration in $\Cat(\tG, \sC)^H$, there exists a pushout in this fixed point subcategory. The gluing axiom for weak equivalences is inherited from $\sC$.
\end{proof}

Note that the equivalence from \autoref{hfixed}
$\Cat(\tG, \sC)^H\simeq \Cat(\ti H, \sC)^H$ is an equivalence of Waldhausen categories. 

\subsection{Delooping symmetric monoidal $G$-categories}\label{deloopingsymm}
Classical operadic infinite loop space theory \cite{MayGeo} gives a machine for constructing, from  a space $X$ with an action by an $E_\infty$ operad, an $\Omega$-spectrum whose zeroth space is the group completion of $X$. If in addition $X$ has an action of a finite group $G$ through $E_\infty$ maps, then the resulting spectrum has a $G$-action, namely it is a \emph{na\"ive} $\Omega$-$G$-spectrum. By definition, ``na\"ive" means that the deloopings are only for spheres with trivial $G$-action.

In order to get deloopings by representation spheres $S^V$ for all finite-dimensional representations $V$ of $G$, the $G$-space $X$ needs to be an algebra over a genuine $E_\infty$-$G$-operad.
The difference between a na\"ive and a genuine $E_\infty$ operad $\sO$ lies in the fixed points of the $G \times \Sigma_n$-space $\sO(n)$ for each $n$. For each subgroup $\Lambda \leq G \times \Sigma_n$ we have:
\begin{center}
\begin{tabular}{c|c|c|c}
& $(\Lambda \cap \Sigma_n) \neq \{1\}$ & $(\Lambda \cap \Sigma_n) = \{1\}$ & $
(\Lambda \cap \Sigma_n) = \{1\}$ \\
&& $\Lambda \cap G=\{1\}$ &  $\Lambda \cap G \neq \{1\}$  \\\hline
na\"ive $E_\infty$ operad & $\sO(n)^\Lambda = \emptyset$ & $\sO(n)^\Lambda = \emptyset$ & $\sO(n)^\Lambda \simeq *$ \\
genuine $E_\infty$ operad & $\sO(n)^\Lambda = \emptyset$ & $\sO(n)^\Lambda \simeq *$ & $\sO(n)^\Lambda \simeq *$
\end{tabular}
\end{center}

\begin{rem} In a na\"ive $E_\infty$ operad $\sO$, the spaces $\sO(n)$ are the total spaces of universal principal $\Sigma_n$-bundles with $G$-action, whereas in a genuine $E_\infty$ operad $\sO_G$, the spaces $\sO_G(n)$ are the total spaces of equivariant universal principal $G$-$\Sigma_n$-bundles. For a thorough discussion of equivariant bundle theory, see \cite[Ch.VII]{alaska}.
\end{rem}

There are a few different machines that produce these equivariant deloopings, though they are all equivalent \cite{MMO}. We will focus on the machine of Guillou and May \cite{GM3}. Consider the categorical Barratt-Eccles operad $\sO(j) = \ti \Sigma_j$, and apply $\Cat(\tG,-)$ levelwise. Since $\Cat(\tG, -)$ preserves products, this gives an operad $$\sO_G(j)=\Cat(\tG, \ti \Sigma_j)$$ in $G$-categories. Guillou and May show that the levelwise realizations $|\sO_G(j)|$ then form a genuine $E_\infty$-operad in unbased $G$-spaces.
\begin{thm}[\cite{GM3}]\label{omega}
There is a functor $\bK_G(-)$ from $|\sO_G|$-algebras $X$ to orthogonal $G$-spectra, whose output is an $\Omega$-$G$-spectrum in the sense that the maps
$$\bK_G(X)(V) \to \Omega^{W-V}\bK_G(X)(W)$$
are equivariant equivalences. There is a natural equivariant group completion map $$X\ra \bK_G(X)(0)$$ and a natural weak equivalence of nonequivariant orthogonal spectra
$$\bK(X^H) \to (\bK_G X)^H$$
for all subgroups $H$ of $G$.
\end{thm}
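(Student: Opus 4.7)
The plan is to apply the equivariant operadic infinite loop space machine of Guillou--May \cite{GM3} directly. First I would verify that the levelwise realization $|\sO_G|$, with $\sO_G(n) = \Cat(\tG, \ti\Sigma_n)$, is genuinely $E_\infty$ as a $G$-operad. Using \myref{pseudo equiv} together with the standard description of $\Cat(\tG,\ti\Sigma_n)$, one computes the $\Lambda$-fixed points for a subgroup $\Lambda \leq G \times \Sigma_n$: if $\Lambda \cap \Sigma_n \neq \{1\}$, fixed-point functors cannot exist and the space is empty; otherwise $\Lambda$ is the graph of a homomorphism $\rho\colon H \to \Sigma_n$ for some $H \leq G$, and the fixed-point category is equivalent to the category of $\rho$-twisted equivariant functors from $\tG$ to $\ti\Sigma_n$, which has a unique object up to unique isomorphism, giving a contractible space. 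This matches the bottom row of the table and identifies $|\sO_G|(n)$ with a total space of a universal equivariant principal $G$-$\Sigma_n$-bundle.

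Next I would define $\bK_G(X)$ using the two-sided equivariant bar construction. For each finite-dimensional orthogonal $G$-representation $V$, set
\[ \bK_G(X)(V) = B(\Sigma^V, |\sO_G|, X), \]
where $\Sigma^V$ denotes the functor $Y \mapsto Y \wedge S^V$, extended in the standard way to an $|\sO_G|$-functor valued in based $G$-spaces. Functoriality and orthogonal $G$-spectrum structure maps $S^{W-V} \wedge \bK_G(X)(V) \to \bK_G(X)(W)$ come from the compatibility of the $|\sO_G|$-action with suspension, and from using a sequence of universes of representations. The natural map $X \to \bK_G(X)(0) = B(\mathrm{id}, |\sO_G|, X)$ is then the inclusion of $0$-simplices and serves as the equivariant group completion map.

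The main content is showing that the adjoint structure maps $\bK_G(X)(V) \to \Omega^{W-V} \bK_G(X)(W)$ are equivariant equivalences. This is the equivariant group completion/recognition principle of \cite{GM3}, and its proof reduces to the equivariant approximation theorem: for an $|\sO_G|$-algebra $X$, the natural map $B(\Sigma^V, |\sO_G|, X) \to \Omega^W B(\Sigma^{V+W}, |\sO_G|, X)$ is an equivariant equivalence after group completion. This is where the genuine (as opposed to na\"ive) $E_\infty$ hypothesis is essential, since one must model arbitrary $\Sigma_n$-free $G$-actions on configuration spaces by the operad. I expect this step to be the main technical obstacle; it relies on the equivariant Steiner operad comparison of \cite{GM3} and on the fact that the bar construction interacts correctly with fixed points of free actions.

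Finally, for the comparison on fixed points, I would exploit that geometric realization commutes with $H$-fixed points for simplicial $G$-spaces, and that $|\sO_G|(n)^H$ is a disjoint union over $H$-sets of cardinality $n$ of contractible spaces, reproducing the $n$-th space of a classical $E_\infty$ operad for the nonequivariant machine built from $H$-fixed-point data. Combined with the fact that the $|\sO_G|$-algebra structure on $X$ restricts to an $E_\infty$-algebra structure on $X^H$, this yields a natural map $\bK(X^H) \to (\bK_G X)^H$ between the two bar constructions, and the approximation theorem applied on both sides shows it is a weak equivalence of orthogonal spectra.
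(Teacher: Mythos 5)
This theorem is not proved in the paper at all: it is a recollection, attributed to Guillou and May \cite{GM3}, of their equivariant operadic infinite loop space machine, and the paper uses it as a black box (the only local content is the observation just before the statement that $|\sO_G(j)| = |\Cat(\tG,\ti\Sigma_j)|$ is a genuine $E_\infty$-$G$-operad, which is itself quoted from \cite{GM3}). So there is no proof in the paper to compare your attempt against; you have set yourself the harder task of reconstructing the source's argument.

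As an outline of what \cite{GM3} does, your sketch captures the right broad strokes (identify $|\sO_G|$ as a genuine $E_\infty$-$G$-operad via the fixed-point table, feed it into an equivariant recognition principle, and deduce the fixed-point comparison from the identification of $\sO_G(n)^H$ with a classical $E_\infty$ operad), but a couple of steps are looser than they should be. First, the bar construction you write down, $B(\Sigma^V, |\sO_G|, X)$, is not literally how the Guillou--May machine produces an orthogonal $G$-spectrum: the actual construction routes through the equivariant Steiner operads $\mc K_V$ and their associated monads, together with a comparison to the Segal--Shimakawa machine; the functor $\Sigma^V$ by itself does not carry an $|\sO_G|$-action in the sense required for a two-sided bar construction, and getting the structure maps over a complete universe requires the $E_V$-operad technology. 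Second, the role of the genuine hypothesis is not merely ``modeling $\Sigma_n$-free $G$-actions''; what is really needed is that for each finite-dimensional $V$ the $\Lambda$-fixed points of $\sO_G(n)$ are weakly equivalent to those of the Steiner operad $\mc K_V(n)$, and this uses the full table of graph-subgroup fixed points including the case $\Lambda \cap G \neq \{1\}$. Third, your claim that ``fixed-point functors cannot exist'' when $\Lambda \cap \Sigma_n \neq \{1\}$ is true but needs the observation that $\ti\Sigma_n$ is $\Sigma_n$-free, which is worth making explicit. None of these are fatal gaps in a high-level sketch, but since the paper simply cites \cite{GM3}, the honest answer is that the details you are reconstructing live there, not here.
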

Recall that an equivariant group completion is a map that is a group completion on the $H$-fixed points for all subgroups $H$ of $G$. In particular, if the fixed points $X^H$ are connected for all subgroups $H$, then the map $X\ra \bK_G(X)(0)$ is an equivalence.

Since realization is a symmetric monoidal functor, if $\sC$ is a $G$-category with an action of $\sO_G$, its classifying space $|\sC|$ is an algebra over $|\sO_G|$ in $G$-spaces. We are therefore interested in constructing examples of $\sO_G$-algebras $\mc C$. We first recall that a category $\sC$ with an action of the Barratt-Eccles operad $\sO$ in $\Cat$ is a \emph{permutative category}, i.e. it is symmetric monoidal with strict unit and strict associativity \cite{MayPerm2}. Any symmetric monoidal category $\sC$ can be rectified to an equivalent permutative category by a well known trick of MacLane \cite{MacLane}. The MacLane strictification functor $(-)^\mathrm{str}\colon \mathrm{Sym}\Cat^\mathrm{strong} \to \mathrm{Sym}\Cat^\mathrm{strict}$, from the category of symmetric monoidal categories and strong symmetric monoidal functors to the category of strict symmetric monoidal categories and strict symmetric monoidal functors, is the left adjoint of the forgetful map $\mathbb{U}$. The category $\sC^\mathrm{str}$ has as objects lists  $(c_1,\ldots,c_n)$ of objects in $\sC$ with sum given by concatenation, and morphisms between  $(c_1,\ldots,c_n)$ and  $(d_1,\ldots,d_m)$ are given by morphisms $c_1\oplus \ldots \oplus c_n\to d_1\oplus \ldots d_m$ in $\sC$, where iterated uses of the monoidal product are parenthensized to the left.

If $\sC$ has a coherent $G$-action as in \S\autoref{rect_symm}, then the composition $\mc BG \to \mathrm{Sym} \Cat^\mathrm{strong} \to \mathrm{Sym} \Cat^\mathrm{strict}$ describes $\sC^\mathrm{str}$ as a category with a $G$-action that commutes with the symmetric monoidal product strictly. This action is defined on objects by $g(c_1,\ldots, c_n)=(gc_1,\ldots, gc_n)$, and on morphisms by
$$gc_1\oplus\ldots \oplus gc_n\cong g(c_1\oplus \ldots \oplus c_n)\to g(d_1\oplus \ldots \oplus d_m)\cong (gd_1\oplus \ldots \oplus gd_m).$$
The components of the unit of the adjunction $\eta\colon \mc C \to \mathbb{U} \mc C^\mathrm{str}$ are strong symmetric monoidal equivalences of symmetric monoidal categories with inverses $\eta^{-1}$ sending the list $(c_1,\ldots,c_n)$ to $c_1 \oplus \ldots \oplus c_n$. We have observed in \cite{thesispaper} that the equivalence of $\sC$ and $\sC^\mathrm{str}$ is through $G$-equivariant functors, when the action on $\sC$ commutes with $\oplus$ strictly. However, now we are assuming that $g$ commutes with $\oplus$ only up to coherent isomorphism. In this case, $\eta$ is still equivariant, but the inverse equivalence $\eta^{-1}$ is only pseudo-equivariant. After applying $\Cat(\tG, -)$, we conclude by \autoref{strictificationthm} that $\eta$ and $\eta^{-1}$ give a $G$-equivariant monoidal equivalence of categories
\[ \Cat(\tG,\mc C) \simeq \Cat(\tG,\mc C^\mathrm{str}). \]

We summarize this discussion in the next proposition.

\begin{prop}\label{rectify_symmetric}
Let $\sC$ be a symmetric monoidal category with $G$-action given through strong monoidal endofunctors. Then the symmetric monoidal $G$-category $\Cat(\tG, \sC)$ is $G$-equivalent to the $\sO_G$-algebra $\Cat(\tG, \sC^\mathrm{str})$.
\end{prop}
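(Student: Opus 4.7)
The plan is to stitch together the pieces already in place. First, I would establish the $\sO_G$-algebra structure on $\Cat(\tG, \sC^\mathrm{str})$. Since $\sC^\mathrm{str}$ is permutative, it is a strict $\sO$-algebra for the categorical Barratt-Eccles operad. As noted in the discussion above, the strictified $G$-action on $\sC^\mathrm{str}$, given on objects by $g(c_1,\ldots,c_n) = (gc_1,\ldots,gc_n)$, commutes strictly with concatenation, so the structure maps $\sO(n) \times (\sC^\mathrm{str})^n \to \sC^\mathrm{str}$ are strictly $G$-equivariant for the diagonal action. Applying the product-preserving functor $\Cat(\tG,-)$ produces strictly equivariant maps
\[
\Cat(\tG, \sO(n)) \times \Cat(\tG, \sC^\mathrm{str})^n \cong \Cat(\tG, \sO(n) \times (\sC^\mathrm{str})^n) \to \Cat(\tG, \sC^\mathrm{str}),
\]
exhibiting $\Cat(\tG, \sC^\mathrm{str})$ as an $\sO_G$-algebra.

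Next, I would produce the equivalence. The unit of MacLane's adjunction $\eta \colon \sC \to \mathbb{U}\sC^\mathrm{str}$, which sends an object $c$ to the length one list $(c)$, is strictly $G$-equivariant since the $G$-action on length one lists is defined by the $G$-action on $\sC$. Its inverse equivalence $\eta^{-1} \colon \mathbb{U}\sC^\mathrm{str} \to \sC$, sending $(c_1,\ldots,c_n)$ to $c_1 \oplus \cdots \oplus c_n$, is only pseudo-equivariant: the comparison $g(c_1 \oplus \cdots \oplus c_n) \xrightarrow{\cong} gc_1 \oplus \cdots \oplus gc_n$ is the coherent isomorphism coming from the strong monoidal structure on the action of $g$. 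By \myref{pseudo equiv}, this rectifies to an on-the-nose equivariant functor $\widetilde{\eta^{-1}} \colon \Cat(\tG, \sC^\mathrm{str}) \to \Cat(\tG, \sC)$, while $\Cat(\tG,\eta)$ is already strictly equivariant.

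Finally, I would check that $\Cat(\tG, \eta)$ and $\widetilde{\eta^{-1}}$ are mutually inverse $G$-equivalences of symmetric monoidal categories. That they are equivalences of underlying categories is immediate from functoriality of $\Cat(\tG,-)$ applied to the inverse equivalences $\eta$ and $\eta^{-1}$ in $\sC$, and passing to $H$-fixed points for each $H \leq G$ gives an equivalence there as well. The main obstacle I anticipate is verifying that $\widetilde{\eta^{-1}}$ is monoidal for the symmetric monoidal structure on $\Cat(\tG, \sC)$ defined earlier in this subsection, so that both sides agree as $\sO_G$-algebras. Unravelling the formula for the sum $F_1 \oplus F_2$ in $\Cat(\tG, \sC)$ and the explicit formula for $\widetilde{\eta^{-1}}$ coming from \myref{pseudo equiv}, both use the same coherence isomorphisms $g(x \oplus y) \cong gx \oplus gy$ built into the strong monoidal $G$-action, so the two structures match and the proposition follows.
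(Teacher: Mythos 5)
Your proof takes essentially the same route as the paper's discussion preceding the proposition: MacLane strictification yields a permutative $\sC^\mathrm{str}$ with strict $G$-action, the unit $\eta$ of the adjunction is strictly equivariant while $\eta^{-1}$ is only pseudo-equivariant, and applying $\Cat(\tG,-)$ together with \myref{pseudo equiv} rectifies $\eta^{-1}$ to produce the $G$-equivalence. You go a bit further than the paper by spelling out explicitly that $\Cat(\tG,-)$ transports the strict $\sO$-algebra structure on $\sC^\mathrm{str}$ to an $\sO_G$-algebra structure on $\Cat(\tG,\sC^\mathrm{str})$, and by flagging the monoidality check for $\widetilde{\eta^{-1}}$; the paper treats the former as established by \cite{GM3} and the latter implicitly, so your version is the same argument with the bookkeeping made visible.
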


We may therefore deloop the classifying space $|\Cat(\tG, \sC)|$ by representations, simply by applying \autoref{omega} to the equivalent classifying space $|\Cat(\tG, \sC^\mathrm{str})|$.

\subsection{Delooping Waldhausen $G$-categories}\label{sec:deloop_wald}
Recall that the algebraic $K$-theory space of the Waldhausen category $\mc C$ is defined as $\Omega|w \mc S_{\sbt} \mc C|$, where $\mc S_{\sbt} \mc C$ is the simplicial Waldhausen category constructed in \cite{waldhausen}. The $w$ means that we restrict to the subcategory of weak equivalences when we take the nerves of the categories $w\mc S_n \mc C$ for varying $n$, before taking the realization of the resulting bisimplicial set $w  N_{\sbt} \mc S_{\sbt} \mc C$.

This is an infinite loop space whose deloopings are given by iterations of the $S_{\sbt}$-construction. However Waldhausen remarks that it is enough to apply $S_{\sbt}$ once, which has the effect of splitting the exact sequences, and then to use an alternate infinite loop space machine with the group completion property on the space $|w \mc S_{\sbt} \mc C|$. Waldhausen notes that the comparison can be achieved by fitting the two resulting spectra into a bispectrum, and a detailed  proof of this result is written down in \cite{coassembly}. We will use this idea to produce equivariant deloopings of Waldhausen $G$-categories.

Suppose that $\sC$ is a Waldhausen category, with an action of $G$ through exact functors. We give $\Cat(\tG, \sC)$ the Waldhausen category structure defined in \S\autoref{waldhausen gcat}. The $G$-action on $\Cat(\tG, \sC)$ induces a $G$-action on the simplicial Waldhausen category $\mc S_{\sbt} \Cat(\tG, \sC)$, which commutes with fixed points:
$$(\mc S_{\sbt} \Cat(\tG, \sC))^H\cong \mc S_{\sbt} (\Cat(\tG, \sC)^H).$$
\begin{rem}
It does not make sense to ask whether $\mc S_{\sbt}$ commutes with fixed points in general, because the fixed point categories $\sC^H$ do not in general have Waldhausen structure.
\end{rem}

\begin{df}\label{df:deloop_waldhausen}
We define the algebraic $K$-theory $G$-space of a Waldhausen $G$-category $\sC$ as 
$$ K_G(\sC) := \Omega|w \mc S_{\sbt} Cat(\ti G,\sC)|$$
\end{df}

From the above discussion, the $H$-fixed points of this space coincide with the algebraic $K$-theory space of the Waldhausen category $\mc C^{hH}$.




\begin{thm}\label{inf loop}
The space $K_G(\sC)$ is an infinite loop $G$-space.
\end{thm}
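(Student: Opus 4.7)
The plan is to exhibit $|w\mc S_{\sbt} \Cat(\ti G, \sC)|$ as an $|\sO_G|$-algebra in $G$-spaces with connected fixed-point spaces at every subgroup, and then invoke the Guillou--May delooping machine (\myref{omega}) to conclude that its loop space $K_G(\sC)$ is a genuine equivariant infinite loop space.

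First I would equip $\mc S_{\sbt} \Cat(\ti G, \sC)$ with the standard symmetric monoidal structure coming from Waldhausen's $\mc S_{\sbt}$: the sum of two strings $0 \rightarrowtail A_1 \rightarrowtail \cdots \rightarrowtail A_n$ and $0 \rightarrowtail B_1 \rightarrowtail \cdots \rightarrowtail B_n$ is formed pointwise from pushouts along the zero object. By \myref{zero} and \myref{pushout}, the zero object and the pushouts can be chosen to be $G$-fixed in $\Cat(\ti G, \sC)$, so this operation is preserved by the $G$-action through strong (in fact on-the-nose equivariant) monoidal exact functors in each simplicial degree. Hence $\mc S_{\sbt} \Cat(\ti G, \sC)$ is a simplicial symmetric monoidal $G$-category in the sense of \S\ref{rect_symm}.

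Next, I would apply MacLane strictification $(-)^{\mathrm{str}}$ levelwise, as in \myref{rectify_symmetric}, to obtain a simplicial $\sO_G$-algebra $(\mc S_{\sbt} \Cat(\ti G, \sC))^{\mathrm{str}}$ that is $G$-equivalent to $\mc S_{\sbt}\Cat(\ti G, \sC)$ through strong monoidal equivalences. Passing to the subcategory of weak equivalences, then taking nerves and geometric realization (both of which are monoidal and commute with the operadic action and with fixed points), yields a $G$-equivalence between $|w\mc S_{\sbt}\Cat(\ti G, \sC)|$ and an $|\sO_G|$-algebra in $G$-spaces whose $H$-fixed space is $|w\mc S_{\sbt}\sC^{hH}|$.

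Finally, \myref{omega} produces a genuine $\Omega$-$G$-spectrum $\bK_G\bigl(|w\mc S_{\sbt}\Cat(\ti G, \sC)|\bigr)$ together with an equivariant group-completion map from $|w\mc S_{\sbt}\Cat(\ti G, \sC)|$ to its zeroth space. For every $H\leq G$, the fixed space $|w\mc S_{\sbt}\sC^{hH}|$ is connected, because every vertex in level $0$ is the zero object and every vertex in higher levels is connected to a string of zero objects through the $1$-simplices and face maps of $\mc S_{\sbt}$. Therefore the group-completion map is already an equivariant weak equivalence, and looping once gives an equivariant equivalence
\[
K_G(\sC) = \Omega|w\mc S_{\sbt}\Cat(\ti G, \sC)| \;\simeq\; \Omega\bK_G\bigl(|w\mc S_{\sbt}\Cat(\ti G, \sC)|\bigr)(0),
\]
which exhibits $K_G(\sC)$ as an infinite loop $G$-space with deloopings by all representation spheres.

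The main obstacle is the careful bookkeeping needed to verify that the sum on $\mc S_{\sbt}\Cat(\ti G, \sC)$ really lands in the hypotheses of \myref{rectify_symmetric}: one must make canonical choices of pushouts so that associativity, unitality, and symmetry hold with enough coherence, and confirm that MacLane strictification is applied functorially across the simplicial direction so that realization meshes with the $\sO_G$-action. None of this is conceptually surprising, but it is the place where the care taken in \S\ref{waldhausen gcat} to choose $G$-fixed zero objects and pushouts is essential.
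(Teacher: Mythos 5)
Your high-level strategy matches the paper's exactly: realize $w\mc S_{\sbt}\Cat(\tG,\sC)$, exhibit it as an $|\sO_G|$-algebra, observe the fixed points are connected so group completion is already achieved, and invoke \myref{omega}. Where you differ is the order of operations, and this is where the gaps you yourself flag at the end actually bite. You propose to first build a symmetric monoidal structure on $\mc S_{\sbt}\Cat(\tG,\sC)$ and then MacLane-strictify levelwise. But \myref{rectify_symmetric} produces an $\sO_G$-algebra only from input of the specific form $\Cat(\tG,\sD)$ for a symmetric monoidal $G$-category $\sD$; it does not say that an arbitrary symmetric monoidal $G$-category becomes an $\sO_G$-algebra after strictification. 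So to invoke it you would first need to identify $\mc S_n\Cat(\tG,\sC)$ with something of the form $\Cat(\tG,-)$ (or directly verify the $\sO_G(j)$-action maps, which is more than coherence bookkeeping). You also need the gluing lemma to see that the action restricts to the subcategory $w\mc S_{\sbt}$ of weak equivalences, which you pass over in silence.

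The paper sidesteps both problems by reversing the order: treat $\sC$ itself as a symmetric monoidal $G$-category under the coproduct $\vee$ (automatic, since $g$ acts exactly), strictify once to $\sC^{\mathrm{str}}$ and apply $\Cat(\tG,-)$ to get the $\sO_G$-algebra $\Cat(\tG,\sC^{\mathrm{str}})$, transport the Waldhausen structure along the $G$-equivalence $\Cat(\tG,\sC)\simeq\Cat(\tG,\sC^{\mathrm{str}})$, and only then apply $\mc S_{\sbt}$. Because the $\sO_G$-action on $\Cat(\tG,\sC^{\mathrm{str}})$ is by biexact functors, $\mc S_{\sbt}\Cat(\tG,\sC^{\mathrm{str}})$ is automatically a simplicial $\sO_G$-algebra, and the gluing lemma shows $w\mc S_{\sbt}$ inherits the action. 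This does at once the two things you worried about -- coherence of the monoidal structure and compatibility of strictification with the simplicial direction -- without any levelwise bookkeeping. I would rework your argument to strictify $\sC$ before, rather than after, applying $\mc S_{\sbt}$; the rest of your proposal then goes through.
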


\begin{proof} 
As in the proof of \autoref{pushout}, we make a choice of coproduct for any pair of objects in $\sC$. By forgetting structure, each Waldhausen $G$-category $\sC$ is a symmetric monoidal $G$-category under the coproduct $\vee$. The $G$-coherence is automatic because each $g$ acts by exact endomorphisms of the category, and therefore preserves coproducts up to canonical isomorphism. 

By  \autoref{rectify_symmetric} we obtain an $\sO_G$-algebra $\Cat(\tG, \sC^\mathrm{str})$ that is monoidally $G$-equivalent to $\sC$. Since we have an actual $G$- equivalence of categories between $$\Cat(\tG, \sC) \rightleftarrows \Cat(\tG, \sC^\mathrm{str}),$$
$\Cat(\tG, \sC^\mathrm{str})$ has Waldhausen structure obtained by transporting the Waldhausen structure of $\Cat(\tG, \sC)$ along the equivalence, so that the functors in the equivalence are exact. By applying $\mc S_{\sbt}$, we obtain a simplicial $\sO_G$-algebra $\mc S_{\sbt} \Cat(\tG, \sC^\mathrm{str})$.
By the gluing lemma, a coproduct of weak equivalences is also a weak equivalence, so the subcategories of weak equivalences $w \mc S_{\sbt} Cat(\tG,\sC^\mathrm{str})$ also form a simplicial $\sO_G$-algebra.



Since the nerve and geometric realization functors are symmetric monoidal, the space $|w \mc S_{\sbt} Cat(\ti G,\sC^\mathrm{str})|$ is an $|\sO_G|$-algebra, and we have an equivalence of $G$-spaces $$|w \mc S_{\sbt} Cat(\ti G,\sC)| \simeq |w \mc S_{\sbt} Cat(\ti G,\sC^\mathrm{str})|.$$ Furthermore, since geometric realization and $\mc S_{\sbt}$ commute with taking fixed points of $\Cat(\tG, \sC)$, we get a homeomorphism 
\[ |w \mc S_{\sbt} Cat(\ti G,C)|^H \cong |w \mc S_{\sbt} Cat(\ti G,C)^H|. \]
These spaces are all connected, so the $G$-space $|w \mc S_{\sbt} Cat(\ti G,C)|$ is already group complete in the equivariant sense. By \autoref{omega} it is therefore an infinite loop $G$-space.
\end{proof}

\begin{df}
For a Waldhausen $G$-category $\sC$, define $\bK_G (\sC)$ as the orthogonal $\Omega$-$G$-spectrum with zeroth space $K_G(\sC)$ obtained by looping once the spectrum given by applying  \autoref{omega}. \end{df}

\begin{prop}\label{fixed_points_agree}
For every subgroup $H$ of $G$, the orthogonal fixed point spectrum $\bK_G(\sC)^H$ is equivalent to the prolongation to orthogonal spectra of the Waldhausen $K$-theory symmetric spectrum of $\sC^{hH}$ defined by iterating the $\mc S_{\sbt}$-construction. \end{prop}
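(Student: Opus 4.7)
The plan is to take $H$-fixed points, use the last clause of \myref{omega} to reduce to a purely nonequivariant comparison for the space $|w\mc S_{\sbt} \sC^{hH}|$, and then invoke the classical fact that, after one application of $\mc S_{\sbt}$, any group-completion infinite loop space machine produces the same spectrum as iterating $\mc S_{\sbt}$.

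Write $X := |w\mc S_{\sbt}\Cat(\tG,\sC^\mathrm{str})|$, so that by construction $\bK_G(\sC)$ is the orthogonal $\Omega$-$G$-spectrum obtained by looping $\bK_G(X)$ once. Since looping commutes with taking $H$-fixed points, and since \myref{omega} provides a natural equivalence of nonequivariant orthogonal spectra $\bK(X^H) \xra{\sim} \bK_G(X)^H$, we obtain $\bK_G(\sC)^H \simeq \Omega\bK(X^H)$. Next, geometric realization, the nerve functor $w$, and the simplicial construction $\mc S_{\sbt}$ all commute with $H$-fixed points on $\Cat(\tG,\sC^\mathrm{str})$; the last uses that $\Cat(\tG,\sC^\mathrm{str})^H$ is a Waldhausen category by \myref{htpyfixedpts}, so that $(\mc S_{\sbt}\Cat(\tG,\sC^\mathrm{str}))^H \cong \mc S_{\sbt}(\Cat(\tG,\sC^\mathrm{str})^H)$. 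Thus
\[ X^H \;\cong\; |w\mc S_{\sbt}\,\Cat(\tG,\sC^\mathrm{str})^H|. \]
By \myref{rectify_symmetric}, the $G$-equivariant monoidal equivalence $\Cat(\tG,\sC)\simeq \Cat(\tG,\sC^\mathrm{str})$ restricts on $H$-fixed points to a monoidal equivalence of Waldhausen categories $\sC^{hH} \simeq \Cat(\tG,\sC^\mathrm{str})^H$, so $X^H$ is weakly equivalent, as a permutative space, to Waldhausen's classical $K$-theory space $|w\mc S_{\sbt}\sC^{hH}|$ with its coproduct-induced permutative structure.

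It now suffices, nonequivariantly, to identify the orthogonal spectrum $\Omega\bK(|w\mc S_{\sbt}\sC^{hH}|)$ with the prolongation of the iterated $\mc S_{\sbt}$-symmetric spectrum of $\sC^{hH}$. This is Waldhausen's observation that after a single application of $\mc S_{\sbt}$ cofibration sequences split up to weak equivalence, and hence the permutative/operadic machine with the group-completion property produces deloopings with the same homotopy type as further $\mc S_{\sbt}$-iterations; both may be fit into a bispectrum whose rows are the operadic deloopings of $|w\mc S_{\sbt}^{(n)}\sC^{hH}|$ and whose columns are the $\mc S_{\sbt}$-deloopings, and the resulting comparison map is a levelwise equivalence. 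A complete and detailed proof of this bispectrum argument is written down in \cite{coassembly}, and applies verbatim here. I expect the main obstacle to be the bookkeeping needed to check that the $|\sO_G|$-algebra structure on $X$ inherited from \myref{rectify_symmetric} really restricts on $H$-fixed points to the coproduct-induced permutative structure used in \cite{coassembly}; once this identification is made, the cited bispectrum argument supplies the equivalence of orthogonal spectra required by the proposition.
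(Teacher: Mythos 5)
Your proof is correct and takes essentially the same route as the paper: apply \myref{omega} to identify $\bK_G(\sC)^H$ with $\Omega\bK(|w\mc S_{\sbt}\sC^{hH}|)$, then invoke the bispectrum comparison from \cite{coassembly} to identify this with the iterated $\mc S_{\sbt}$-spectrum of $\sC^{hH}$. You have spelled out the intermediate bookkeeping (the identification $X^H \cong |w\mc S_{\sbt}\,\Cat(\tG,\sC^{\mathrm{str}})^H| \simeq |w\mc S_{\sbt}\,\sC^{hH}|$) that the paper treats as implicit, and your closing caveat about matching the operadic structure on $X^H$ with the coproduct-induced permutative structure required by \cite{coassembly} is a fair point that the paper does not belabor, but it is precisely the content of \myref{rectify_symmetric} and \myref{inf loop}, so there is no real gap.
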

\begin{proof}
By \autoref{omega}, we get that $$\bK_G(\sC)^H\simeq \Omega \bK(|w\mc S_{\sbt} \sC^{hH}|), $$ where $\bK$ is the nonequivariant operadic infinite loop space machine landing in orthogonal spectra. By \cite[Thm 3.11.]{coassembly}, the orthogonal spectrum above is equivalent to the prolongation of the symmetric spectrum of $\sC^{hH}$ defined by $\Omega|w\mc S^{(n)}_{\sbt}\sC^{hH}|$, which is Waldhausen's $K$-theory spectrum of the Waldhausen category $\sC^{hH}$.

\end{proof}

\begin{rem}
The argument \cite[Thm 3.11.]{coassembly}  applies verbatim for a Waldhausen category with $G$-action to give an equivalence of na{\"i}ve $G$-spectra. In particular, by applying the argument to the category with $G$-action $\Cat(\tG, \sC)$, we can conclude that the underlying na{\"i}ve orthogonal $G$-spectrum of $\bK_G(\sC)$ is $G$-equivalent to the prolongation of the symmetric spectrum with $G$-action $\Omega |w\mc S^{(n)}_{\sbt} \Cat(\tG, \sC)|$. On fixed points $\sC^{hH}$, the equivalences are obtained by repeating the nonequivariant argument for each $H$, since $\mc S_{\sbt}$ commutes with taking fixed points of $\Cat(\tG, \sC)$.

\end{rem}

\section{The Waldhausen $G$-category of retractive spaces $R(X)$}


Let $G$ be a finite group and let $X$ be an unbased space with a continuous left $G$-action. 
Let $R(X)$ be the category of non-equivariant retractive spaces over $X$. That is, an object of $R(X)$ is an unbased space $Y$ and two maps
\[ X \overset{i_Y}\ra Y \overset{p_Y}\ra X \]
which compose to the identity on $X$. A morphism $f$ in $R(X)$ is given by the following commutative diagram:
$$\xymatrix @R=1em {
& Y \ar[dr]^{p_Y} \ar[dd]^f & \\
X \ar[ur]^{i_Y} \ar[dr]_{i_{Y'}} && X.\\
& Y' \ar[ur]_{p_{Y'}}&
}$$

\subsection{Action of $G$ on $R(X)$}\label{g_action_on_rx}
The category $R(X)$ inherits a left action by $G$, which we describe explicitly. For any $g\in G$, the functor $g\colon R(X)\to R(X)$ sends an object 
\[ X \overset{i_Y}\ra Y \overset{p_Y}\ra X \] to the object
\[X\overset{g^{-1}}\ra X \overset{i_Y}\ra Y \overset{p_Y}\ra X\overset{g}\ra X. \]

\noindent For a map $f\colon (Y, i_Y, p_Y)\to (Y', i_{Y'}, p_{Y'})$, the map $gf$ is defined by the diagram

$$\xymatrix @R=1em {
& Y \ar[dr]^{g \circ p_Y} \ar[dd]^f & \\
X \ar[ur]^{i_Y \circ g^{-1}} \ar[dr]_{i_{Y'} \circ g^{-1}} && X\\
& Y' \ar[ur]_{g \circ p_{Y'}} &
}$$
which clearly also commutes.

We take the weak equivalences in $R(X)$ to be the weak homotopy equivalences, and the cofibrations to be the the maps that have the fiberwise homotopy extension property (FHEP). In \cite{ms}, these are called the $f$-cofibrations. Then the subcategory of cofibrant objects is a Waldhausen category. By abuse of notation, we will also call this subcategory $R(X)$. It is easy to check that the $G$-action we defined above is through exact functors. 

\subsection{Homotopy fixed points of $R(X)$}\label{rhx}
Recall that the fixed point categories $R(X)^H$ may not be Waldhausen (\autoref{waldfixedpts}). In fact, if $X$ has a nontrivial $G$-action, the category $R(X)^G$ is empty and hence fails to contain a zero object.

However by \autoref{htpyfixedpts}, the homotopy fixed point categories $R(X)^{hH}$ have a Waldhausen category structure. In this case, they admit a more explicit description. 

\begin{prop}\label{rxhh}
The Waldhausen category $R(X)^{hH}$ is equivalent to the Waldhausen category with:
\begin{itemize}
\item objects, the $H$-equivariant retractive spaces over $X$, i.e. the space $Y$ has a left action by $H$, the maps $i_Y$ and $p_Y$ are equivariant;
\item morphisms, the $H$-equivariant maps of retractive spaces $Y \to Y'$;
\item cofibrations, the $H$-equivariant maps which are nonequivariantly cofibrations;
\item  weak equivalences, the $H$-equivariant maps which are nonequivariantly weak equivalences.
\end{itemize}
\end{prop}

\begin{proof}
By \autoref{hfixed}, it is enough to prove the result for $H=G$. The objects of the homotopy fixed point category $R(X)^{hG}= \Cat(\tG, R(X))^G$ are retractive spaces $(Y, i_Y, p_Y)$ together with isomorphisms $\psi_g\colon Y \xrightarrow{\cong} Y$ for all $g$ making the following diagram commute:
$$ \xymatrix{
X \ar[rr]^{i_Y} \ar[d]^{g^{-1}} && Y \ar[d]^{\psi_g} \ar[rr]^{p_Y} && X \\
X \ar[rr]^{i_Y} && Y \ar[rr]^{p_Y} && X\ar[u]^g
}$$
We define the left $G$-action on $Y$ by having $g^{-1}$ act by $\psi_g$. The commutativity of the above diagram implies that $i_Y$ and $p_Y$ are equivariant.  It is then clear that the maps in $R(X)^{hG}$ are the $G$-equivariant maps.
\end{proof}

\begin{rem}
Note that the proof of \autoref{rxhh}  actually gives  an isomorphism of categories when $H=G$, but when $H < G$ we only get an equivalence in light of the equivalence of Waldhausen categories  $ \Cat(\tG, R(X))^H\simeq \Cat(\tH, R(X))^H$ from  \autoref{hfixed}.
\end{rem}

Before taking $K$-theory, we will restrict to a subcategory of finite objects. Let $R_{hf}(X)  \subseteq R(X)$ denote the subcategory of retractive spaces that are \emph{homotopy finite}, i.e., a retract in the homotopy category of an actual finite relative cell complex over $X$.

Clearly the action of $G$ on $R(X)$ respects this condition, and so restricts to a $G$-action on $R_{hf}(X)$. The proof of \autoref{rxhh} applies verbatim to give us that $R_{hf}(X)^{hH} = \Cat(\tG,R_{hf}(X))^H$ is the Waldhausen category of retractive $H$-equivariant spaces over $X$ whose underlying space is homotopy finite.

\begin{rem}\label{rmk:could_be_strong}
By Waldhausen's approximation theorem, if we restrict to the subcategory of spaces that are  homotopy equivalent to cell complexes, with the homotopy equivalences on the total space and the HEP cofibrations, we get equivalent $K$-theory.
\end{rem}

\subsection{Definition of $\bA^{\! \textup{coarse}}_G(X)$}

Applying \autoref{df:deloop_waldhausen} and \autoref{inf loop} to the category of retractive spaces $R_{hf}(X)$ provides our first equivariant generalization of Waldhausen's functor.
\begin{df} We define the $G$-space $A_G^{\! \textup{coarse}}(X) := \Omega | w \mc S_{\sbt} \Cat(\tG, R_{hf}(X)) |$. \end{df}
\begin{cor}\label{coarseomega} The $G$-space $A_G^{\! \textup{coarse}}(X)$ is the zeroth space of a $\Omega$-$G$-spectrum $\bA_G^{\! \textup{coarse}}(X)$. \end{cor}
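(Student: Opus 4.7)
The plan is to recognize this corollary as a direct instance of the general machinery already built. First I would verify that $R_{hf}(X)$ satisfies the hypotheses needed to invoke Theorem \ref{inf loop}, namely that it is a Waldhausen category carrying a $G$-action through exact functors. This was already established in \S\ref{g_action_on_rx}: the formula for $g\cdot(Y,i_Y,p_Y)$ preserves the FHEP cofibrations and weak homotopy equivalences (since $g$ is a homeomorphism of $X$), preserves the zero object $X$ up to canonical isomorphism, and preserves pushouts along cofibrations. The restriction to homotopy finite objects is likewise $G$-stable, so $R_{hf}(X)$ is a Waldhausen $G$-category in the required sense.

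With this in hand, I would apply \myref{inf loop} to $\sC = R_{hf}(X)$. By \myref{df:deloop_waldhausen}, this gives that
\[ K_G(R_{hf}(X)) = \Omega|w\mc S_{\sbt} \Cat(\tG, R_{hf}(X))| \]
is an infinite loop $G$-space, which by definition is exactly $A_G^{\textup{coarse}}(X)$. The orthogonal $\Omega$-$G$-spectrum $\bA_G^{\textup{coarse}}(X)$ is then defined, in parallel with the general construction of $\bK_G(\sC)$ preceding \myref{fixed_points_agree}, by setting $\bA_G^{\textup{coarse}}(X) := \bK_G(R_{hf}(X))$; that is, one applies the Guillou--May machine of \myref{omega} to the $|\sO_G|$-algebra $|w\mc S_{\sbt} \Cat(\tG, R_{hf}(X)^{\mathrm{str}})|$ and loops once, as in the proof of \myref{inf loop}.

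There is no substantive obstacle here since all the work has been done: the Waldhausen $G$-category structure on $R_{hf}(X)$ follows from \myref{htpyfixedpts} and the exactness of the $G$-action, and the delooping comes for free from \myref{inf loop}. The only point worth checking carefully is that the connectivity hypothesis used at the end of the proof of \myref{inf loop} (that the fixed point spaces $|w\mc S_{\sbt} \Cat(\tG, R_{hf}(X))^H|$ are connected so that the equivariant group completion is automatically an equivalence) holds here, which it does because each fixed point space is the $K$-theory space of a Waldhausen category and thus the nerve of a simplicial category whose zero-simplices include the zero object.
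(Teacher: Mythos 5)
Your proposal is correct and follows exactly the route the paper intends: the corollary is a direct specialization of \myref{inf loop} and \myref{df:deloop_waldhausen} to $\sC = R_{hf}(X)$, once one notes that the $G$-action of \S\ref{g_action_on_rx} is through exact functors. The paper itself states the corollary without proof for precisely this reason, so no comparison is needed.
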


The upper script ``coarse" indicates that the $H$-fixed point spectrum is the nonequivariant $K$-theory of the category of $H$-equivariant retractive spaces over $X$ with the coarse equivalences. We will proceed to explain how this fixed point spectrum is related to Williams's bivariant $A$-theory functor $\bA(E \to B)$.

\subsection{Relation to bivariant $A$-theory}
For each fibration $p\colon E \to B$ into a cell complex $B$, form a Waldhausen category $R_{hf}(E \overset{p}\to B)$ whose objects are retractive spaces
\[ E \overset{i_Y}\ra Y \overset{p_Y}\ra E \overset{p}\ra B \]
for which $p \circ p_Y$ is a fibration, and over each point $b \in B$ the retractive space $Y_b$ over the fiber $E_b$ is homotopy finite. The weak equivalences are the maps giving  weak homotopy equivalences on $Y$. The cofibrations are the maps with the fiberwise homotopy extension property (FHEP) over $E$.

\begin{df} The bivariant $A$-theory of a fibration $p$ is defined as  $$\bA(E \overset{p}\to B) := K(R_{hf}(E \overset{p}\to B)).$$
Each pullback square of fibrations is assigned to a map
\[ \xymatrix{ E' \ar[r] \ar[d]_-{p'} & E \ar[d]^-p \\ B' \ar[r] & B } \raisebox{-1.6em}{ $\qquad \leadsto \qquad \bA(E \overset{p}\to B) \to \bA(E' \overset{p'}\to B')$ } \]
using the exact functor that pulls back each space $Y$ along $E' \to E$. This makes $\bA$ into a contravariant functor (see \cite[Rmk 3.5]{raptis_steimle}).
\end{df}

\noindent Note that $\bA$ contains as a special case both Waldhausen's $\bA(X) = \bA(X \to *)$ and the contravariant analog $\uda(X) = \bA(X \overset{\id}\to X)$.

\begin{rem}
This definition of bivariant $A$-theory is equivalent to the one given in \cite{raptis_steimle} by an application of the approximation property. Their cofibrations are the maps having the homotopy extension property (HEP) on each fiber separately.
\end{rem}

We may now prove \autoref{intro_bivariant}. We regard $\bA_G^{\textup{coarse}}(X)$ as a symmetric spectrum obtained by iteration of the $\mc S_{\sbt}$-construction, with $G$-action induced by the $G$-action on $R(X)$. We are therefore only considering its underlying na\"ive $G$-spectrum. 

\begin{prop}\label{prop:coarse_equals_bivariant} There is a natural equivalence of symmetric spectra
\[ \bA_G^{\textup{coarse}}(X)^H \simeq \bA(EG \times_H X \to BH) \]
In particular,
\[ \bA_G^{\textup{coarse}}(X)^{\{e\}} \simeq \bA(X), \qquad \bA_G^{\textup{coarse}}(*)^H \simeq \uda(BH) \]
\end{prop}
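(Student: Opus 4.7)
The plan is to prove this by constructing a Borel construction functor between Waldhausen categories and applying Waldhausen's approximation theorem.

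First, I would invoke \myref{fixed_points_agree} together with \myref{rxhh} to identify $\bA_G^{\textup{coarse}}(X)^H$ with the Waldhausen $K$-theory of the category $R_{hf}(X)^{hH}$ of $H$-equivariant retractive homotopy-finite spaces over $X$, equipped with $H$-equivariant FHEP cofibrations and $H$-equivariant maps that are nonequivariant weak homotopy equivalences on total spaces. This reduces the proposition to exhibiting a $K$-theory equivalence between this category and $R_{hf}(EG \times_H X \to BH)$.

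Next, I would define the Borel construction functor
\[ \Phi \colon R_{hf}(X)^{hH} \ra R_{hf}(EG \times_H X \to BH), \qquad Y \mapsto EG \times_H Y, \]
where I replace $EG$ by a CW model so that the projection $EG \times_H Y \to BH$ is a Hurewicz fibration. Each fiber over a point of $BH$ is identified with $Y$, which is homotopy finite, so $\Phi(Y)$ does lie in the target. I would then verify that $\Phi$ is exact: it preserves the zero object, carries FHEP cofibrations over $X$ to FHEP cofibrations over $EG \times_H X$ (the Borel construction commutes with mapping cylinders and preserves retracts), and sends nonequivariant weak equivalences on total spaces to nonequivariant weak equivalences of Borel constructions, since the Borel construction is a homotopy colimit and respects the gluing lemma.

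Finally, I would invoke Waldhausen's approximation theorem. Using \myref{rmk:could_be_strong} I may work with the variant having spaces homotopy equivalent to cell complexes and all weak equivalences; this makes the approximation property easier to verify. Two things must be checked: (i) $\Phi$ reflects weak equivalences, which follows immediately since the underlying space of $Y$ is recovered as the homotopy fiber of $\Phi(Y) \to BH$; and (ii) the approximation property, which says that any map $\Phi(Y) \to W$ in the target factors as a cofibration $\Phi(Y) \rightarrowtail \Phi(Y')$ followed by a weak equivalence $\Phi(Y') \simar W$. The key input here is an ``unstraightening'' observation: the pullback $EG \times_{BH} W$ is an $H$-equivariant fibration over the $H$-free contractible space $EG$, and by an $H$-equivariant trivialization of this fibration one identifies it, up to $H$-equivariant fiberwise equivalence, with $EG \times Y''$ for an $H$-equivariant retractive space $Y''$ over $X$. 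Restricting along a basepoint then gives an $H$-equivariant retractive space $Y'$ over $X$ with $\Phi(Y') \simeq W$, and the map $\Phi(Y) \to W$ may be replaced by an honest map $\Phi(Y) \to \Phi(Y')$ using the $H$-equivariant homotopy extension property for $Y \rightarrowtail Y'$ together with a mapping cylinder factorization.

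The main obstacle will be step (ii): producing the $H$-equivariant trivialization of $EG \times_{BH} W \to EG$ compatibly with the structure map from $\Phi(Y)$. The honest implementation requires a careful choice of sections and use of equivariant homotopy lifting, and it is here that the assumption of $G$ being finite (hence $H$ finite, so $EH$ admits a nice CW model) plays a role. Once this is done, the remaining verifications---preservation of the retraction structure and fiberwise homotopy finiteness---are routine.
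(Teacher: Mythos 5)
Your proof follows the same overall skeleton as the paper — identify $\bA_G^{\textup{coarse}}(X)^H$ with $K(R_{hf}(X)^{hH})$ via \myref{fixed_points_agree} and \myref{rxhh}, pass to the cell/HEP model via Remark \ref{rmk:could_be_strong}, define the Borel functor $\Phi = EG \times_H -$, and invoke Waldhausen's approximation theorem — but the two proofs diverge at the crux, namely the verification of the factorization axiom App~2. You propose an ``unstraightening'' argument: pull $W$ back to an $H$-equivariant fibration $EG \times_{BH} W \to EG$, equivariantly trivialize it to $EG \times Y'$, and then try to cohere the resulting equivalence $\Phi(Y') \simeq W$ with the original map $\Phi(Y) \to W$. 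You correctly flag this coherence as the main obstacle, and it is a genuine one: the trivialization of a fibration over $EG$ is a choice, and making that choice $H$-equivariantly, fiberwise over $X$, \emph{and} compatibly with the given map out of $\Phi(Y)$ is exactly where a careful implementation gets painful. The paper avoids this entirely by using the \emph{right adjoint} of $\Phi$ rather than a trivialization. Since $\Phi$ (as a functor to spaces under $\Phi(X)$) is $EG \times_H -$, its right adjoint is $F(EG,-)$, and the given map $\Phi(Y) \to Z$ has a canonical adjoint $Y \to F_{BH}(EG,Z)$ in $\Cat(\tH,R(X))$. One then factors \emph{this} map through its mapping cylinder $Y' = (Y \barsmash I_+) \cup_{Y \times 1} F_{BH}(EG,Z)$ using the cylinder functor the paper builds from the simplicial tensoring, and pushes back through the adjunction; the resulting $\Phi(Y') \to Z$ is an equivalence because it is a map of fibrations over $BH$ inducing the equivalence $Y' \xrightarrow{\sim} F_{BH}(EG,Z)$ on fibers. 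This is essentially the ``straightened'' form of your idea (the section space $F_{BH}(EG,Z)$ plays the role of your fiber $Y''$, and the adjunction supplies for free the compatible map that your trivialization would have to be engineered to produce). Your step~(i), that $\Phi$ reflects weak equivalences, is a correct observation that the paper leaves implicit. In short: your proposal is on the right track and would likely succeed, but the step you identify as the main obstacle really is a gap as written, and replacing the trivialization by the adjoint $F(EG,-)$ is the cleaner route the paper takes.
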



\begin{proof}
From \autoref{fixed_points_agree}, the fixed points  $\bA_G^{\textup{coarse}}(X)^H$ are given by the Waldhausen $K$-theory of the category  $R_{hf}(X)^{hH}$, which we identify with the category of retractive $H$-equivariant spaces over $X$ with underlying  homotopy finite space, as in \autoref{rxhh}. As explained in Remark \autoref{rmk:could_be_strong}, we may restrict $R_{hf}(X)$ to the spaces with the homotopy type of relative cell complexes, with strong homotopy equivalences and HEP cofibrations. We do so in this proof.

We adopt the shorthand
\[ E = EG \times_H X = B(*,G,G \times_H X), \qquad B = BH = B(*,G,G/H) \]
In particular, we consider $EG$ to be a right $G$-space, not a left one as we did when defining the category $\tG$. We freely use the result that for a well-based topological group $H$ the map $B(*,H,H) \to B(*,H,*)$ is a principal $H$-bundle \cite[Cor 8.3]{MayClass}. This implies that $B(*,H,X) \to B(*,H,*)$ is a fiber bundle with fiber $X$. Since realization of simplicial spaces commutes with strict pullbacks, our desired map $p\colon E \to B$ is a pullback of this fiber bundle, hence also a fiber bundle.

The equivalence of $K$-theory spectra will be induced by the functor $$\Phi\colon R_{hf}(X)^{hH} \ra R_{hf}(EG \times_H X \overset{p}\to BH)$$ that applies $EG \times_H -$ to the retractive space $(Y,i_Y,p_Y)$ over $X$, obtaining a retractive space over $EG \times_H X$:
\[ \xymatrix @C=5em{ EG \times_H X \ar[r]^-{EG \times_H i_Y} & EG \times_H Y \ar[r]^-{EG \times_H p_Y} & EG \times_H X } \]
The composite map $EG \times_H Y \to BH$ is a fiber bundle with fiber $Y$, which is assumed to be a homotopy finite retractive space over $X$.
Therefore $\Phi$ indeed lands in the Waldhausen category $R_{hf}(EG \times_H X \overset{p}\to BH)$.
It is elementary to check that weak equivalences and cofibrations are preserved, and therefore $\Phi$ induces a map on $K$-theory.

To prove that this map is an equivalence we verify the approximation property from \cite{waldhausen}. We observe that the category $\Cat(\tH, R_{hf}(X))^H$ has a tensoring with unbased simplicial sets sending the $H$-space $Y$ over $X$ and a simplicial set $K$ to the external smash product $Y \barsmash |K|_+$. This has the pushout-product property, by the usual formula for an NDR-pair structure on a product of NDR-pairs. Therefore $\Cat(\tH, R_{hf}(X))^H$ has a cylinder functor.

For the first part of the approximation property, note that the map of bundles $EG\times_H Y\to EG\times_H Y'$ is an equivalence if and only if the map of fibers $Y\to Y'$ is an equivalence. For the second part of the approximation property, we use the right adjoint $F(EG,-)$ of the functor $\Phi$ when regarded as a functor from $H$-equivariant spaces under $X$ to spaces under $\Phi(X) = EG \times_H X$. Given a cofibrant retractive $H$-space $Y$ and a map of retractive $\Phi(X)$-spaces $\Phi(Y) \to Z$, we factor the adjoint into a mapping cylinder
\[ \xymatrix{ Y \ar[r] & Y' = Y \barsmash I_+ \cup_{Y \times 1} F_{BH}(EG,Z) \ar[r]^-\sim & F_{BH}(EG,Z) \ar[r] & F(EG,Z) } \]
The map $Y \to Y'$ is a cofibration of spaces under $X$ and over $F(EG,EG \times_H X)$ by the pushout-product property. Pushing $Y'$ back through the adjunction, we get a factorization of retractive spaces over $\Phi(X)$
\[ \xymatrix{ \Phi(Y) \ar[r] & \Phi(Y') \ar[r]^-\sim & Z } \]
The map $\Phi(Y') \to Z$ is an equivalence because it is a map of fibrations whose induced map of fibers is measured by the equivalence $Y' \to F_{BH}(EG,Z)$ from above. This finishes the proof.
\end{proof}

\section{Transfers on Waldhausen $G$-categories}

In this section, we give the construction of $\bA_G(X)$ and prove the following main theorem (\autoref{thm:agx_exists} from the Introduction.)

\begin{thm}\label{agx_exists2}
For $G$ a finite group, there exists a functor $\bA_G$ from $G$-spaces to genuine $G$-spectra with the property that on $G$-fixed points,
\[ \bA_G(X)^G \simeq \prod_{(H) \leq G} \bA(X^H_{hWH}), \]
and a similar formula for the fixed points of each subgroup $H$.
\end{thm}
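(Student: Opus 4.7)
The plan is to assemble $\bA_G(X)$ as a spectral Mackey functor in the sense of Guillou--May, using the variant of the Bohmann--Osorno machine mentioned in the introduction. Concretely, for each subgroup $H\leq G$ the value at $G/H$ will be the Waldhausen $K$-theory of $R_{hf}^H(X)$, the category of finite retractive $H$-cell complexes over $X$ with $H$-equivariant weak homotopy equivalences. The three kinds of morphisms generating the Burnside $2$-category (restrictions, conjugations, and transfers) should be realized as exact functors between these Waldhausen categories, with enough coherence to define a $\GB$-diagram of $K$-theory spectra.

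Restrictions and conjugations are immediate: if $K\leq H$, then the forgetful functor $R_{hf}^H(X)\to R_{hf}^K(X)$ is exact, and conjugation by $g\in G$ sends $R_{hf}^H(X)$ isomorphically to $R_{hf}^{gHg^{-1}}(X)$. The main work is constructing the transfers. For a subgroup inclusion $K\leq H$, the transfer on an object $Y\in R_{hf}^K(X)$ should be the retractive $H$-space $H\times_K Y$, where the pushout identifies the $K$-orbit of the section $X\hookrightarrow Y$ with the section $X\hookrightarrow X$. More generally, the construction in \S\ref{transfer_section} (essentially Barwick's unfurling, adapted to Waldhausen categories with $G$-action, as discussed at the end of the introduction) gives an action of spans on the homotopy fixed points $R(X)^{hH}$, with transfers defined by iterated coproducts indexed over the fibers of an equivariant map. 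I then need to descend this to $R_{hf}^H(X)$. The key observation is that the formulas for the transfers are given objectwise by finite coproducts of objects in $R(X)$, and finite coproducts of finite retractive $H$-cell complexes are again finite retractive $H$-cell complexes, so the transfers preserve the subcategory. The restricted weak equivalences (equivariant weak homotopy equivalences) are also preserved by these coproducts by the gluing lemma. This gives a Mackey functor of Waldhausen categories in the sense of Bohmann--Osorno, and feeding this into their machine produces the genuine $G$-spectrum $\bA_G(X)$.

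To identify the fixed points, I combine the Guillou--May equivalence between spectral Mackey functors and genuine $G$-spectra with the usual tom Dieck style decomposition for $K$-theory of Mackey functors. On $H$-fixed points one gets $K(R_{hf}^H(X))$, and the additivity theorem together with the splitting of $R_{hf}^H(X)$ into orbit types produces the product decomposition over conjugacy classes of subgroups; this is exactly the splitting worked out by Badzioch--Dorabia\l{}a in \cite{wojciech}, which I can import directly. Functoriality in $X$ is automatic from the functoriality of each of the ingredients $R_{hf}^H(-)$.

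The main obstacle is the one already flagged in the introduction: since the fixed point categories $R_{hf}^H(X)$ are \emph{not} of the form $\mc C^{hH}$ (the weak equivalences are strictly more restrictive than in $R(X)^{hH}$), one cannot simply quote the unfurling construction of \S\ref{transfer_section} as a black box. The technical core of the argument will be verifying that every transfer and every Beck--Chevalley style compatibility, constructed at the level of $R(X)^{hH}$, actually restricts to an exact functor between the subcategories $R_{hf}^H(X)$ and preserves the finer class of weak equivalences. Once this compatibility check is complete, the identification of the fixed points and the tom Dieck splitting follow formally.
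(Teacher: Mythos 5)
Your proposal follows essentially the same route as the paper: assemble $\bA_G(X)$ as a spectral Mackey functor with values $G/H\mapsto K(R^H_{hf}(X))$ via the Guillou--May theorem and the Bohmann--Osorno translation, obtain the span action by descending the unfurling/transfer construction on $R(X)^{hH}$ to the subcategories with refined weak equivalences, and import the splitting from Badzioch--Dorabia\l{}a. The one place your sketch understates the work is the ``key observation'' that transfers preserve the subcategory because they are ``objectwise finite coproducts'' and that the finer weak equivalences survive ``by the gluing lemma'': the transfer along $G/L\to G/K$ is realized as the pushout $f_!Y$ of $K\times_L Y$ along $K\times_L X\to X$, and showing that this preserves \emph{equivariant} weak equivalences requires the fixed-point formula $(K\times_L Y)^H\cong\coprod_{kL:\,k^{-1}Hk\le L}Y^{k^{-1}Hk}$ together with the fact that fixed points commute with pushouts along closed inclusions, while preservation of finite complexes is by induction on cells using $X\amalg(L/N\times D^n)\mapsto X\amalg(K/N\times D^n)$; the gluing lemma alone does not see the equivariant cell structure or the fixed-point sets. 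You correctly flag this as the technical core, so once that verification is carried out in this explicit form your argument coincides with the paper's Proposition~\ref{all the actual work}.
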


We construct $\bA_G(X)$ as a spectral Mackey functor because we need the flexibility to refine the weak equivalences in each of the homotopy fixed point categories $R_{hf}(X)^{hH}$. We describe the framework of spectral Mackey functors as models of $G$-spectra, developed by Guillou and May in \cite{GM1}, followed by the work of Bohmann and Osorno \cite{bohmann_osorno}, which constructs categorical input that directly feeds into their theorem. We then construct this categorical input by a 1-categorical variant of a general construction due to Barwick, Glasman and Shah. In particular, our \autoref{clark} can be viewed as a reinterpretation of \cite[8.1]{Gmonster2}. Finally, we construct $\bA_G(X)$ by descending the structure to the Waldhausen categories with refined weak equivalences for each $H\subseteq G$.

\subsection{Review of spectral Mackey functors}

We start with a description of the framework in broad strokes. By a general result of Schwede and Shipley \cite{schwede_shipley_morita}, if $\sC$ is a stable model category with a finite set of generators $\{X_1,\ldots,X_n\}$, then the derived mapping spectra $\sC(X_i,X_j)$ form a spectrally enriched category $\mc B(\sC)$ on the objects $\{X_1,\ldots,X_n\}$. Thinking of such a spectral category as the many-objects version of a ring spectrum, and spectrally-enriched functors into spectra as modules over that ring, there is a model category $\mc Mod_{\mc B(\sC)}$ of modules over $\mc B(\sC)$ and a Quillen equivalence $\mc Mod_{\mc B(\sC)} \simeq \sC$ given by coend with $\{X_i\}$ and its right adjoint:
\[ L(\{M_i\}) = \{X_i\} \sma_{\mc B(\sC)} \{M_i\}, \qquad R(Y)_i = \sC(X_i,Y). \]
This is the spectral analog of classical Morita theory. When $R$ is a ring and $M$ a perfect $R$-module generator, this construction gives an equivalence between $R$-modules and $\End_R(M)$-modules.

Taking $\sC$ to be the category of orthogonal $G$-spectra for a finite group $G$, $\sC$ is generated by the suspension spectra $\Sigma^\infty_+ G/H$ for conjugacy classes of subgroups $(H) \leq G$. By the self-duality of the orbits $\Sigma^\infty_+ G/H$, the mapping spectrum from $G/H$ to $G/K$ may be written as the genuine fixed points of a suspension spectrum
\[ (\Sigma^\infty_+ G/H \times G/K)^G \]
and the compositions are given by stable $G$-maps
\[ G/H \times G/L \times G/L \times G/K \ra G/H \times G/L \times G/K \ra G/H \times G/K \]
which collapse away the complement of the diagonal of $G/L$ and then fold that diagonal to a single point. This gives a category enriched in orthogonal spectra, or symmetric spectra by neglect of structure.

Guillou and May prove that this category is equivalent to a spectral version of the Burnside category, namely a category $\GB$ enriched in symmetric spectra, with objects $G/H$ and morphism symmetric spectra $\GB(G/H,G/K)$ given by the $K$-theory of the permutative category of finite equivariant spans from $G/H$ to $G/K$.  The composition is by pullback of spans, which can be made strictly associative by using a skeleton of the category of finite $G$-sets and by picking explicit models for pullbacks of spans (cf. \cite{GM1}). 

\begin{thm}[Guillou-May]\label{thm:guillou_may}There is a string of Quillen equivalences between $\GB$-modules $\{M_H\}$ in symmetric spectra and genuine orthogonal $G$-spectra $X$. The underlying symmetric spectrum of the fixed points $X^H$ is equivalent to the spectrum $M_H$ for every subgroup $H$.
\end{thm}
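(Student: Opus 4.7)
The plan is to follow the two-step strategy indicated in the paragraphs just before the statement: first apply Schwede--Shipley Morita theory to reduce from genuine $G$-spectra to modules over a spectral endomorphism category on the orbits, and then identify that spectral endomorphism category with $\GB$.

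First, I would verify that the category of orthogonal $G$-spectra, with the stable model structure, satisfies the hypotheses of the Schwede--Shipley theorem \cite{schwede_shipley_morita}: it is a proper, cofibrantly generated, stable simplicial (or topological) model category, and the set $\{\Sigma^\infty_+ G/H\}$ indexed by conjugacy classes $(H)\leq G$ forms a set of compact generators of the triangulated homotopy category. Once this is in hand, Schwede--Shipley produces a spectrally enriched category $\mc B(\sC)$ on these generators, together with a zig-zag of Quillen equivalences between $\sC$ and the model category of $\mc B(\sC)$-modules, implemented by the adjoint pair $(L,R)$ displayed in the excerpt. Evaluating $R$ at $X$ on the object $G/H$ gives the derived mapping spectrum $\sC(\Sigma^\infty_+ G/H, X)$, whose underlying infinite loop space is, by the standard adjunction $\Sigma^\infty_+ G/H \dashv (-)^H$, the fixed-point spectrum $X^H$. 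This already yields the fixed-point identification at the end of the statement.

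The central step is then the spectral identification $\mc B(\sC) \simeq \GB$. For each pair $H,K$ I would use the self-duality of the orbits $\Sigma^\infty_+ G/H$ (Wirthm\"uller/Atiyah duality) to rewrite the mapping spectrum as the genuine fixed points $(\Sigma^\infty_+ G/H \times G/K)^G$, and then apply the equivariant Segal--tom Dieck tensor-to-$K$-theory theorem, which identifies this fixed-point spectrum with the $K$-theory of the permutative category of finite $G$-spans from $G/H$ to $G/K$. Next I would check that the composition in $\mc B(\sC)$---built out of the duality coevaluation $S \to \Sigma^\infty_+ G/L \sma \Sigma^\infty_+ G/L$ followed by the diagonal/collapse described in the excerpt---matches pullback of spans under this identification. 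A priori these compositions are only coherently associative; to rigidify them to a genuine spectral category $\GB$ with strict associativity, I would pass to a skeleton of finite $G$-sets and make a fixed choice of pullback representative in each homotopy class, exactly as in \cite{GM1}. This gives a spectral equivalence $\mc B(\sC) \simeq \GB$, which induces a Quillen equivalence on module categories, and composing with Schwede--Shipley finishes the proof.

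The hard part is the middle step: checking that the composition law on stable maps between orbits is modeled by pullback of spans in a sufficiently coherent (and ultimately strictly associative) way. Duality and the Segal--tom Dieck identification are well-known pointwise, but making the comparison functorial, multiplicative, and strictly associative on the nose---rather than only up to coherent homotopy---requires the skeletal and model-categorical maneuvers of Guillou--May. Fortunately the statement is precisely \cite{GM1}, so in practice I would cite their theorem once the framework above is set up, rather than re-derive the rigidification.
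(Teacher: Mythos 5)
Your proposal is correct and follows essentially the same path the paper takes: it reproduces the Schwede--Shipley Morita reduction and the identification of the spectral endomorphism category with the Burnside category $\GB$ that the paper sketches in the paragraphs preceding the theorem, and then, as the paper itself does, defers the hard rigidification/strict-associativity work to Guillou--May \cite{GM1}. The paper in fact offers no proof beyond this expository setup and the citation, so there is no substantive divergence to report.
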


Therefore, by \autoref{thm:guillou_may}, to create a $G$-spectrum whose $H$-fixed points are $K(R^H_{hf}(X))$, it is enough to show that the symmetric spectra $K(R^H_{hf}(X))$ form a module over the ``ring on many objects'' $\GB$. The spectral category $\GB$ from \cite{GM1} is built using permutative categories; following \cite{bohmann_osorno}, we give an alternate version $\GB_{Wald}$ using Waldhausen categories.

\begin{df}\label{def:Wald_cat_of_spans}
For each pair of subgroups $H,K \leq G$ let $S_{H,K}$ denote the category of finite $G$-sets containing $G/H \times G/K$ as a retract. Such sets are of the form $S \amalg (G/H \times G/K)$, which we abbreviate to $S_+$ when $H$ and $K$ are understood. This is a Waldhausen category in which the weak equivalences are isomorphisms and the cofibrations are injective maps. Of course, the coproduct is disjoint union along $G/H \times G/K$. The zero object is the retractive $G$-set $G/H\times G/K$, namely $\emptyset_+$. We note that it is precisely in order to have a zero object and thus a Waldhausen structure, that we need to consider retractive $G$-sets over $G/H \times G/K$ instead of just spans.

We adopt the conventions of  \cite[\S 1.1]{GM1}, assuming that each of the $G$-sets $S$ is one of the standard sets $\{1,\ldots,n\}$ with a $G$-action given by some homomorphism $G\ra \Sigma_n$, so that the coproduct, product, and pullback are given by specific formulas that make them associative on the nose. In particular, the pullback is defined by taking a subset of the product, ordered lexicographically.

Define a pairing
\[ \ast\colon S_{H,L} \times S_{L,K} \ra S_{H,K} \]
by sending each pair of composable spans $S_+ = S \amalg (G/H \times G/L)$ and $T_+ = T \amalg (G/L \times G/K)$ to the span $(S * T)_+ = (S * T) \amalg (G/H \times G/K)$, where $(S * T)$ is the pullback span
\[ \xymatrix @R=1em @C=1em{
&& (S * T) \ar[ld]_-{p_3} \ar[rd]^-{q_3} && \\
& S \ar[ld]_-{p_1} \ar[rd]^-{q_1} && T \ar[ld]_-{p_2} \ar[rd]^-{q_2} & \\
G/H && G/L && G/K. } \]
Notice that $(S*T)_+$ with the basepoint section is a quotient of the pullback of $S_+$ and $T_+$. This allows us to define for each $f\colon S_+\to S'_+$ and $g\colon T_+\to T'_+$ a map $f*g\colon (S*T)_+ \to (S'*T')_+$ by the universal property of the pullback and the quotient.
This pairing is biexact and strictly associative by our adopted conventions. 

\end{df}

\begin{remark}\label{clone_unit} As discussed in  \cite[\S 1.1]{GM1}, the chosen model for the pullback of $G$-sets has the slight defect that the unit span $(G/H)_+  = G/H \amalg (G/H\times G/H)$ with identity projections
\[ \xymatrix @R=1em @C=1em{
& G/H \ar[ld]_-{\id} \ar[rd]^-{\id} & \\
G/H && G/H } \]

\noindent  is not a strict unit on both sides  of the horizontal composition $*$, but only a unit up to canonical isomorphism on the left side. In order to rectify this, one whiskers the category of spans with a new object $\mathbf{1}_{G/H}$ and a unique isomorphism $\mathbf{1}_{G/H} \cong (G/H)_+$,
and then declares that $\mathbf{1}_{G/H}$ acts as a strict unit for $*$. The structure we defined above extends to $\mathbf{1}_{G/H}$. This follows from the coherence condition that the canonical isomorphisms $(G/H)_+ * S \cong S$ and $S \cong S * (G/K)_+$ are natural in $S$, that the two resulting maps from $S * T$ to $(G/H)_+ * S * T$ must coincide, and a similar statement relating $S * T$ to $S * (G/K)_+ * T$ and to $S * T * (G/L)_+$.
\end{remark}

\begin{df}
Let $\GB_{Wald}$ be the spectrally-enriched category on the objects $G/H$, $(H) \leq G$ whose mapping spectra are the Waldhausen $K$-theory spectra $K(S_{H,K})$.
\end{df}

To translate \autoref{thm:guillou_may} into something that interacts more readily with Waldhausen categories, we use the following result. It follows from the main result of \cite{bohmann_osorno}, which gives a comparison of the multiplicative Segal $K$-theory constructed in \cite{EM} and the multiplicative Waldhausen $K$-theory constructed in \cite{blumberg2011derived,zakharevich2014category}.

\begin{thm}[Bohmann-Osorno]\label{thm:bohmann_osorno}
There is an equivalence of spectrally enriched categories $\GB$ and $\GB_{Wald}$.\end{thm}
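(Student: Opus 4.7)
The plan is to exhibit the equivalence $\GB_{Wald} \simeq \GB$ by establishing three things in sequence: agreement of objects, agreement of each mapping spectrum, and compatibility of the composition pairings. Agreement of objects is immediate since both spectral categories are defined on $\{G/H : (H) \leq G\}$.

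For each pair $(H,K)$, I would compare $K(S_{H,K})$ with the Segal (permutative) $K$-theory of the category of spans from $G/H$ to $G/K$ used to build $\GB$ in \cite{GM1}. The key observation is that $S_{H,K}$ is a Waldhausen category in which every cofibration is split: a morphism in $S_{H,K}$ preserves the retraction to $G/H \times G/K$ on the nose, so any injection $S_+ \hookrightarrow T_+$ acquires a splitting from the retraction data of the target. For such ``split'' Waldhausen categories, Waldhausen's additivity theorem (\cite{waldhausen}, §1.8) identifies $K(S_{H,K}) = \Omega|w \mc S_\sbt S_{H,K}|$ with the group completion of the classifying space of the underlying symmetric monoidal category $(S_{H,K}, \amalg)$. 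Identifying objects $S_+ = S \amalg (G/H \times G/K)$ with their non-basepoint parts $S \to G/H \times G/K$, this symmetric monoidal category is precisely the permutative category of spans from $G/H$ to $G/K$ whose group completion gives $\GB(G/H, G/K)$. This yields a levelwise equivalence of mapping spectra.

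Compatibility with the composition pairings is addressed next. Both $\GB$ and $\GB_{Wald}$ use the same pullback-of-spans construction, and in $\GB_{Wald}$ this pairing is biexact by \myref{def:Wald_cat_of_spans}. Under the comparison of the previous step, biexact pairings between split Waldhausen categories correspond to bilinear (strong symmetric monoidal in each variable) pairings of permutative categories, and the induced maps on $K$-theory spectra agree. Strict associativity and unitality on the nose, guaranteed by the conventions adopted from \cite[Def.~1.2, 1.5]{GM1}, ensures that the resulting collection of equivalences assembles into an equivalence of spectral categories, rather than just pointwise equivalences of mapping spectra.

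The main obstacle is precisely this last step: verifying that the standard comparison between Waldhausen $K$-theory and Segal $K$-theory is sufficiently multiplicatively coherent to intertwine the biexact pullback pairings on both sides. This is a nontrivial multifunctoriality statement, and it is exactly the technical content that Bohmann and Osorno develop in \cite{bohmann_osorno}; I would expect their proof to build a zigzag of spectrally enriched categories interpolating between $\GB$ and $\GB_{Wald}$ via a common refinement in which both the permutative and Waldhausen pairings admit a joint lift, and then check that each step is an equivalence using the split-cofibration property of $S_{H,K}$.
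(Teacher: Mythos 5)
The paper does not prove this statement: it is quoted verbatim as a black-box result from the forthcoming work of Bohmann and Osorno (\cite{bohmann_osorno}), and the authors explicitly flag it as ``a hard and technical result'' used only to translate \myref{thm:guillou_may} into Waldhausen-categorical terms. So there is no in-paper argument to compare against.

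Your sketch is a plausible outline of what such a proof must accomplish, and your diagnosis of the crux is correct: the genuine content is the multiplicative, spectrally-enriched coherence of the comparison between Waldhausen $K$-theory and the permutative/Segal machine used to build $\GB$, not the levelwise equivalence of mapping spectra. A few calibrating remarks. First, the levelwise identification is fine in spirit (every cofibration in $S_{H,K}$ does split, since morphisms fix the base $G/H \times G/K$ pointwise and the complement of the image is again a retractive $G$-set), but the relevant Waldhausen input is the comparison of $\Omega|w\mc S_\sbt \sC|$ with the group completion of $|w\sC|$ for split Waldhausen categories, which in \cite{waldhausen} lives in the ``Relation with the $Q$-construction'' discussion rather than the additivity theorem per se; additivity is the engine but not the statement you want to cite. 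Second, and more importantly, the levelwise equivalences only give homotopy equivalences of spectra, not isomorphisms, so there is no direct functor $\GB_{Wald} \to \GB$; one must build a zigzag of spectral categories through an intermediate in which both machines' outputs live, and check that the pullback pairings lift compatibly through the whole zigzag. You say as much in your last paragraph, and that is exactly the technical heart of \cite{bohmann_osorno}. In short: your outline is consistent with the intended argument and correctly locates the difficulty, but like the paper itself it defers the hard multifunctoriality step to Bohmann--Osorno rather than proving it.
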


Since equivalences of spectral categories induce Quillen equivalences on their module categories \cite[6.1]{schwede_shipley_equivalences}, by \autoref{thm:bohmann_osorno}, it is now enough to show that the spectra $K(R^H_{hf}(X))$ form a module over $\GB_{Wald}$. 
This will follow if we define a ``right action" map of spans on the categories $R^H$,
$$\ast\colon R^H\times S_{H,K} \xrightarrow{ }R^K$$
 such that the action map is a bi-exact functor, and the action is associative and unital.
We will now spell out more explicit categorical conditions that will imply this. 

\begin{prop}\label{our conditions}
Suppose we are given
\begin{enumerate}
\item[1.] a Waldhausen category $R^H$ for each $H \leq G$,
\item[2.] an exact functor $(-*S)\colon R^H \to R^K$ for each retractive span $S_+$ in the category $S_{H,K}$,
\item[3.] a natural transformation of functors $\overline{f}\colon (-*S) \Rightarrow (-*S')$ for each map of retractive spans $f\colon S_+ \to S'_+$,
\end{enumerate}
subject to the conditions
\begin{enumerate}
\item[4.] for fixed $A \in R^H$, the assignment $S_+ \mapsto A * S$ defines a functor $S_{H,K} \to R^K$,
\item[5.] we have $A\ast \emptyset \cong *$ and $(A * S) \vee (A * T) \to A * (S \amalg T)$ is an isomorphism in $R^K$ for all spans $S_+, T_+$ ,
\item[6.] the unit span action $(- * \mathbf{1}_{G/H})\colon R^H \to R^H$ is the identity,
\item[7.] if $(S * T)$ is the horizontal composition of $S$ and $T$ as above then $(- * (S * T)) = ((- * S) * T)$ as functors $R^H \to R^K$, and for maps $f\colon S_+\to S'_+$ and $g\colon T_+\to T'_+$, we have an equality $(\phi\ast f)\ast g  =\phi\ast (f\ast g)$. Here, for $f\colon S_+ \to S'_+$, and for a map $\phi\colon Y\to Y'$ in $R^H$, the map $\phi\ast f$ is defined to be either composite $Y\ast S\to Y'\ast S'$ in the commuting diagram we get from point 3 above: \[ \xymatrix{
Y\ast S \ar[d]_-{\phi\ast S} \ar[r]^-{\overline{f}_Y}& Y\ast S'\ar[d]^-{\phi\ast S'} \\
Y' \ast S \ar[r]^-{\overline{f}_{Y'}} & Y'\ast S'.
}\]
\end{enumerate}
%

\noindent Then the spectra $K(R^H)$ form a module over $\GB_{Wald}$, and therefore, also over $\GB$.
\end{prop}

\begin{proof}
By (5) the functor $S_+ \mapsto A * S$ preserves all sums. Observe that every cofibration $S_+ \to T_+$ is a coproduct of the identity of $S_+$ and the map $\emptyset_+ \to (T-S)_+$. Therefore $A * S \to A * T$ is isomorphic to a sum of the identity of $A * S$ and the inclusion of the zero object $0$ $\to A * (T-S)$ which is a cofibration. Of course, equivalences of spans are isomorphisms, which go to isomorphisms in $R^K$. Therefore the pairing $R^H \times S_{H,K} \to R^K$ is exact in the span variable, and it is exact in the $R^H$ variable by condition 2.

To complete the verification of biexactness, note given an inclusion $S_+ \to T_+$ and a cofibration $A \to B$ in $R^H$, the map $A * T \cup_{A * S} B * S \to B * T$ is a pushout of the map $A * (T-S) \to B * (T-S)$, which is a cofibration because $(T-S)$ acts by an exact functor.

Therefore we have biexact pairings $\ast\colon R^H \times S_{H,K} \to R^K$ with strict associativity and unit. We choose distinguished zero objects $\underline{0}$ for each of the categories $R^H$ and $S^{H,K}$ and apply Waldhausen $K$-theory. We then modify the pairings $*$ to strictly preserve these distinguished zero objects: we set $A * \underline{0} = \underline{0} = \underline{0} * A$ and observe that there is a unique way of extending this modified definition to morphisms, preserving the bifunctoriality of the pairing $*$ along with its strict associativity and unit. By the multifunctoriality of Waldhausen $K$-theory (cf. \cite[6.2]{zakharevich2014category}, \cite[2.6]{blumberg2011derived}), these modified pairings then make the spectra $K(R^H)$ into a module over $\GB_{Wald}$.
\end{proof}

In the next section we show how to give such data for $\mc C^{hH}$ when $\mc C$ is any Waldhausen $G$-category.

\subsection{Categorical transfer maps}\label{transfer_section}
Suppose that $\sC$ is a $G$-category with a chosen sum bifunctor $\oplus$ isomorphic to the categorical coproduct $\amalg$. Since $G$ acts through isomorphisms of categories, it preserves $\oplus$ up to canonical isomorphism. Let $f\colon S \ra T$ be a map of finite $G$-sets. As in the previous section, we assume all of our finite $G$-sets come with a total ordering, which does not have to be preserved by $f$.Note that a finite $G$-set $S$ can be regarded as a category with objects the elements of $S$ and only identity morphisms, so the functor category $\Cat(S, \sC)$ is isomorphic to the $S$-indexed product $\prod_S \sC$. We can define a functor
\[ f_! \colon \Cat(S, \sC)\to  \Cat(T, \sC),\]  on objects by
\[ (f_!F)(t) \colon= \bigoplus_{i \in f^{-1}(t)} F(i),  \] or equivalently,
\[  f_! \colon  \prod_S \sC  \to \prod_T \sC\]
\[ (c_1,\  \dots, c_j) \mapsto \left(   \bigoplus_{i \in f^{-1}(1)} c_i, \ \dots,  \bigoplus_{i \in f^{-1}(k) }c_i
\right), \] where $j=|S|$ and $k=|T|$. The action of $f_!$ on morphisms $F \Rightarrow F'$ is clear because $\bigoplus$ is a functor. Note that each set $f^{-1}(t)$ inherits a total ordering, which we use to define the above sum, although changing the ordering would only change the sum up to a canonical isomorphism. Note also that if $f^{-1}(t)$ is empty, then $(f_!F)(t)$ is a zero object in $\sC$.

The functor $f_!$ is not on the nose equivariant, even if the sum $\oplus$ in $\sC$ commutes with the $G$-action strictly. It is only pseudo-equivariant. When we apply the $\Cat(\tG, -)$, by \autoref{pseudo equiv}, we get an on the nose equivariant functor
\[ f_!\colon \Cat(S \times \tG, \sC) \cong\Cat(\tG, \Cat(S, \sC))  \ra  \Cat(\tG, \Cat(T, \sC))\cong \Cat(T \times \tG, \sC), \]which upon taking $G$-fixed points gives a transfer (or pushforward map) along the map of $G$-sets  $f\colon S \ra T$. We make this more explicit in the following definition.

\begin{df}\label{pbpf}
Let $\sC$ be a $G$-category with coproduct $\oplus$, and let $f\colon S \ra T$ be a map of unbased finite $G$-sets. Define a {\bf pullback (restriction) functor} 
\[ f^*\colon \Cat(T \times \tG, \sC)^G \ra \Cat(S \times \tG,\sC)^G \]
on objects $F\colon T \times \tG \to \sC$ by the formulas
\[ (f^*F)(s,g) = F(f(s),g) \]
\[ (f^*F)(s,g \ra h) = F(f(s),g \ra h) \]
and on maps $\alpha\colon F \Rightarrow F'$ by the formula
\[ (f^*F)(s,g) = F(f(s),g) \overset{\alpha}\to F'(f(s),g) = (f^*F')(s,g). \]
Define a {\bf pushforward (transfer) functor}
\[ f_!\colon \Cat(S \times \tG,\sC)^G \ra \Cat(T \times \tG,\sC)^G \]
on objects by
\[ (f_!F)(t,g) := g\left( \bigoplus_{i \in f^{-1}(g^{-1}t)} F(i,1) \right). \]
To finish defining it on objects and morphisms, we use the canonical isomorphism
\[ g\left( \bigoplus_{i \in f^{-1}(g^{-1}t)} F(i,1) \right) \cong \bigoplus_{i \in f^{-1}(g^{-1}t)} F(gi,g) \cong \bigoplus_{j \in f^{-1}(t)} F(j,g). \]
Under this isomorphism, the morphism $(f_!F)(t,g \ra h)$ is chosen to be the coproduct
\[ \bigoplus_{j \in f^{-1}(t)} F(j,g \ra h) \]
and the morphism $(f_!F)(t,g) \to (f_!F')(t,g)$ induced by a map $\alpha\colon F \Rightarrow F'$ is chosen to be the coproduct
\[ \bigoplus_{j \in f^{-1}(t)} \left( F(j,g) \overset\alpha\to F'(j,g) \right). \]
\end{df}

\begin{remark}\label{fsharprem}
We note the following properties of $f_!$ which we will use later on:
\begin{enumerate}
\item If $f$ is an isomorphism, then $f_!=(f^{-1})^\ast;$
\item If $f=\id$, then $\id_!=\id$;
\item if $f$ and $h$ are composable maps of $G$-sets, $(h f)_!\cong f_!h_!$. 
\end{enumerate}
\end{remark}

\begin{remark}
In the special case where $H$ is a subgroup of $K$ and $f\colon G/H\ra G/K$ is the quotient map, $f_!$ defines a transfer map
\[ \sC^{hH} \ra \sC^{hK}. \]
\end{remark}
\noindent More generally, for a span
\[ \xymatrix @R=1em @C=1em{
& S \ar[ld]_-{p} \ar[rd]^-{q} & \\
G/H && G/K } \]
one can define a functor $(-)*S\colon \sC^{hH} \to \sC^{hK}$ by $q_!p^*$.
To prove that $*$ defines a bifunctor that respects compositions of spans, one needs the following formal properties of $f_!$ and $f^*$ (cf. \cite[\S 10]{Gmonster}).

\begin{prop}\label{adj}
For each equivariant map $f\colon S \to T$ of finite $G$-sets, the functors $(f_!,f^*)$ form an adjoint pair.
\end{prop}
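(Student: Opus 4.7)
My plan is to exhibit the unit and counit of the adjunction explicitly and verify the triangle identities. This is the $G$-equivariant analog of the standard fact that for a map of discrete categories $f: S \to T$, left Kan extension along $f$, given pointwise by $F \mapsto \bigl(t \mapsto \bigoplus_{i \in f^{-1}(t)} F(i)\bigr)$, is left adjoint to precomposition with $f$. The non-equivariant adjunction is well-known; the content here is to lift it through the rectification that produces the on-the-nose equivariant functors of \myref{pbpf}.

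Unraveling the formulas using the $G$-equivariance of $f$ (so $g^{-1}f(s) = f(g^{-1}s)$), one computes
\[
(f^* f_! F)(s, g) \;=\; g \bigoplus_{i \in f^{-1}(f(g^{-1}s))} F(i, 1).
\]
Since $F(s,g) = gF(g^{-1}s,1)$ by the equivariance of $F$, I would define the unit $\eta_F(s, g)$ to be $g$ applied to the coproduct inclusion at the summand $i = g^{-1}s$. Dually, using
\[
(f_! f^* H)(t, g) \;=\; g \bigoplus_{i \in f^{-1}(g^{-1}t)} H(g^{-1}t, 1),
\]
I would define the counit $\epsilon_H(t, g)$ to be $g$ applied to the fold map collapsing all summands onto $H(g^{-1}t, 1)$.

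The bulk of the work is then two verifications. First, that $\eta_F$ and $\epsilon_H$ are morphisms in the respective fixed point categories: $G$-equivariance is encoded directly in the $g$-twist appearing in the formulas, and compatibility with the $\tG$-isomorphisms amounts to a naturality check. Here one uses that by \myref{pbpf} the morphism $(f_! F)(t, g \to h)$ is a coproduct of the morphisms $F(j, g \to h)$, so its interaction with coproduct inclusions and fold maps is the expected one. Second, that the triangle identities hold; these reduce to the universal property of coproducts, namely that a summand inclusion followed by the fold is the identity on that summand. The main obstacle will be carefully bookkeeping the canonical isomorphism $g\bigoplus_i F(i, 1) \cong \bigoplus_i F(gi, g)$ implicit in \myref{pbpf}, and checking that it intertwines the coproduct inclusion at $i = g^{-1}s$ with the inclusion at $gi = s$. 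This should come out cleanly because $g$ acts on $\sC$ through an equivalence of categories, and therefore preserves the coproduct structure up to canonical isomorphism, so these identifications commute with the inclusions and fold maps used above.
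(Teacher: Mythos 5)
Your proposal is correct but takes a different route than the paper. You prove the adjunction by explicitly exhibiting the unit $\eta_F$ and counit $\epsilon_H$ and (in outline) verifying the triangle identities, whereas the paper establishes it via the hom-set characterization: it directly constructs a natural bijection between natural transformations $f_!F \Rightarrow F'$ of $G$-fixed functors on $T \times \tG$ and natural transformations $F \Rightarrow f^*F'$ of $G$-fixed functors on $S \times \tG$, by invoking the universal property of coproducts to unpack a map $\bigoplus_{s \in f^{-1}(t)} F(s,g) \to F'(t,g)$ into a family of maps $F(s,g) \to F'(f(s),g)$. Your computations of $(f^*f_!F)(s,g)$ and $(f_!f^*H)(t,g)$ are accurate, including the use of $g^{-1}f(s) = f(g^{-1}s)$ and the $G$-fixedness identity $F(s,g) = gF(g^{-1}s,1)$, and your definitions of the unit as a twisted summand inclusion and the counit as a twisted fold map are the right ones. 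The hom-set approach is slightly more economical (one bijection rather than two maps plus two identities), but your approach has the advantage of producing explicit formulas for the unit and counit, which are exactly the maps used implicitly in the Beck--Chevalley isomorphism of \myref{beck-chevalley} and the natural transformations $f_\sharp$ of \myref{oreo_cookie}; having them written out would make those later constructions easier to follow. In either approach the real bookkeeping burden is the same: checking that the candidate maps are genuinely morphisms in $\Cat(-\times\tG,\sC)^G$, i.e., compatible with the unique isomorphisms in $\tG$, via the canonical isomorphism $g\bigoplus_i F(i,1) \cong \bigoplus_i F(gi,g)$ built into \myref{pbpf}. You correctly flag this as the main obstacle, and it is handled the same way in both proofs, relying on $g$ acting by an equivalence of categories so that coproducts are preserved up to canonical isomorphism compatible with inclusions and folds.
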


\begin{proof}
Let $F\colon S\times \tG \to \sC$ and $F'\colon T\times \tG \to \sC$. Under the canonical isomorphism from the above definition, each transformation $f_!F \Rightarrow F'$ is given by the data of maps
\[ \bigoplus_{s \in f^{-1}(t)} F(s,g) \ra F'(t,g) \]
for each $t\in T$ and $g \in G$. The universal property of $\oplus$ gives a bijection between such collections of maps and collections of maps 
\[ F(s,g) \ra F'(f(s),g) \]
for each $s\in S$ and $g \in G$. This gives the bijection between transformations $f_!F \Rightarrow F'$ and $F\Rightarrow f^\ast F'$.
\end{proof}

\begin{prop}\label{beck-chevalley}
Given a pullback square of finite $G$-sets
\[ \xymatrix @R=1.2em @C=1.2em{
A \ar[r]^-k \ar[d]_-h & B \ar[d]^-f \\
C \ar[r]_-j & D } \]
there is a ``Beck-Chevalley'' isomorphism
\[ \xymatrix{ \Cat(B \times \tG, \sC)^G \ar@/^1em/[rr]^-{h_!k^*} \ar@/_1em/[rr]_-{j^*f_!} \ar@{}[rr]|-{\Downarrow BC} & & \Cat(C \times \tG, \sC)^G } \]
defined as the composite of unit and counit maps
\[ \xymatrix @R=.4em{ h_!k^* \ar[r]^-\eta & j^*j_!h_!k^* \ar[r]^-\cong & j^*f_!k_!k^* \ar[r]^-\epsilon & j^*f_! .} \]
\end{prop}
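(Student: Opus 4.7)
My plan is to prove the Beck–Chevalley isomorphism by unpacking both sides of the natural transformation pointwise using the explicit formulas in \myref{pbpf}, and then identifying the composite with the canonical isomorphism that arises from the pullback property of the square of $G$-sets.

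First I would compute both functors on an object $F \in \Cat(B \times \tG, \sC)^G$. Using \myref{pbpf} directly, one has
\[ (h_! k^* F)(c,g) \;=\; g\left( \bigoplus_{a \in h^{-1}(g^{-1}c)} F(k(a),1) \right), \qquad (j^* f_! F)(c,g) \;=\; g\left( \bigoplus_{b \in f^{-1}(g^{-1}j(c))} F(b,1) \right), \]
where I use equivariance $j(g^{-1}c) = g^{-1}j(c)$. The pullback property of the square says precisely that for each $c' \in C$ the map $k$ restricts to a bijection $h^{-1}(c') \xrightarrow{\cong} f^{-1}(j(c'))$. Applied with $c' = g^{-1}c$, this gives a canonical reindexing bijection of the two coproducts, and since the summands $F(k(a),1)$ on the left match $F(b,1)$ on the right under $b = k(a)$, we obtain a canonical isomorphism of objects. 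The action on morphisms $(c,g) \to (c,h)$ is, in both cases, the coproduct of the morphisms $F(k(a),g\to h)$ (resp.\ $F(b,g\to h)$), so this canonical isomorphism is natural in $(c,g)$, and natural in $F$ as well.

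Next I would check that the composite $j^*f_! \epsilon \circ j^* BC \circ \eta h_!k^*$ given in the statement produces exactly this canonical pullback–reindexing isomorphism. The unit $\eta \colon \id \Rightarrow j^*j_!$, when evaluated at $h_!k^*F$, is by the proof of \myref{adj} the inclusion of the $C$-summand into the $D$-indexed coproduct of copies indexed by preimages under $j$. The isomorphism $j_!h_! \cong f_!k_!$ holds because both are computed by the single map $f\circ k = j\circ h$, using that the sum formula for $(-)_!$ composes strictly in the map variable (this is condition (7) in the span context, and here follows from the definitional formula $\bigoplus_{i \in (j\circ h)^{-1}(d)}$). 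Finally the counit $\epsilon \colon k_!k^* \Rightarrow \id$ from \myref{adj} is the identity on summands indexed by the image of $k$ and kills the rest. Composing these three, a summand $F(b,1)$ with $b \in f^{-1}(g^{-1}j(c))$ survives precisely when $b$ comes from some $a \in h^{-1}(g^{-1}c)$, which by the pullback property is every such $b$ uniquely. So the composite is the canonical reindexing isomorphism established in the previous paragraph.

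The main obstacle I anticipate is purely bookkeeping: carefully tracking the various canonical isomorphisms (the one in \myref{pbpf} that identifies $g(\bigoplus_i F(i,1))$ with $\bigoplus_i F(gi,g)$, the coherence isomorphisms of $\oplus$, and the explicit forms of $\eta$ and $\epsilon$ from the proof of \myref{adj}) and verifying that they cancel to yield exactly the reindexing along $k\colon h^{-1}(c') \cong f^{-1}(j(c'))$. Once this bookkeeping is organized, the statement reduces to the single nontrivial input — that the square is a pullback of $G$-sets — and the rest is formal.
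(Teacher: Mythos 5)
Your proposal is correct and follows essentially the same route as the paper: compute $h_!k^*F$ and $j^*f_!F$ pointwise from the formulas in \myref{pbpf}, and observe that the pullback square furnishes the bijection $k\colon h^{-1}(c') \congar f^{-1}(j(c'))$ that reindexes the two coproducts. The only real difference is that you spend a second paragraph explicitly tracing the stated $\eta$/$\cong$/$\epsilon$ composite and checking it coincides with the reindexing map, whereas the paper simply asserts that ``unwinding the definitions'' yields the natural transformation sending $F(k(a),1)$ identically to $F(b,1)$ with $b=k(a)$; your extra bookkeeping is a welcome elaboration rather than a different argument. One small caution: the interchange $j_!h_! \cong f_!k_!$ is \emph{not} literally strict from the sum formula (one gets an iterated coproduct $\bigoplus_{c'}\bigoplus_{a}$ versus a single coproduct over $(jh)^{-1}$, identified by the coherence of $\oplus$), so ``composes strictly'' should be softened to ``canonically''; this does not affect the conclusion, since a canonical isomorphism is all the statement requires.
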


\begin{proof}
Unwinding the definitions gives a natural transformation between the two functors on $C \times \tG$ defined by
\[ h_!k^*F(c,g) = g\left( \bigoplus_{a \in h^{-1}(g^{-1}c)} F(k(a),1) \right), \quad
j^*f_!F(c,g) = g\left( \bigoplus_{b \in f^{-1}(g^{-1}j(c))} F(b,1) \right). \]
that sends each $F(k(a),1)$ to the $F(b,1)$ where $b = k(a)$, by an identity map. Since the square is a pullback, $k$ defines a bijection $h^{-1}(c) \to f^{-1}(j(c))$ for all $c \in C$, so this is a natural isomorphism.
\end{proof}

\begin{prop}\label{oreo_cookie}
Each diagram of G-sets
\[ \xymatrix @R=.8em {
& S \ar[dl]_-p \ar[dr]^-q \ar[dd]^-f & \\
U && V \\
& T \ar[ul]^-r \ar[ur]_-s & } \]
induces a natural transformation $f_\sharp\colon q_!p^* \ra s_!r^*$. These natural transformations depend in a functorial way on the maps $f$.
\end{prop}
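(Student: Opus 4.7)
The plan is to define $f_\sharp$ as a whiskered counit of the adjunction $f_! \dashv f^*$ established in Proposition \myref{adj}. Commutativity of the given diagram supplies $p = r\circ f$ and $q = s\circ f$, and direct inspection of the formulas in Definition \myref{pbpf} shows that these lift to strict identifications of composable functors
\[ p^* = f^* r^*, \qquad q_! = s_! f_! \]
(the second after fixing a convention for iterated coproducts, as in Definition \ref{def:Wald_cat_of_spans}). Consequently $q_! p^* = s_! f_! f^* r^*$, and $s_! r^*$ differs from this only by the interior factor $f_! f^*$.

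Letting $\epsilon^f: f_! f^* \Rightarrow \id$ denote the counit from Proposition \myref{adj}, I would then define
\[ f_\sharp \;:=\; s_!\, \epsilon^f\, r^* \;:\; q_! p^* = s_! f_! f^* r^* \;\Longrightarrow\; s_! r^*, \]
that is, the horizontal composition of $\mathrm{id}_{s_!}$, $\epsilon^f$, and $\mathrm{id}_{r^*}$.

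For functoriality in $f$, the identity map on $S$ clearly produces the identity natural transformation, since the counit of the identity adjunction is the identity. For a composable pair $S \xrightarrow{f} T \xrightarrow{g} T'$ of maps compatible with the span structure (i.e.\ both triangles to $U$ and $V$ commute), the identity $(gf)_\sharp = g_\sharp \circ f_\sharp$ reduces, after expanding the whiskered composites and using the strict equalities $(gf)_! = g_! f_!$ and $(gf)^* = f^* g^*$, to the standard decomposition of the counit of a composite adjunction:
\[ \epsilon^{gf} \;=\; \epsilon^g \circ (g_!\, \epsilon^f\, g^*). \]
This last identity is entirely formal and independent of the specifics of Definition \myref{pbpf}.

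The only substantive obstacle is the bookkeeping required to establish the strict equalities $p^* = f^* r^*$, $q_! = s_! f_!$, and the analogous identities at the level of counits. None of this is deep, but it requires committing to a specific convention for ordering the iterated coproducts and tracking the canonical coherence isomorphisms for $\oplus$, much as in the conventions adopted for $S_{H,K}$ in Definition \ref{def:Wald_cat_of_spans}. Once those conventions are fixed, both the construction and its functoriality are a routine consequence of the formal properties of adjunctions.
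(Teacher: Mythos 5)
Your proof takes essentially the same approach as the paper: you use the factorizations $p = r\circ f$, $q = s\circ f$ to rewrite $q_!p^* = s_!f_!f^*r^*$ and define $f_\sharp$ as the whiskered counit $s_!\,\epsilon^f\,r^*$, then deduce functoriality from the fact that the counit of a composite adjunction is the composite of the counits. Your version is more explicit about the bookkeeping and whiskering, but the construction and the key observation are identical.
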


\begin{proof}
\[ \xymatrix @R=.8em {
& \Cat(S \times \tG, \sC)^G \ar@{<-}[dl]_-{p^*} \ar[dr]^-{q_!} \ar@/^/[dd]^-{f_!} & \\
\Cat(U \times \tG, \sC)^G && \Cat(V \times \tG, \sC)^G. \\
& \Cat(T \times \tG, \sC)^G \ar@{<-}[ul]^-{r^*} \ar[ur]_-{s_!} \ar@/^/[uu]^-{f^*} & } \]
The identities $p = rf$, $q = sf$ and the counit of $(f_!,f^*)$ gives a natural transformation
\[ q_!p^* \cong  s_!f_!f^*r^* \overset\epsilon\ra s_!r^* \]
which we take as the definition of $f_\sharp$. Functoriality follows from an easy diagram chase. 
\end{proof}

We conclude this section by checking that for each Waldhausen $G$-category $\mc C$, the action of spans on the categories $\mc C^{hH}$ extends to an action of the categories of retractive spans $S_{H,K}$, giving a spectral Mackey functor in the sense of the previous section. As mentioned earlier, this argument is a strictified analog of the unfurling construction of \cite[\S 11]{Gmonster}.

\begin{prop}\label{clark}(cf. \cite[8.1]{Gmonster2})
Let $\sC$ be a Waldhausen $G$-category. Then the collection of spectra $K(\sC^{hH})$ may be modified up to equivalence to form a module over $\GB$.
\end{prop}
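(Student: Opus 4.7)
The plan is to verify the seven conditions of \myref{our conditions} with $R^H := \sC^{hH}$, using the transfer apparatus built in \S\ref{transfer_section}. Condition (1), that $\sC^{hH}$ is Waldhausen, is \myref{htpyfixedpts}. I define the action of a retractive span $S_+ = S \amalg (G/H \times G/K) \in S_{H,K}$, with underlying projections $p\colon S \to G/H$ and $q\colon S \to G/K$, by $(-\ast S_+) := q_! p^*$ in the sense of \myref{pbpf}, after the identification $\Cat(G/H \times \tG, \sC)^G \cong \sC^{hH}$. The retract summand $G/H \times G/K$ represents the zero object of $S_{H,K}$ and does not contribute to the action.

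For condition (2) I will check that $p^*$ preserves cofibrations, weak equivalences, the zero object, and pushouts along cofibrations because these are all defined pointwise on $\Cat(S \times \tG, \sC)^G$, and that $q_!$ preserves them because it is a twisted finite coproduct (the $G$-action on $\sC$ is through exact functors) and is a left adjoint by \myref{adj}. Condition (3) is immediate from \myref{oreo_cookie}, with the caveat that a general morphism of $S_{H,K}$ may send some elements of $S$ into the retract part of $T_+$; those components factor through the zero object, hence contribute the zero map. Condition (4) then follows from the functoriality clause in \myref{oreo_cookie}. For condition (5), the coproduct formula in \myref{pbpf} shows the empty span gives the empty coproduct, namely the zero object, and the fiber decomposition $(q \amalg q')^{-1}(t) = q^{-1}(t) \amalg (q')^{-1}(t)$ yields the natural isomorphism $(A \ast S) \vee (A \ast T) \cong A \ast (S \amalg T)$. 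Condition (6) holds since the unit span has $p = q = \id$, so $q_! p^* = \id$.

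The technical heart is condition (7), strict associativity. For composable spans $S \in S_{H,L}$ with projections $p_S, q_S$ and $T \in S_{L,K}$ with projections $p_T, q_T$, whose composite $S \ast T = S \times_{G/L} T$ has pullback projections $\pi_S\colon S \ast T \to S$ and $\pi_T\colon S \ast T \to T$, one has $(-\ast S) \ast T = q_{T,!} p_T^* q_{S,!} p_S^*$ while $-\ast (S \ast T) = (q_T \pi_T)_! (p_S \pi_S)^*$. Condition (7) thus reduces to the identity $p_T^* q_{S,!} = (\pi_T)_! (\pi_S)^*$, which is the content of the Beck--Chevalley isomorphism of \myref{beck-chevalley}. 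The difficulty is that \myref{beck-chevalley} is a priori only a canonical natural isomorphism between coproducts indexed by sets that biject via $\pi_S$. The plan is to invoke the set-theoretic conventions imposed in \ref{def:Wald_cat_of_spans} from \cite{GM1}: by defining the pullback $S \ast T$ as a concrete subset of $S \times T$ and choosing the indexing of the coproducts in \myref{pbpf} accordingly, the Beck--Chevalley map becomes the identity between coproducts taken over the same set. Once condition (7) is pinned down this way, \myref{our conditions} delivers a $\GB_{Wald}$-module structure on $\{K(\sC^{hH})\}_{H \leq G}$, hence a $\GB$-module structure via \myref{thm:bohmann_osorno}.
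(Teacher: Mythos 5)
Your overall strategy matches the paper's: verify the seven conditions of \myref{our conditions} with $R^H = \sC^{hH}$, taking $(-\ast S_+) = q_!p^*$ via \myref{pbpf}. Conditions (1) through (5) are handled essentially as in the paper, and your direct argument for (6) is fine. The problem is condition (7), and the gap is serious enough that your plan as stated cannot be carried out.

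Your reduction of (7) to the identity $p_T^* q_{S,!} = (\pi_T)_! (\pi_S)^*$ tacitly also uses the strict functoriality of the transfer, namely $(q_T\pi_T)_! = q_{T,!}(\pi_T)_!$. That equality fails: $(q_T\pi_T)_!$ produces the ``flat'' iterated coproduct $g\bigl(\bigoplus_{u \in (q_T\pi_T)^{-1}(g^{-1}k)} F(u,1)\bigr)$, while $q_{T,!}(\pi_T)_!$ produces the nested coproduct $g\bigl(\bigoplus_{t}\bigoplus_{u\in\pi_T^{-1}(t)}F(u,1)\bigr)$, and these differ on the nose because the chosen bifunctor $\oplus$ in $\sC$ is not assumed to be strictly associative (it is merely isomorphic to the categorical coproduct). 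For the same reason your hope that the set-theoretic conventions of \cite{GM1} force the Beck--Chevalley map of \myref{beck-chevalley} to be an identity does not hold up: that map is a rewriting of a coproduct along a bijection of indexing sets (subsets of $S\ast T$ versus subsets of $S$), and the paper explicitly records it only as a natural isomorphism. So conditions (7) as stated cannot be satisfied verbatim by $q_!p^*$ on $R^H$ itself, no matter how the pullback sets are coordinatized.

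The paper resolves this by replacing $R^H$ with an equivalent category $\underline{R}^H$ whose objects are triples $(Y, K, S_+)$ recording a span ``history.'' On objects the action of $T_+$ is simply span composition $(K,Y,S_+)\mapsto (K,Y,(S*T)_+)$, which is strictly associative and unital by the combinatorial conventions; on morphisms one uses the Beck--Chevalley isomorphisms (and their compatibility with pasting of pullback squares) in an essential way, rather than attempting to make them identities. You should incorporate this or an equivalent strictification device; without it, conditions (6)--(7) for $R^H=\sC^{hH}$ do not hold as strict equalities of functors.
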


\begin{proof} By \autoref{our conditions}, it suffices to check the following seven points.

\textbf{1.} Set $R^H = \Cat(G/H \times \tG,\mc C)^G \cong \mc C^{hH}$. Recall that this is a Waldhausen category by \autoref{htpyfixedpts}. In order to make the action of  spans strictly associative and unital in steps 6 and 7, we need to thicken this category in the following way. Define a new category $\underline{R}^H$ whose objects are triples $(J, Y,(S_+,p,q))$, where $J \leq G$, $Y$ is an object of $R^J = \Cat(G/J\times\tG,\mc C)^G$, and $S_+$ is a retractive span from $G/J$ to $G/H$ (though we exclude the unit $\mathbf{1}_{G/H}$). To each such triple $(J,Y,S_+)$ we can assign the object $Y * S = q_!p^*Y$ of $R^H$. Then we define the morphisms in $\underline{R}^H$ as
\[ \underline{R}^H((J,Y,S_+),(J',Y',S_+')) \colon= R^H(Y * S, Y' * S'). \]
There is an essentially surjective functor $ \underline{R}^H\to R^H$ which sends an object $(J, Y,(S_+,p,q))$ to $Y * S$. By definition of the morphisms in $\underline{R}^H$, this functor is full and faithful, thus $\underline{R}^H \to R^H$ is an equivalence of categories. We lift the Waldhausen structure of $R^H$ to $\underline{R}^H$ along this equivalence.\\

\textbf{2.} Each span $T_+ \in S_{H,K}$ with maps $G/H \overset{r}\leftarrow T \overset{s}\rightarrow G/K$ defines a functor $(-)*T\colon R^H \to R^K$ by $s_!r^*$. Exactness of $(-)*T$ follows because the coproduct $\oplus$ in $\mc C$ commutes with colimits and preserves both cofibrations and weak equivalences.
On the thickened categories, we define the action map $(-)*T\colon \underline{R}^H \to \underline{R}^K$ on objects by
\[ (J,Y,S_+) * T_+ \colon= (J,Y,(S * T)_+). \] 
In order to extend this definition of the action map on morphisms, recall that the Beck-Chevalley isomorphism for the pullback square in the diagram below gives an isomorphism $Y * (S * T) \cong (Y * S) * T$ of objects of $R^K$:
\[ \xymatrix @R=1em @C=1em{
&& S * T \ar[ld] \ar[rd] & \\
& S \ar[ld]_-{p} \ar[rd]^-{q} && T \ar[ld]_-{r} \ar[rd]^-{s} & \\
G/J && G/H && G/K. } \]
Using these isomorphisms and the isomorphisms from \autoref{fsharprem}, part (3), we define the action of $T$ on a morphism in $\underline{R}^H$. Each morphism $\underline{\phi}\colon (J,Y,S_+) \to (J',Y',S'_+)$ is represented by a morphism $\phi\colon Y * S \to Y' * S'$ in $R^H$. We take it to the composite
\begin{equation}\label{beck_chevalley_sandwich}
\xymatrix{ 
Y * (S * T) \ar[r]^-\cong & (Y * S) * T \ar[d]^-{\phi * T = s_!r^*(\phi)} \\
Y' * (S' * T) \ar[r]^-\cong & (Y' * S') * T .}
\end{equation}
By definition this gives a morphism $$\underline{\phi} * T\colon (J,Y,(S * T)_+) \to (J',Y',(S' * T)_+).$$ By pasting two diagrams of the form \eqref{beck_chevalley_sandwich} together, we see this respects composition and units, and so defines a functor $\underline{R}^H \to \underline{R}^K$.  Finally, when $H = K$ we define $\mathbf{1}_{G/H}$ to act as the identity functor of $\underline{R}^H$. Note that by construction, the diagram
\[\xymatrix @R=3em @C=3em{
\underline{R}^H \ar[d] \ar[r]^-{(-)\ast T} & \underline{R}^K\ar[d] \\
R^H \ar[r]_-{(-)\ast T} & R^K 
}\] commutes up to natural isomorphism. Moreover, the top map is exact because the bottom one is.\\


\textbf{3.} Given a map of retractive spans
\[ \xymatrix  @R=.8em @C=4.5em{
& T \amalg (G/H \times G/K) \ar[dl]_-{r \amalg \pi_1} \ar[dr]^-{s \amalg \pi_2} \ar[dd]^-f & \\
G/H && G/K \\
& T' \amalg (G/H \times G/K) \ar[ul]^-{r' \amalg \pi_1} \ar[ur]_-{s' \amalg \pi_2} & } \]
we recognize canonical isomorphisms
\[ (s \amalg \pi_2)_!(r \amalg \pi_1)^* \cong s_!r^* \vee (\pi_2)_!\pi_1^*. \]
We can then define the component of the natural transformation $\overline{f}_Y\colon Y\ast T\to Y\ast T' $ to be the summand of $f_\sharp$ from \autoref{oreo_cookie} taking $s_!r^*$ to $s'_!r'^*$. Note that because $f$ restricts to the identity of $G/H \times G/K$, we have the commuting diagram
\[ \xymatrix{
(\pi_2)_!\pi_1^* \ar[r] \ar@{=}[d] & s_!r^* \vee (\pi_2)_!\pi_1^* \ar[r] \ar[d]^-{f_\sharp} & s_!r^* \ar[d]^-{\overline{f}} \\
(\pi_2)_!\pi_1^* \ar[r] & s'_!r'^* \vee (\pi_2)_!\pi_1^* \ar[r] & s'_!r'^*.
} \]
For each $(J,Y,S_+)$ in the thickening $\underline{R}^H$, this defines a map in $R^K$ from $(Y * S) * T$ to $(Y * S) * T'$. We use the Beck-Chevalley isomorphisms as in \eqref{beck_chevalley_sandwich} to lift this to a map $\overline f$ in $\underline{R}^K$. The verification that $\overline f$ is a natural transformation of functors $\underline{R}^H \to \underline{R}^K$ quickly reduces to $R^H \to R^K$, which can be proven using the diagram just above.

To handle the case where one of $T$ or $T'$ is the unit $\mathbf{1}_{G/H}$, we use the canonical isomorphism $Y * S \cong (Y * S) * G/H$ in the place of the Beck-Chevalley isomorphism in the diagram \eqref{beck_chevalley_sandwich}.\\

\textbf{4.} As in the previous point, the claim that the maps $\overline{f}$ respect composition on the categories $\underline{R}^H$ quickly reduces to the categories $R^H$. Given two maps of spans
\[ \xymatrix @R=1.8em @C=6em{
& S \amalg (G/H \times G/K) \ar[dl]_-{p \amalg \pi_1} \ar[dr]^-{q \amalg \pi_2} \ar[d]^-f & \\
G/H & T \amalg (G/H \times G/K) \ar[l]_{r \amalg \pi_1} \ar[r]^{s \amalg \pi_2} \ar[d]^-h & G/K,  \\
& U \amalg (G/H \times G/K) \ar[ul]^-{m \amalg \pi_1} \ar[ur]_-{n \amalg \pi_2} & } \]
\autoref{oreo_cookie} tells us that $(hf)_\sharp = h_\sharp f_\sharp$. A simple chase of the diagram below confirms that $\overline{hf} = \overline{h} \circ \overline{f}$:
\[ \xymatrix{
q_!p^* \vee (\pi_2)_!\pi_1^* \ar[r] \ar[d]^-{f_\sharp} \ar@/_4em/[dd]_-{(hf)_\sharp} & q_!p^* \ar[d]^-{\overline{f}} \\
s_!r^* \vee (\pi_2)_!\pi_1^* \ar[r] \ar[d]^-{h_\sharp} & s_!r^* \ar[d]^-{\overline{h}} \\
m_!n^* \vee (\pi_2)_!\pi_1^* \ar[r] & m_!n^*.
} \]
\\

\textbf{5.} Again the claim is equally true for $\underline{R}^H$ and $R^H$. If $\iota$ is the inclusion of the empty set then $\iota_!$ always gives a zero object. The isomorphism $A * S \vee A * T \to A * (S \amalg T)$ is immediate from the definition of the transfer $q_!$.\\

\textbf{6.} This is automatic from the definition in point 2 above.\\

\textbf{7.}  We note that this property holds only for $\underline{R}^H$, not $R^H$; the action of $S_{H,K}$ on the objects of $R^H$ is not strictly associative. However, the action of the objects of $S_{H,K}$ on the objects of $\underline{R}^H$ is associative because we chose a model for spans whose compositions were strictly associative.  The morphisms are more subtle. If we have $\underline{\phi}\colon (J,Y,S_+) \to (J',Y',S'_+)$ and maps of spans $f\colon T \to T'$ and $g\colon U \to U'$, we need to show that we have an equality of maps
\[ (\underline\phi * f) * g = \underline\phi * (f * g)\colon (J,Y,(S*T*U)_+) \ra (J',Y',(S'*T'*U')_+). \]
Once we prove this, associativity on morphisms will also hold automatically in the case where one or more of $T$, $T'$, $U$, or $U'$ is a strict unit $\mathbf{1}_{G/-}$.

\[ \xymatrix @R=1em @!C=1em{
&&& S * T * U \ar[ld] \ar[rd] && \\
&& S * T \ar[ld] \ar[rd] && T * U \ar[ld] \ar[rd] & \\
& S \ar[ld] \ar[rd] && T \ar[ld] \ar[rd] && U \ar[ld] \ar[rd] & \\
G/J && G/H && G/K && G/L } \]
Consider the diagram below, in which the horizontal maps are Beck-Chevalley isomorphisms arising from the pullback squares in the diagram above. 
\[ \xymatrix@R=2em @C=0.9em{
Y * (S * T * U) \ar[r]^-\cong \ar@{-->}[d] & (Y * (S * T)) * U \ar[r]^-\cong & ((Y * S) * T) * U \ar[d]_-{(\phi * f) * g} & (Y * S) * (T * U) \ar[l]_-\cong \ar[d]^-{\phi * (f * g)} \\
Y' * (S' * T' * U') \ar[r]^-\cong & (Y' * (S' * T')) * U' \ar[r]^-\cong & ((Y' * S') * T') * U' & (Y' * S') * (T' * U') \ar[l]_-\cong
} \]
If the dotted map is chosen to make the left-hand rectangle commute, then it defines $(\underline{\phi} * f) * g$. The composite along the entire top row is a Beck-Chevalley map, by the standard fact that they agree along pasting pullback squares. Therefore if the dotted map is chosen to make the outside rectangle commute it defines $\underline{\phi} * (f * g)$.

It therefore suffices to prove that the right-hand square commutes. We expand it in the following way, where $X=Y\ast S$, $X'=Y'\ast S'$, and the vertical maps are Beck-Chevalley isomorphisms:
\[\xymatrix@R=2em @C=1.7em{
X\ast (T\ast U) \ar[rr]^-{\phi\ast (T\ast  U)} \ar[d]_-\cong && X' \ast (T\ast U) \ar@/^1em/[rrrr]^-{\overline{(f\ast g)}_{X'}} \ar[rr]_-{\overline{(1*g)}_{X'}} \ar[d]^-\cong && X' \ast (T\ast U') \ar[d]^-\cong \ar[rr]_-{\overline{(f*1)}_{X'}} && X'\ast(T'\ast U') \ar[d]^-\cong \\
(X\ast T)\ast U \ar[rr]_-{(\phi\ast T)\ast U} && (X'\ast T)\ast U \ar[rr]_-{\overline{g}_{(X'\ast T)}} && (X'\ast T)\ast U' \ar[rr]_-{\overline{f}_{X'}\ast U'} && (X'\ast T')\ast U' \\
}\]
The left-hand square commutes by the naturality of the Beck-Chevalley isomorphism. Each of the last squares is proven formally by a long diagram-chase, or more easily by writing the explicit formula for the two natural transformations and verifying that they are the same direct sum of identity maps and zero maps.

\end{proof}

\subsection{Construction of $\bA_G(X)$}\label{sec:final}

By the previous section, the spectra $K(R_{hf}(X)^{hH})$ form a spectral Mackey functor. To construct $\bA_G(X)$ we simply need to check that the structure thus defined on $R_{hf}(X)^{hH}$ respects the subcategory of retractive $H$-cell complexes and the equivariant weak equivalences and cofibrations between them.

\begin{df} Let $R^H(X)$ be the category of 
$H$-equivariant retractive spaces $Y$ over $X$ with $H$-equivariant inclusion $i_Y$ and retraction $p_Y$. The morphisms are $H$-equivariant maps between these.
The weak equivalences are those inducing weak equivalences rel $X$ on the fixed points for all subgroups of $H$. The cofibrations are the maps $Y \ra Z$ with the $H$-equivariant FHEP: there is an $H$-equivariant, fiberwise retract
\[ Z \times I \ra Y \times I \cup_{Y \times 1} Z \times 1. \]
In particular, when $L \leq H$, the $L$-fixed points of a cofibration are a cofibration in $R(X^L)$. Finally, let $R^H_{hf}(X)$ be the subcategory of objects which are 
retracts in the homotopy category of $R^H(X)$ of finite relative $H$-cell complexes $X \ra Y$. 
\end{df}

\begin{remark}
	As categories, we have an equivalence $R^H(X) \simeq R(X)^{hH}$ by \autoref{rxhh}. But $R^H(X)$ has fewer cofibrations and weak equivalences. The subcategories $R^H_{hf}(X)$ and $R_{hf}(X)^{hH}$ are also distinct -- the first one is defined using finite $H$-cell complexes, the second defined using spaces whose underlying nonequivariant space is a finite cell complex. These differences are the reason why $\bA_G(X)$ and $\bA_G^{\textup{coarse}}(X)$ are not equivalent.
\end{remark}

We want to define an action of spans on $R^H_{hf}(X)$. From the previous section, each span $S$ over $G/H$ and $G/K$ already acts on the larger category $R(X)^{hH}$. It therefore suffices to check that these actions are exact with respect to the more refined Waldhausen structure coming from $R^H(X)$, and preserve the more restrictive finiteness condition that defines $R^H_{hf}(X)$.

\begin{prop}\label{all the actual work}
The functor $(-*S)\colon R(X)^{hH} \to R(X)^{hK}$ restricts to an exact functor $R^H_{hf}(X) \to R^K_{hf}(X)$.
\end{prop}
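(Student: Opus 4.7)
The plan is to give an explicit description of $(-)*S = q_!p^*$ in classical terms, as restriction followed by induction on equivariant retractive spaces, and then to verify the exactness conditions on that model. First I would decompose the span $S$ as a $G$-set into orbits $S = \coprod_i G/L_i$, with maps $p_i\colon G/L_i \to G/H$ and $q_i\colon G/L_i \to G/K$ specified by cosets $g_iH$ and $h_iK$, i.e., by containments $L_i \leq g_iHg_i^{-1}$ and $L_i \leq h_iKh_i^{-1}$. Since $\oplus$ in $R(X)$ is the wedge along $X$, unwinding \myref{pbpf} shows that $q_!$ takes coproducts in the $S$-variable to wedges of retractive $K$-spaces, so it is enough to treat a single orbit $S = G/L$.

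For a single orbit, I would check that $p^*Y$ is just the underlying space of $Y$ equipped with the $L$-action obtained from the $H$-action by restriction along the chosen conjugation, and that $q_!Z$ for an $L$-equivariant retractive space $Z$ over $X$ is the induced $K$-space $\bigvee_{K/L} Z$ (wedge along $X$), with $K$ permuting the summands and acting on each by the conjugated $L$-action. This matches the formula $(f_!F)(t,g) = g\bigl(\bigoplus_{i \in f^{-1}(g^{-1}t)} F(i,1)\bigr)$ once the canonical coherence isomorphisms $g(A\oplus B)\cong gA \oplus gB$ are tracked.

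With this description in hand, the four properties needed for exactness reduce to routine facts. \textbf{(i)} \emph{Cell complexes and homotopy finiteness:} the restriction of an $H/L'\times D^n$-cell is a finite disjoint union of $L/L''\times D^n$-cells, and the induction of an $L/L''\times D^n$-cell is a single $K/L''\times D^n$-cell; finite relative cell complexes therefore go to finite relative cell complexes, and retracts in the homotopy category of such go to the same, preserving homotopy finiteness. \textbf{(ii)} \emph{Zero object and pushouts along cofibrations:} $p^*$ is computed pointwise and $q_!$ is a left adjoint by \myref{adj}, so both preserve all small colimits. \textbf{(iii)} \emph{Equivariant FHEP cofibrations:} restriction of an equivariant fiberwise retraction $Z\times I \to Y\times I \cup_{Y\times 1} Z\times 1$ is again an equivariant retraction of the same form for $L$, and applying $\bigvee_{K/L}(-)$ to such a retraction yields a $K$-equivariant retraction, so FHEP cofibrations are preserved. \textbf{(iv)} \emph{Equivariant weak equivalences:} for each $L'\leq K$, the $L'$-fixed points of $\bigvee_{K/L} Y$ over $X^{L'}$ split as a wedge over double cosets $[g]\in L'\backslash K/L$ of the fixed points $Y^{g^{-1}L'g\cap L}$; since each $g^{-1}L'g\cap L$ is a subgroup of $L$ and hence, after conjugation, a subgroup of $H$, a map of $H$-spaces that is a weak equivalence on all fixed points for subgroups of $H$ is sent by $q_!p^*$ to a map of $K$-spaces that is a weak equivalence on all fixed points for subgroups of $K$.

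The main obstacle will be the conjugation bookkeeping in the first step: matching the abstract formula of \myref{pbpf} to the classical restriction-and-induction picture requires careful tracking of which coset representatives twist which actions, and of the coherence isomorphisms coming from \myref{pseudo equiv}. Once this dictionary is written down, each of (i)--(iv) is standard, and putting them together yields the claimed exact functor $R^H_{hf}(X) \to R^K_{hf}(X)$.
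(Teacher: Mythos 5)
Your proposal matches the paper's approach: reduce to a single orbit $S \cong G/L$, identify $p^*$ with restriction of the group action along a subconjugation and $q_!$ with fiberwise induction (the pushout of $K \times_L Y \leftarrow K \times_L X \to X$), and then verify exactness directly; the paper's proof supplies exactly the conjugation bookkeeping you flag as the main obstacle, by splitting the single-orbit case into three pieces (restriction to a subgroup $L \leq H$, pullback along an isomorphism $G/L \cong G/L'$ for conjugate $L, L'$, and pushforward along $G/L \to G/K$ for $L \leq K$) and carefully chasing the $\tG$ formalism in the middle case to show the pullback along an isomorphism is the conjugation functor. One imprecision in your step (iv): the $L'$-fixed points of the induced space are not a wedge over all of $L'\backslash K/L$ of $Y^{g^{-1}L'g \cap L}$; a double coset $[g]$ contributes a nonempty summand only when $L' \leq gLg^{-1}$ (equivalently when $gL \in (K/L)^{L'}$), in which case $g^{-1}L'g \cap L = g^{-1}L'g$, and the paper uses the cleaner formula $(K\times_L Y)^{L'} \cong \coprod_{\{kL\,:\,k^{-1}L'k \leq L\}} Y^{k^{-1}L'k}$. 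This does not affect your conclusion, since every summand that actually appears is of the form $Y^M$ for some $M \leq L$, and such fixed points are preserved by equivariant weak equivalences.
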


\begin{proof}
Let \[ \xymatrix @R=1em @C=1em{
& S \ar[ld]_-{p} \ar[rd]^-{q} & \\
G/H && G/K } \] be a given span. From the definition of $q_!p^\ast$ it is clear that up to isomorphism the resulting retractive space over $X$ is a coproduct of the spaces one would get from considering each orbit of $S$ separately. Therefore, without loss of generality, we assume $S \cong G/L$. Recall that the $G$-maps $p\colon G/L\to G/H$ and $q\colon G/L\to G/K$ exist if and only if  $L$ is subconjugate to $H$ and $K$, i.e., they are composites of subgroup inclusions and isomorphisms. Also, recall from \autoref{fsharprem} (1) that if $f$ is an isomorphism, then $f_!=(f^{-1})^\ast$. So it is enough to show:

\begin{enumerate}
\item if $L \leq H$ is a subgroup, the pullback of $p \colon G/L\to G/H$  gives an exact functor $p^\ast\colon R_{hf}^H(X) \ra R_{hf}^L(X)$,
\item if $L$ and $L'$ are conjugate by $L' = gLg^{-1}$ the pullback of the isomorphism $f\colon G/L \xrightarrow{\cong} G/L'$ gives and exact functor $f^\ast\colon R_{hf}^{L'}(X) \ra R_{hf}^{L}(X)$, 
\item if $L \leq K$ is a subgroup, the pushforward of $q\colon G/L\to G/K$ gives an exact functor $q_!\colon R_{hf}^{L}(X) \ra R_{hf}^K(X)$.
\end{enumerate}

\noindent To show (1), suppose $L \leq H$, and let $p\colon G/L\to G/H$ be the map $gL\mapsto gH$. It will suffice to describe a functor $p^\ast\colon R^H(X) \to R^L(X)$ making the square of functors
\[ \xymatrix{
	R^H(X) \ar@{<-}[d]_-\sim \ar@{-->}[r]^-{p^\ast} & R^L(X) \ar@{<-}[d]^-\sim \\
	Cat(G/H \times \tG,R(X))^G \ar[r]^-{p^\ast} & Cat(G/L \times \tG,R(X))^G
} \]
commute up to isomorphism, and show that this $p^\ast$ is exact and preserves the finite complexes. An $H$-equivariant retractive space $(Y,i_Y,p_Y)$ in the top-left comes from a $G$-equivariant functor $F\colon G/H\times \tG\to R(X)$ for which $F(eH,e) = (Y,i_Y,p_Y)$ and $\phi_h$ is the action of $h^{-1}$. When this is restricted to $G/L \times \tG$, it sends $(eL,e)$ to $(Y,i_Y,p_Y)$ and $\phi_{\ell}$ is the action of $\ell^{-1}$. Clearly, we can set $p^\ast$ to be the functor that restricts the $H$-action to the action of $L$, and this makes the above square commute (on the nose). Now we can easily see that this preserves the cofibrations and weak equivalences in $R^H$. Since all groups are finite, it also preserves finite complexes, so it respects the subcategories $R^H_{hf}(X)$ and $R^L_{hf}(X)$.

For (2) consider $L' = gLg^{-1}$, where we fix a choice of such $g$ from all the choices related by conjugation in $L$. Let $f\colon G/L \congar G/L'$ be the isomorphism of $G$-sets given by $hL \mapsto hg^{-1}L'$.  As before, we choose a functor $f^\ast$ making the diagram
\[ \xymatrix{
	R^{L'}(X) \ar@{<-}[d]_-\sim \ar@{-->}[r]^-{f^\ast} & R^L(X) \ar@{<-}[d]^-\sim \\
	Cat(G/L' \times \tG,R(X))^G \ar[r]^-{f^\ast} & Cat(G/L \times \tG,R(X))^G
} \]
commute up to isomorphism. We choose the functor that sends the $L'$-equivariant retractive space $(Y,i_Y,p_Y)$ to the retractive space $(Y,i_Y \circ g,g^{-1} \circ p_Y)$, with each element $\ell \in L$ acting on $Y$ by the given action of $g\ell g^{-1} \in L'$. The commuting diagram
\[ \xymatrix{
	X \ar[r]^-g \ar[d]^-{\ell} & X \ar[r]^{i_Y} \ar[d]^-{g\ell g^{-1}} & Y \ar[r]^{p_Y} \ar[d]^-{g\ell g^{-1}} & X \ar[r]^-{g^{-1}} \ar[d]^-{g\ell g^{-1}} & X  \ar[d]^-{\ell} \\
	X \ar[r]_-g & X \ar[r]_{i_Y} & Y \ar[r]_{p_Y} & X \ar[r]_-{g^{-1}} & X
} \]
demonstrates that this action indeed respects the existing action of $L \leq G$ on $X$. This clearly gives a functor that preserves cofibrations, weak equivalences and finite cell complexes.

It suffices to show that this definition of $f^\ast$ agrees with the original one along the above equivalences of categories. To do this we first modify the original $f^\ast$ up to isomorphism.
We observe that the map $-g^{-1}\colon \ti G \ra \ti G$ that multiplies on the right by $g^{-1}$ is a $G$-equivariant isomorphism of categories, and that any $G$-equivariant functor $\Phi\colon \ti G \ra \mathcal C$ is $G$-equivariantly isomorphic to $\bar{\Phi} = \Phi \circ -g^{-1}$. The components of the natural transformation that give the isomorphism $\Phi\Rightarrow \bar{\Phi} $ are just $\Phi$ applied to the unique isomorphisms $g_0 \xrightarrow{\cong} g_0g^{-1}$ in $\tG$.  

Replace the original $f^\ast$ by the composition of this operation and $f^\ast$. Then, if we start with a functor $F\in \Cat(G/L' \times \tG,R(X))^G$ whose image in $R^{L'}(X)$ is $(Y,i_Y,p_Y)$, this modified pullback of $F$ gives the retractive space
$$(f^\ast \bar{F})(eL, e)=(f^\ast F)(eL, g^{-1})=F(g^{-1}L', g^{-1})=g^{-1}F(eL', e).$$
which is precisely $(Y,i_Y \circ g,g^{-1} \circ p_Y).$ The action of $\ell \in L$ given by the morphism

$$(f^\ast \bar{F})(eL, \ell \ra e)=(f^\ast F)(eL, \ell g^{-1} \ra g^{-1})=F(g^{-1}L', \ell g^{-1} \ra g^{-1})$$
$$=g^{-1}F(eL', g\ell g^{-1} \ra e).$$
Recalling that the $g^{-1}$ on the outside acts trivially on the map on $Y$, this morphism must be $\phi_{g\ell g^{-1}}^{-1}$, in other words the original action of $g\ell g^{-1}$ on $Y$. Therefore our square of functors relating the two definitions of $f^\ast$ commutes strictly (after we modified the bottom map up to isomorphism). Note that different choices of $g$ in this argument produce isomorphic functors, so $f^\ast$ is isomorphic to any of the exact functors obtained by any initial choice of $g$ with the property that $L' = gLg^{-1}$.

Finally, for (3) consider $L \leq K$. Since the pushforward along $q\colon G/L \ra G/K$ is the left adjoint to the pullback, and left adjoints are unique up to natural isomorphism, it must induce on $R^L(X) \ra R^K(X)$ the left adjoint to the forgetful functor $q^\ast$ which restricts the group action from $K$ to $L$. On each retractive $L$-equivariant space $Y$, this left adjoint $q_!Y$ is naturally isomorphic to the pushout
\[ \xymatrix{
K \times_L Y \ar[r] & q_!Y \\
K \times_L X \ar[r] \ar[u] & X. \ar[u] } \]
\noindent We recall that if $H \leq K$ then the $H$-fixed points of $K \times_L Y$ can be computed as
\[ (K \times_L Y)^H \cong \coprod_{\{kL\in K/L \ | \  k^{-1} H k \ \leq \ L\}} Y^{k^{-1} H k}. \]

Since fixed points commute with pushouts along a closed inclusion, we get the pushout square
\[ \xymatrix{
\coprod_{\{kL\in K/L \ | \  k^{-1} H k \ \leq \ L\}} Y^{k^{-1} H k} \ar[r] & (q_!Y)^H \\
\coprod_{\{kL\in K/L \ | \  k^{-1} H k \ \leq \ L\}} X^{k^{-1} H k} \ar[r] \ar[u] & X^H. \ar[u] } \]
From this it is clear that if $Y \ra Z$ is a map of $L$-spaces giving an equivalence on all fixed points, it induces an equivalence of pushouts. Similarly, these constructions all commute up to isomorphism with mapping cylinder, so this construction preserves cofibrations.
Finally we check that it preserves finite complexes by an induction on the number of cells.
For the base case, we observe that if $N \leq L$ is any subgroup, $X \amalg (L/N \times D^n)$ is sent to $X \amalg (K/N \times D^n)$, and similarly with $S^{n-1}$ in the place of $D^n$. Therefore cells are sent to cells.
For the inductive step, we observe that each cell attaching diagram is sent to a cell attaching diagram, because by exactness the pushouts along cofibrations are preserved. Thus the pushforward of $G/L\to G/K$ gives an exact functor $q_!\colon R_{hf}^{L}(X) \ra R_{hf}^K(X)$.

\end{proof}

This establishes the first two conditions from \autoref{our conditions}. The remaining five conditions automatically descend from $R(X)^{hH}$ to any full subcategory with the same coproducts. Therefore the spectra $K(R^H_{hf}(X))$ form a spectral Mackey functor, so there exists a $G$-spectrum $\bA_G(X)$ whose fixed points are $\bA_G(X)^H\simeq K(R^H_{hf}(X))$. It has been long known that  the $K$-theory of the Waldhausen category $R^H_{hf}(X)$ has a splitting  $$ K(R^H_{hf}(X))\simeq \prod_{(H) \leq G} \bA(X^H_{hWH}).$$ A proof of this can be found in \cite{wojciech}. Therefore, we can conclude that the fixed points of the genuine $G$-spectrum $\bA_G(X)$ have a tom Dieck type splitting:

\[ \bA_G(X)^H \simeq \prod_{(H) \leq G} \bA(X^H_{hWH}), \]
This finishes the proof of \autoref{agx_exists2}.

  \bibliographystyle{amsalpha}
  \bibliography{references}

\begingroup%
\setlength{\parskip}{\storeparskip}

\end{document}